\definecolor{lightblue}{rgb}{0.22,0.45,0.70}
\definecolor{lightgreen}{rgb}{0.22,0.55,0.20}
\newcommand{\bu}{\boldsymbol{u}}
\newcommand{\bv}{\boldsymbol{v}}
\newcommand{\bnabla}{\boldsymbol{\nabla}}
\newcommand{\bvartheta}{\boldsymbol{\vartheta}}
\newcommand\bH{\mathbf{H}}
\newcommand\ff{\boldsymbol{f}}
\newcommand{\bV}{\mathbf{V}}
\newcommand\bx{\boldsymbol{x}}
\newcommand{\bdiv}{\operatorname*{\mathbf{div}}}
\renewcommand{\div}{\mathrm{div}} 
\newcommand{\Set}[2]{\left\{\,#1: #2\,\right\}}
	\def\command@factory#1{%
		\expandafter\def\csname v#1\endcsname{\boldsymbol{#1}}
	}
	\def\command@factory#1{%
		\expandafter\def\csname v#1\endcsname{\mathbf{#1}}
	}
\newcommand\Tend{t_{\mathrm{end}}}
\newcommand{\dt}{\ensuremath{\, \mathrm{d}t}}
\newcommand{\dz}{\ensuremath{\, \mathrm{d}z}}
\newcommand{\pdt}[1]{{\partial_t #1}}
\newcommand\Pol{\mathcal{P}}	
\newcommand{\Dt}{\Delta t}      
\newcommand{\cero}{\boldsymbol{0}}
\newcommand{\Sc}{\mathrm{Sc}}
\newcommand{\Thnorm}{1,\Th}
\newcommand{\IBDM}{\Pi_h\,}
\newcommand{\IL}{\mathcal{L}_h\,}
\newcommand{\IN}{\mathcal{I}_h\,}
\newcommand{\Iu}{\IBDM \vu}
\newcommand{\Ip}{\IL p}
\newcommand{\Ic}{\IN c}
\newcommand{\Is}{\IN s}
\newcommand{\eI}{E}  
\newcommand{\eP}{\xi} 
\newcommand{\eIu}{\eI_{\vu}}
\newcommand{\eIp}{\eI_p}
\newcommand{\eIc}{\eI_{c}}
\newcommand{\eIs}{\eI_{s}}
\newcommand{\ePu}{\eP_{\vu}}
\newcommand{\ePp}{\eP_p}
\newcommand{\ePc}{\eP_{c}}
\newcommand{\ePs}{\eP_{s}}
\newcommand{\vdiv}{\operatorname*{\mathbf{div}}}
\newcommand{\sdiv}{\operatorname*{div}}
\newcommand{\esssup}{\operatorname*{ess \, sup}}
\newcommand{\Th}{\mathcal{T}_h}		
\newcommand{\Eh}{\mathcal{E}_h}		
\newcommand{\dmean}[1]{{\left\{\kern-0.6ex\left\{ #1 
		\right\}\kern-0.6ex\right\}}}
\DeclarePairedDelimiter\djump{\llbracket}{\rrbracket}
\DeclarePairedDelimiter\norm{\lVert}{\rVert}
\DeclarePairedDelimiter\snorm{\lvert}{\rvert}
\DeclarePairedDelimiter\abs{\lvert}{\rvert}
\newcommand{\tnorm}[1]{{\left\vert\kern-0.25ex\left\vert\kern-0.25ex\left\vert #1 
		\right\vert\kern-0.25ex\right\vert\kern-0.25ex\right\vert}}
\newcommand{\Norm}{\norm{\cdot}}
\newtheorem{lemma}{Lemma}[section]
\newtheorem{theorem}{Theorem}[section]
\numberwithin{equation}{section}
\numberwithin{theorem}{section}
\numberwithin{table}{section}
\numberwithin{figure}{section}
\begin{document} 

\title[Error bounds for transient doubly-diffusive flows]{Divergence-conforming methods for transient doubly-diffusive flows: A priori and a posteriori error analysis} 

\author[R. B\"urger]{Raimund B\"urger}
\address{CI$^2$MA and Departamento de Ingenier\'\i a Matem\' atica, Universidad de Concepci\' on, 
Casilla 160-C, Concepci\' on, Chile}
\email{rburger@ing-mat.udec.cl}

\author[A. Khan]{Arbaz Khan}
\address{Department of Mathematics, Indian Institute of Technology Roorkee,  Roorkee 247667, India} 
\email{arbaz@ma.iitr.ac.in}

\author[P. E. M\'endez]{Paul E. M\'endez}
\address{CI$^2$MA and Departamento de Ingenier\'\i a Matem\' atica, Universidad de Concepci\' on, 
Casilla 160-C, Concepci\' on, Chile}
\email{pmendez@ing-mat.udec.cl}

\author[R. Ruiz-Baier]{Ricardo Ruiz-Baier}
\address{School of Mathematics, Monash University, 
  9~Rainforest Walk, Melbourne, VIC~3800, 
Australia; and Institute of Computer Science and Mathematical Modeling, Sechenov University, Moscow, Russian Federation}
\email{ricardo.ruizbaier@monash.edu}

\thanks{This work has been supported by ANID (Chile) through Fondecyt  project 1210610, CMM, project ANID/PIA/AFB170001, and CRHIAM, project ANID/FONDAP/15130015; by the Sponsored Research \& Industrial Consultancy (SRIC), Indian Institute of Technology Roorkee, India through the faculty initiation
grant MTD/FIG/100878; 
and by the Ministry of Science and Higher Education of the Russian Federation within the framework of state support for the creation and development of World-Class Research Centers ``Digital biodesign and personalized healthcare'' No. 075-15-2020-926.}

\subjclass[2010]{65M60; 76S05; 76R50}
\keywords{{\it A priori} and {\it a posteriori} error bounds; Mixed \textbf{H}(div)-conforming 
methods; Coupled Navier-Stokes and double-diffusion; Sedimentation and salinity variations.}

\date{\today}  

\begin{abstract}
The analysis of the double-diffusion model and $\mathbf{H}(\mathrm{div})$-conforming method introduced in [B\"urger, M\'endez, Ruiz-Baier, SINUM (2019), 57:1318--1343] is extended  to the time-dependent case. In addition,   the efficiency and reliability analysis of residual-based {\it a posteriori} error estimators for the steady, semi-discrete, and fully discrete problems is established. The  resulting methods are applied to simulate the sedimentation of small particles in salinity-driven flows. The method consists of Brezzi-Douglas-Marini approximations for velocity and compatible piecewise discontinuous pressures, whereas Lagrangian elements are used for concentration and salinity distribution. Numerical tests confirm the properties of the proposed family of schemes  and of the adaptive strategy guided by the {\it a posteriori} error indicators. 
\end{abstract}

\maketitle

\section{Introduction and problem formulation}
\subsection{Scope}
A number of physical problems of relevance in industrial applications 
involve coupled incompressible flow and double-diffusion transport. 
 We are interested in  numerical schemes 
  for the approximation of a class of coupled 
    equations  that arise as   models of sedimentation of small particles 
      under the effect of salinity of the ambient fluid. The governing model 
      can be stated as follows (cf., e.g., \cite{burns15,reali17}): 
      \begin{subequations}\label{eq:model}
      \begin{align} 
\partial_t \boldsymbol{u} + \boldsymbol{u} \cdot \boldsymbol{\nabla}  \boldsymbol{u} 
 & =  \vdiv \bigl(\nu(c) \boldsymbol{\nabla} \vu \bigr)  
  - \frac{1}{\rho_{\mathrm{m}}} \nabla p + \frac{\rho}{\rho_{\mathrm{m}}}  \boldsymbol{g}, \label{goveq1} \\ 
    \sdiv \boldsymbol{u} & =0, \label{goveq2} \\
    \partial_t s + \boldsymbol{u} \cdot \nabla s  & = \frac{1}{\Sc} \nabla^2 s, \label{goveq3} \\ 
     \partial_t  c + ( \boldsymbol{u} - v_{\mathrm{p}}   \boldsymbol{e}_z ) \cdot \nabla 
      c & = \frac{1}{\tau \Sc} \nabla^2 c,  \label{goveq4} 
 \end{align}
 \end{subequations}        
     posed on a spatial domain $\Omega \subset \mathbb{R}^d$, $d=2$ or~$d=3$, where $t\in (0,t_{\mathrm{end}}]$~is time,
      $\boldsymbol{u}$ is the fluid velocity, $\nu$~is the  concentration-dependent viscosity, 
       $\rho_{\mathrm{m}}$ is the mean density of the fluid, $p$~is the fluid pressure, 
        $\rho$~is density, $\boldsymbol{g}$ is the gravity acceleration, $s$~is the salinity concentration, $c$~is the concentration of solid particles, $\Sc = \nu_{\mathrm{ref}} / \kappa_{\mathrm{s}}$ is the Schmidt number, where~$\kappa_{\mathrm{s}}$ is the diffusivity of salinity, $\nu_{\mathrm{ref}}$ is a reference viscosity in the absence of solid particles, and $\tau = \kappa_{\mathrm{s}} / \kappa_{\mathrm{c}}$ is the inverse of the diffusivity ratio, 
         where $\kappa_{\mathrm{c}}$ is the diffusivity of solid particles,  and $\boldsymbol{e}_z$ is the upward-pointing unit vector.
        We relate the densities through a linearised equation of state
        \begin{align*}
        {\rho} &= {\rho_{\mathrm{m}}}(\alpha s + \beta c). 
        \end{align*}
        Again as in \cite{burns15,reali17}, the solid particles are assumed  mono-sized with radius~$r$, and 
         settle at dimensionless velocity $\smash{v_{\mathrm{p}} = v_{\mathrm{St}} / ( \nu_{\mathrm{ref}} g')^{1/3}}$, where 
         $\smash{v_{\mathrm{St}} =  2 r^2 ( \rho_{\mathrm{p}} - \rho_{\mathrm{m}})g/ (9 \rho_{\mathrm{m}} \nu_{\mathrm{ref}})}$ 
            is the Stokes velocity (settling velocity of a single particle in an unbounded fluid).  
The coupling mechanisms between flow and transport are only due to 
advection for concentration and salinity (where 
the advecting velocity for concentration, $\vu - v_\mathrm{p}\ve_z$, is also divergence-free), 
and through the concentration-dependent viscosity.       
  Further details are provided in later parts of the paper.

To put the paper further into the proper perspective, we mention that there exists 
an abundant body of literature devoted to constructing accurate finite element and 
related schemes for doubly-diffusive flows. Some recent contributions include 
variational multiscale stabilised schemes, least-squares methods, divergence-conforming mixed methods, volume-averaging discretisations, spectral elements, vorticity-based finite element formulations, and similar methods applied to, e.g., 
flows with heat and mass transport \cite{abedi03}, reactive Boussinesq flows \cite{agouzal03}, 
nonlinear advection-reaction-diffusion in the context of bioconvective flows \cite{anaya18,lenarda17}, cross-diffusion and boundary layer effects in doubly-diffusive Navier-Stokes-Brinkman equations \cite{buerger19,dallmann16} and in Darcy-Brinkman equations \cite{yang18}, or phase change models \cite{danaila14,woodfield19,zabaras04,zimmerman18}; 
where the list is far from exhaustive.

The solvability analysis for the continuous and discrete problems usually follows 
energy and fixed-point schemes as done for classical Boussinesq equations, and this 
is also the approach we follow here. The discretisation in space uses an interior penalty 
divergence-conforming 
method for the flow equations (in this case, Brezzi-Douglas-Marini  (BDM) elements of degree $k\geq 1$ for the velocity and discontinuous elements of degree $k-1$ for the pressure following 
\cite{arnold2001, konno2011}),  
combined with Lagrangian elements for the diffusive quantities, and the 
development stands as a natural extension of the formulation in \cite{buerger19} to 
the transient case. As such, it also features exactly divergence-free velocity approximations 
ensuring local conservativity and energy stability, and the error estimates of velocity 
are pressure-robust. The chosen time discretisation is the backward differentiation formula 
of degree 2 (BDF2), which for $k=2$ gives a method of order 2 in space and time.  
Existence of discrete solutions is established by the Brouwer fixed-point 
theory similarly as in \cite{buerger19}, and the error analysis in the semi-discrete and fully-discrete settings is adapted from the theory  of  \cite{aldbaissy2018} for the Boussinesq equations.

In many   applications  where  double-diffusion effects occur, complicated flow patterns exist in zones far from boundary layers and sufficiently refined meshes are needed essentially in the whole spatial domain. However, for salinity-driven settling of solid particles that result in mathematical models such as \eqref{eq:model}, many of the flow features are clustered near zones of high-gradients of concentration, which is where the typical \emph{plumes} are observed \cite{burns15,lenarda17}. This motivates the use of adaptive mesh refinement guided by {\it a posteriori} error indicators. For instance, in the context of phase change models there are some results based on error-related metric change \cite{danaila14,rako20} and on goal-oriented adaptivity \cite{zimmerman18}. 
Regarding the design and rigorous analysis of residual-based {\it a posteriori} error estimators for flow-transport couplings,  the literature is 
predominantly focused on the stationary case (see, e.g., \cite{agroum17,allali05,allendes20,alvarez18,becker02,dib19,wilfrid19,zhang11} and the references therein). Only a few results are available for the time-dependent regime, from 
which we mention the adaptive mixed method for Richards equation in porous media \cite{bernardi14}, the remeshing scheme based on goal-oriented adaptivity for solidification problems advanced in \cite{belhamadia19}, the collection of adaptive schemes for reactive flow discussed in \cite{braack07} and for heat transfer in \cite{larson08}. However, none of these theoretical frameworks is directly applicable to 
\eqref{eq:model}  using divergence-conforming approximations.  

The {\it a posteriori} error analysis we advance here is of residual type, and its analysis uses ideas from the abstract results 
related to spatial estimators for discontinuous Galerkin schemes applied to parabolic problems in 
\cite{georgoulis11}. The approach hinges on a decomposition of the discrete solution into a conforming and a non-conforming contribution,  along with a reconstruction technique (see also \cite{memon12}). This has also been exploited for the construction of {\it a posteriori} estimators of time-dependent Stokes and Navier-Stokes equations \cite{bansch19,zhang17}.  
Our {\it a posteriori} error analysis is divided into three parts. In first part, we present the  error estimator for the steady coupled problem. In second part, we extend the {\it a posteriori} error estimation to the semi-discrete method, and finally we present the {\it a posteriori} error estimator for the unsteady coupled problem. We restrict that part of the analysis to the simpler backward Euler method. To the best of our knowledge, the {\it a posteriori} error estimation advanced in this paper is the first comprehensive study targeted for transient doubly-diffusive flows.

The remainder of this paper is organised as 
 follows.  In what is left of this section we outline the weak formulation of \eqref{eq:model} 
 and state the stability the continuous problem. In Section~\ref{sec:fe}  we introduce 
 the divergence-conforming method in fully discrete form, show existence of discrete solutions using fixed-point arguments, 
 and rigorously establish {\it a priori} error estimates. Section~\ref{sec:aposteriori1} is devoted to the construction and analysis of efficiency and reliability for a residual-based {\it a posteriori} error estimator tailored for 
 the stationary problem. In turn, these upper and lower bounds are used to establish properties 
 of a second family of estimators for the transient case, and addressed in Sections~\ref{sec:aposteriori2} and \ref{sec:aposteriori3}. In Section~\ref{sec:numer} we collect 
 numerical tests that verify the theoretical convergence rates 
 predicted by the {\it a priori} 
 error analysis, confirm the robustness of the proposed {\it a posteriori} error estimators, 
 and illustrate the advantages of adaptive methods in the simulation of doubly-diffusive flows.  


\subsection{Preliminaries} \label{subsec:prel}   Let $\Omega$ be an open and bounded domain in $\mathbb{R}^{d}$, $d=2,3$ with Lipschitz boundary $\Gamma=\partial \Omega$. We denote by $L^p(\Omega)$ and  $W^{r,p}(\Omega)$ the usual Lebesgue and Sobolev spaces  with respective norms $\smash{\Norm_{L^p(\Omega)}}$ and $\smash{\Norm_{W^{r,p}(\Omega)}}$. If $p=2$ we write $H^r(\Omega)$ in place of $\smash{W^{r,p}(\Omega)}$, and denote the corresponding norm by $\smash{\Norm_{r,\Omega}}$, ($\smash{\Norm_{0,\Omega}}$ for $H^0(\Omega) = L^2(\Omega)$).  The space $L^2_0(\Omega)$ denotes the restriction of $L^2(\Omega)$
to functions with zero mean value over~$\Omega$. For $r\geq0$, we write the $H^r$-seminorm as 
$\smash{\snorm{\cdot}_{r,\Omega}}$ and we denote by $(\cdot,\cdot)_{\Omega}$ the usual inner product in $L^2(\Omega)$. Spaces of vector-valued functions  (in dimension $d$)  are denoted in bold face, i.e., $\smash{\vH^r(\Omega) = \left[ H^r(\Omega) \right]^{d}}$, and  we  use the vector-valued Hilbert spaces
	\begin{align*}
	\vH(\sdiv;\Omega) &\coloneqq \bigl\{\vw \in \vL^2(\Omega): \sdiv \vw \in L^2(\Omega) \bigr\}, \\
	\vH_0(\sdiv;\Omega) &\coloneqq \bigl\{\vw \in \vH(\sdiv;\Omega): \vw\cdot\vn_{\partial\Omega} = 0 \text{ on }\partial\Omega \bigr\}, \\
	\vH_0(\sdiv\!^0;\Omega) &\coloneqq \bigl\{\vw \in \vH_0(\sdiv;\Omega) : \sdiv \vw = 0 \text{ in }\Omega \bigr\},
	\end{align*}
where $\vn_{\partial\Omega}$ denotes the outward normal on $\partial\Omega$; and we endow these spaces with the norm~$\norm{\cdot}_{\sdiv,\Omega}$ defined by 
$\smash{\norm{\vw}_{\sdiv,\Omega}^2 \coloneqq \norm{\vw}_{0,\Omega}^2 + \norm{\sdiv \vw}_{0,\Omega}^2}$.
We denote by $L^s(0,\Tend;W^{m,p}(\Omega))$ the Banach space of all $L^s$-integrable functions from $[0,\Tend]$ into $W^{m,p}(\Omega)$, with norm 
\begin{align*} 
\norm{v}_{L^s(0,\Tend;W^{m,p}(\Omega))} = \begin{cases} \displaystyle
\left(\int_{0}^{\Tend} \norm{v(t)}^s_{W^{m,p}(\Omega)} \dt \right)^{1/s} & \text{if $1\leq s < \infty$,} \\
\esssup_{t \in [0,\Tend]} \norm{v(t)}_{W^{m,p}(\Omega)} & \text{if $s = \infty$.} 
\end{cases}
\end{align*}

\subsection{Additional assumptions and weak formulation} \label{subsec:weak}
 As in, e.g., \cite{dib19}, we  assume that viscosity is a Lipschitz 
continuous and uniformly bounded function of concentration, i.e., 
\begin{equation*}
	\bigl|\nu(c_1)-\nu(c_2) \bigr|  \leq L_{\nu}|c_1-c_2| \quad \text{and} \quad 
	 \nu_1 \leq \nu(c) \leq \nu_2,
\end{equation*}
for any $c,c_1,c_2\in \mathbb{R}$, and where $L_{\nu},\nu_1,\nu_2$ are positive constants. 

For simplicity of notation in presenting the analysis we will restrict the weak form 
to the case of homogeneous Dirichlet boundary conditions for velocity, concentration, and salinity. Testing each equation in problem \eqref{eq:model}  against suitable functions and integrating by parts whenever adequate, gives the following 
 weak formulation: For all $t\in(0,\Tend]$, find $(\vu,p,s,c) \in \vH^1_0(\Omega) \times L_0^2(\Omega) 
\times H^1_0(\Omega) \times H^1_0(\Omega)$ such that 
\begin{subequations}
\label{eq:main_var_mod}
 \begin{align}
  (\partial_t \vu, \vv)_{\Omega} + a_1(c;\vu,\vv) + c_1(\vu;\vu,\vv) + b(\vv,p) & = F(s,c,\vv) \quad \text{for all $\vv\in \vH_0^1(\Omega)$,} \\
	 b(\vu,q) & = 0 \quad\text{for all $q \in L_0^2(\Omega)$,} \\ 
	  (\partial_t s, \varphi)_\Omega + a_2(s,\varphi) + c_2(\vu;s,\varphi) & = 0  \quad   \text{for all $\varphi\in H_0^1(\Omega)$,} \label{eq:main_var_mod-3}\\
	  (\partial_t c, \psi)_\Omega +\frac{1}{\tau} a_2(c,\psi) + c_2(\vu-v_\mathrm{p}\ve_z;c,\psi) & = 0  \quad   \text{for all $\psi\in H_0^1(\Omega)$,} \label{eq:main_var_mod-4}
	\end{align}
	\end{subequations} 
where the bilinear and trilinear forms $a_1: H_0^1(\Omega)\times\vH_0^1(\Omega)\times\vH_0^1(\Omega)\to \mathbb{R}$, $a_2: H_0^1(\Omega)\times H_0^1(\Omega) \to \mathbb{R}$, $b: \vH_0^1(\Omega)\times L_0^2(\Omega)\to\mathbb{R}$, $c_1: \vH_0^1(\Omega)\times\vH_0^1(\Omega)\times\vH_0^1(\Omega)\to \mathbb{R}$, $c_2: \vH_0^1(\Omega)\times H_0^1(\Omega)\times H_0^1(\Omega)\to \mathbb{R}$, as well as the linear functional $F:\vH^1_0(\Omega)\to \mathbb{R}$, are defined as follows for all $\vu,\vv,\vw\in \vH_0^1(\Omega)$, $q\in L_0^2(\Omega)$, and $\varphi,\psi\in H_0^1(\Omega)$: 
	\begin{align*}
	&a_1(c;\vu,\vv) \coloneqq   \bigl(\nu(c)\bnabla \vu, \bnabla\vv \bigr)_{\Omega}, \quad 
	c_1(\vw;\vu,\vv) \coloneqq \bigl((\vw \cdot \bnabla) \vu, \vv \bigr)_{\Omega},  \quad 
	F(s,c,\vv) = (\alpha s + \beta c)( \vg,\vv)_\Omega, \\ 
	&b(\vv,q) \coloneqq  \frac{1}{\rho_\mathrm{m}}(q,\sdiv \vv)_{\Omega}, \quad 
	 a_2(\varphi,\psi) \coloneqq \frac{1}{\text{Sc}}(\nabla \varphi, \nabla \psi)_{\Omega},    
	\quad c_2(\vv; \varphi, \psi)\coloneqq \bigl((\vv \cdot \nabla) \varphi, \psi \bigr)_{\Omega},
	\end{align*}

\subsection{Stability of the continuous problem}\label{sec:wellp}

We begin with he following auxiliary result
\begin{lemma} \label{lemma:L4_decomp}
	For $d = 2$ the following inequality holds:  
	\begin{align*}
	\norm{\vv}^2_{\vL^4(\Omega)} \leq \sqrt{2} \norm{\vv}_{0,\Omega} \snorm{\vv}_{1,\Omega}.
	\end{align*}
\end{lemma}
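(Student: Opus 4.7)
The plan is to prove a classical Ladyzhenskaya-type inequality in two dimensions, first by establishing the corresponding scalar bound via the fundamental theorem of calculus and then reducing the vector case to it. Throughout, I will implicitly assume $\vv \in \vH^1_0(\Omega)$ (the setting in which the lemma is invoked in \eqref{eq:main_var_mod}), so that extending components by zero yields functions in $H^1(\mathbb{R}^2)$.

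First, I would prove the scalar version: for $v \in H^1_0(\Omega)$,
$\norm{v}_{L^4(\Omega)}^4 \leq \norm{v}_{0,\Omega}^2\,\snorm{v}_{1,\Omega}^2$. The key identity, obtained from the fundamental theorem of calculus applied to $v^2$ after extension by zero, is
\begin{equation*}
v^2(x_1,x_2) = 2\int_{-\infty}^{x_1} v\,\partial_1 v \,(t_1,x_2)\,dt_1 = -2\int_{x_1}^{\infty} v\,\partial_1 v\,(t_1,x_2)\,dt_1,
\end{equation*}
and averaging the two representations gives $v^2(x_1,x_2) \leq \int_{\mathbb{R}}\snorm{v\,\partial_1 v}(t_1,x_2)\,dt_1$; the analogous bound holds with the roles of $x_1$ and $x_2$ interchanged. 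Multiplying these two pointwise estimates, integrating over $\mathbb{R}^2$ (noting that the first factor depends only on $x_2$ and the second only on $x_1$, so Fubini separates the integrals), and applying Cauchy--Schwarz to each one-dimensional factor, I obtain
\begin{equation*}
\norm{v}_{L^4(\Omega)}^4 \leq \norm{v}_{0,\Omega}^2\,\norm{\partial_1 v}_{0,\Omega}\,\norm{\partial_2 v}_{0,\Omega} \leq \norm{v}_{0,\Omega}^2\,\snorm{v}_{1,\Omega}^2.
\end{equation*}

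Finally, I pass to the vector case. Writing $\vv = (v_1,v_2)$, the elementary inequality $(v_1^2 + v_2^2)^2 \leq 2(v_1^4 + v_2^4)$ gives
\begin{equation*}
\norm{\vv}_{\vL^4(\Omega)}^4 \leq 2\bigl(\norm{v_1}_{L^4(\Omega)}^4 + \norm{v_2}_{L^4(\Omega)}^4\bigr) \leq 2\bigl(\norm{v_1}_{0,\Omega}^2 + \norm{v_2}_{0,\Omega}^2\bigr)\snorm{\vv}_{1,\Omega}^2 = 2\,\norm{\vv}_{0,\Omega}^2\,\snorm{\vv}_{1,\Omega}^2,
\end{equation*}
after bounding $\snorm{v_i}_{1,\Omega} \leq \snorm{\vv}_{1,\Omega}$ componentwise. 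Taking square roots yields the stated estimate with constant $\sqrt{2}$.

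This argument is standard and I do not anticipate a serious obstacle; the only delicate point is the bookkeeping that produces the constant $\sqrt{2}$. This factor arises entirely from the componentwise reduction $(v_1^2+v_2^2)^2 \leq 2(v_1^4+v_2^4)$, whereas the scalar Ladyzhenskaya inequality itself yields the sharper constant $1$ (or even $1/\sqrt{2}$ after an AM--GM step on $\norm{\partial_1 v}\,\norm{\partial_2 v}$). For the purposes of the subsequent energy estimates the value $\sqrt{2}$ is perfectly adequate.
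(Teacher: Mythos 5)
Your proof is correct. Note, however, that the paper itself offers no proof of this lemma --- it is stated as a classical auxiliary fact (the two-dimensional Ladyzhenskaya inequality), so there is no argument to compare against; your write-up supplies the standard proof, and it is complete. Two remarks on the details: (i) you are right that the hypothesis $\vv\in\vH^1_0(\Omega)$ (or at least zero trace, permitting extension by zero) is essential and is only implicit in the paper's statement --- for a nonzero constant field the right-hand side vanishes while the left does not, so the seminorm version fails on all of $\vH^1(\Omega)$; and (ii) your bookkeeping of the constant is sound: the factor $\sqrt{2}$ comes from $(v_1^2+v_2^2)^2\le 2(v_1^4+v_2^4)$, and as you observe, keeping the sharper scalar bound $\norm{v}_{L^4(\Omega)}^4\le\tfrac12\norm{v}_{0,\Omega}^2\snorm{v}_{1,\Omega}^2$ would even improve the constant to $1$, which is more than the paper needs.
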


The variational forms defined above are continuous for all 
$\vu,\vv,\in \smash{\vH^1_0}(\Omega)$, $q\in L^2_0(\Omega)$, and $\varphi,\psi \in   \smash{H^1_0}(\Omega)$: 
\begin{subequations}
	\begin{align}
	\abs[\big]{a_1(\cdot,\vu,\vv)} &\leq C_a \norm{\vu}_{1,\Omega} \norm{\vv}_{1,\Omega}, \qquad 
	\abs[\big]{a_2(\varphi,\psi)} \leq \hat{C}_a \norm{\varphi}_{1,\Omega} \norm{\psi}_{1,\Omega}, \label{eq:cont_a2}\\
	\abs[\big]{b(\vv,q)} &\leq C_b \norm{\vv}_{1,\Omega} \norm{q}_{0,\Omega}, \\
	\abs[\big]{c_1(\vw;\vu,\vv)} &\leq C_c \norm{\vw}_{1,\Omega} \norm{\vu}_{1,\Omega} \norm{\vv}_{1,\Omega}, \qquad 
	\abs[\big]{c_2(\vu;\varphi,\psi)} \leq \hat{C}_c \norm{\vu}_{1,\Omega} \norm{\varphi}_{1,\Omega} \norm{\psi}_{1,\Omega}.
	\end{align}
\end{subequations}

We also recall (from \cite[Chapter I, Lemma 3.1]{Girault2005}, for instance) the following Poincar\'e-Friedrichs inequality:
\begin{align} \label{eq:poincare}
\norm{u}_{0,\Omega} \leq C_p \snorm{u}_{1,\Omega}\qquad \text{for all $u \in H_0^1(\Omega)$}.
\end{align}

Next, using \eqref{eq:poincare} readily gives the coercivity of~$a_2$ and also, for a fixed concentration, that of~$a_1$, i.e., 
\begin{subequations}
\begin{align}
a_1(\cdot,\vv,\vv) & \geq \alpha_a \norm{\vv}^2_{1,\Omega} \qquad \text{for all $\vv \in \vH^1_0({\Omega})$,}  \label{eq:coerciv_a}\\
a_2(\varphi,\varphi) & \geq \hat{\alpha}_a \norm{\varphi}^2_{1,\Omega} \qquad \text{for all $\varphi \in H_{0}^1({\Omega})$.}  \label{eq:coerciv_am}
\end{align}\end{subequations}

Using the definition and characterisation of the kernel of~$b(\cdot,\cdot)$ 
\begin{align*} 
\vX\coloneqq\bigl\{ \vv \in \vH^1_0(\Omega)\, : \, b(\vv,q)=0 \; \forall q \in L_0^2(\Omega) \bigr\} = \bigl\{ \vv \in \vH_0^1(\Omega)
 \, : \, \sdiv \vv = 0 \; \text{in $\Omega$} \bigr\},   
\end{align*}
and using integration by parts we can readily observe that 
\begin{align} 
\text{$c_1(\vw;\vv,\vv) = 0$  and  $c_2(\vw;\varphi,\varphi) = 0$ \qquad for all  
		$\vw \in \vX, \vv \in \vH^1(\Omega),  \varphi \in H^1(\Omega)$.} 
\label{eq:c_zero}
\end{align}
It is well known that 
the bilinear form $b(\cdot,\cdot)$ satisfies the inf-sup condition (see, e.g., \cite{temam77}):
\begin{align*}
\sup_{\vv \in \vH_0^1(\Omega) \backslash \{\cero\}} \frac{b(\vv,q)}{\norm{\vv}_{1,\Omega}} &\geq \beta \norm{q}_{0,\Omega} 
 \quad \text{for all $q \in L_0^2(\Omega)$}.   
\end{align*}
Finally, for $\vv \in \vW^{1,\infty}(\Omega)$ and $\varphi \in W^{1,\infty}(\Omega)$ 
there exists an embedding constant $C_{\infty} > 0 $ such that
\begin{equation*}
	\norm{\vv}_{1,\Omega} \leq C_{\infty} \norm{\vv}_{\vW^{1,\infty}(\Omega)} \quad \text{and} \quad 
	\norm{\varphi}_{1,\Omega} \leq C_{\infty} \norm{\varphi}_{W^{1,\infty}(\Omega)}.
\end{equation*}

\begin{lemma}[Stability]If $\vg \in L^2(t,\Tend;\vL^2(\Omega))$, $\vu_0 \in \vL^2(\Omega)$ and $s_0,c_0 \in L^2(\Omega)$, then, for any solution $\vu,s,c$ of \eqref{eq:main_var_mod} and for $t \in (0,\Tend]$, there exists a constant $\gamma>0$ such that
	\begin{align*}
	\norm{\vu}_{L^2(0,t;\vH^1(\Omega))} + \norm{s}_{L^2(0,t;H^1(\Omega))} + \norm{c}_{L^2(0,t;H^1(\Omega))} &\leq \gamma \bigl(\norm{\vu_0}_{0,\Omega} + \norm{s_0}_{0,\Omega} + \norm{c_0}_{0,\Omega} \bigr),
	\end{align*}
	where $\gamma$ depends on $\eta_1, \Sc, \rho,\rho_{\mathrm{m}},C_p$, $\norm{g}_{\infty,\Omega}$, $\alpha$ and $\beta$.
\end{lemma}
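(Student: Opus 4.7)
The plan is a standard energy estimate, exploiting the decoupling that arises because the transport equations for $s$ and $c$ are source-free and have divergence-free advecting velocities, so one can first bound $s$ and $c$ without reference to the flow and then close the estimate for $\vu$.

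First I would treat the scalar equations. Testing \eqref{eq:main_var_mod-3} against $\varphi = s$ and \eqref{eq:main_var_mod-4} against $\psi = c$, the convective trilinear forms vanish: since $\vu\in\vX$, one has $c_2(\vu;s,s)=c_2(\vu;c,c)=0$ by \eqref{eq:c_zero}; the extra drift contribution $c_2(v_\mathrm{p}\ve_z;c,c)$ also vanishes by the same integration-by-parts argument, because $\sdiv(v_\mathrm{p}\ve_z)=0$ in $\Omega$ and $c|_{\partial\Omega}=0$. Using $(\partial_t\varphi,\varphi)_\Omega=\tfrac{1}{2}\tfrac{\mathrm{d}}{\mathrm{d}t}\norm{\varphi}_{0,\Omega}^2$ together with the coercivity \eqref{eq:coerciv_am} and integrating in time gives
\begin{equation*}
\norm{s(t)}_{0,\Omega}^2 + 2\hat{\alpha}_a\norm{s}^2_{L^2(0,t;H^1(\Omega))} \leq \norm{s_0}_{0,\Omega}^2,\qquad
\norm{c(t)}_{0,\Omega}^2 + \tfrac{2\hat{\alpha}_a}{\tau}\norm{c}^2_{L^2(0,t;H^1(\Omega))} \leq \norm{c_0}_{0,\Omega}^2,
\end{equation*}
which already supplies the $s$- and $c$-contributions to the claimed bound, and in particular yields an $L^\infty(0,t;L^2(\Omega))$ bound on both scalars in terms of their initial data.

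Next I would test the momentum equation in \eqref{eq:main_var_mod} with $\vv=\vu$. Since $\vu\in\vX$, both the pressure coupling $b(\vu,p)$ and the trilinear term $c_1(\vu;\vu,\vu)$ vanish (the latter again by \eqref{eq:c_zero}). The coercivity \eqref{eq:coerciv_a} and the bound on $F$ then produce
\begin{equation*}
\tfrac{1}{2}\tfrac{\mathrm{d}}{\mathrm{d}t}\norm{\vu}_{0,\Omega}^2+\alpha_a\norm{\vu}_{1,\Omega}^2\leq\abs{F(s,c,\vu)}\leq\norm{\vg}_{\infty,\Omega}\bigl(\abs{\alpha}\norm{s}_{0,\Omega}+\abs{\beta}\norm{c}_{0,\Omega}\bigr)\norm{\vu}_{0,\Omega},
\end{equation*}
and Young's inequality together with Poincar\'e \eqref{eq:poincare} absorbs $\tfrac{\alpha_a}{2}\norm{\vu}_{1,\Omega}^2$ into the left-hand side, leaving a remainder proportional to $\norm{s}_{0,\Omega}^2+\norm{c}_{0,\Omega}^2$. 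Integrating in time and substituting the $L^\infty(0,t;L^2(\Omega))$ bounds obtained above then yields $\norm{\vu}^2_{L^2(0,t;\vH^1(\Omega))}\leq C\bigl(\norm{\vu_0}_{0,\Omega}^2+\norm{s_0}_{0,\Omega}^2+\norm{c_0}_{0,\Omega}^2\bigr)$; adding the three estimates and taking square roots completes the proof with $\gamma$ depending on $\nu_1,\Sc,\rho_\mathrm{m},C_p,\norm{\vg}_{\infty,\Omega},\alpha,\beta$ as stated.

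The point requiring most care is the cancellation of the convective terms, in particular the extension of \eqref{eq:c_zero} to the constant drift $v_\mathrm{p}\ve_z$, which does not lie in $\vX$ but still yields $c_2(v_\mathrm{p}\ve_z;c,c)=0$ thanks to the homogeneous boundary data for $c$. Without these cancellations one would have to invoke Gronwall's lemma on the coupled triple, producing a constant $\gamma$ that grows with $\Tend$; the divergence-free structure of both the Navier-Stokes velocity and the settling drift is precisely what delivers a uniform-in-time bound on $[0,\Tend]$ and allows the flow estimate to be closed directly from the bounds on $s$ and $c$.
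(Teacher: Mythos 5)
Your proposal is correct and follows essentially the same route as the paper's proof: test each equation with its own solution, use the cancellations \eqref{eq:c_zero} (including the observation that the constant drift $v_\mathrm{p}\ve_z$ still gives a vanishing convective term thanks to the homogeneous boundary data for $c$), apply coercivity and Young's inequality, and integrate in time, feeding the decoupled bounds on $s$ and $c$ into the velocity estimate. The only cosmetic difference is the order (you bound the scalars first, the paper bounds the velocity first and then substitutes), which does not affect the argument.
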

\begin{proof}
We can take $\vu$ on $\vX$ and due to the inf-sup condition we can solve an equivalent reduced problem where $b(\cdot,\cdot)$ is removed from the variational form \eqref{eq:main_var_mod}. Setting $\vv=\vu$  and using \eqref{eq:c_zero}, \eqref{eq:coerciv_a}, we have
	\begin{align*}
	\frac{1}{2} \frac{\mathrm{d}}{\mathrm{d} t}\norm{\vu}_{0,\Omega}^2 + \alpha_a \norm{\vu}_{1,\Omega}^2 \leq \norm{\vg}_{\infty,\Omega}C \bigl( \norm{s}_{0,\Omega} + \norm{c}_{0,\Omega} \bigr) \norm{\vu}_{0,\Omega}.
	\end{align*}
	Now we use Young's inequality with $\varepsilon = \alpha_a/4$ to get
	 	\begin{align*}
	 \frac{\mathrm{d}}{\mathrm{d} t}\norm{\vu}_{0,\Omega}^2 + \frac{\alpha_a}{2} \norm{\vu}_{1,\Omega}^2 \leq \frac{C \norm{g}_{\infty,\Omega}^2}{\alpha_a} \bigl(\norm{s}_{0,\Omega}^2 + \norm{c}_{0,\Omega}^2 \bigr).
	 \end{align*}
	Integrating this equation between $0$ and $t$ yields, in particular
	\begin{equation} \label{eq:res_stab_1}
	\norm{\vu(\cdot,t)}_{0,\Omega}^2 + \alpha_a \int_0^t \norm{\vu(\cdot,z)}_{1,\Omega}^2 \dz \leq \norm{\vu(\cdot,0)}_{0,\Omega} + \frac{C}{\alpha_a} \int_0^t \norm{c(\cdot,z)}_{0,\Omega}^2 \dz + \frac{C}{\alpha_a} \int_0^t \norm{s(\cdot,z)}_{0,\Omega}^2 \dz.
	\end{equation}
	Analogously, using \eqref{eq:coerciv_am} and \eqref{eq:c_zero} on \eqref{eq:main_var_mod-3} and \eqref{eq:main_var_mod-4}, after integrating between 0 and $t$ we find that
	\begin{align}
	\norm{s(\cdot,t)}_{0,\Omega}^2 + 2\hat{\alpha}_a \int_0^t \norm{s(\cdot,z)}_{1,\Omega}^2 \dz &\leq \norm{s(\cdot,0)}_{0,\Omega}^2, \label{eq:res_stab2} \\
	\norm{c(\cdot,t)}_{0,\Omega}^2 + 2\hat{\alpha}_a \int_0^t \norm{c(\cdot,z)}_{1,\Omega}^2 \dz &\leq \norm{c(\cdot,0)}_{0,\Omega}^2. \label{eq:res_stab3}
	\end{align}
	Finally, we derive the sought result from \eqref{eq:res_stab_1}, \eqref{eq:res_stab2} and \eqref{eq:res_stab3}.
\end{proof}

A problem similar to \eqref{eq:main_var_mod} has been studied in \cite{agroum2015}. Assuming that $F$ belongs to $L^2(0,\Tend;\vH^{-1}(\Omega))$, that the initial velocity $\vu_0$ belongs to $\vL^2(\Omega)$ and the initial data for the coupled species ($s,c$ in the context of our problem) belongs to $L^2(\Omega)$, the authors showed  existence of a  solution by using the Galerkin method and applying the Cauchy-Lipschitz theorem, and proved its  uniqueness in two dimensions. Such analysis can be applied to \eqref{eq:main_var_mod} by noting that  $F$ is a Lipschitz-continuous function, and assuming the initial data belongs to appropriate spaces. This is, however, not the focus of the paper.

\section{Finite element discretisation and {\it a priori} error bounds}\label{sec:fe}

We discretise  space by a family of regular partitions, denoted  $\Th$, of $\Omega \subset \mathbb{R}^d$ into simplices~$K$ 
(triangles in 2D or tetrahedra in 3D) of diameter~$h_K$. We   label by  $K^-$ and $K^+$ the elements adjacent to an edge, while 
$h_e$  stands for the maximum diameter of the edge. 
If~$\vv$ and~$w$ are 
smooth vector and scalar fields defined on~$\Th$,  then  ($\vv^\pm, w^\pm$)  denote the traces of~($\vv,w$) on~$e$ 
 that are  the extensions from the interior of~$K^+$ and~$K^-$, respectively. Let $\vn_e$ denote the outward unit normal vector to~$e$ on~$K$. 
The average $\dmean{\cdot}$ and jump $\djump{\cdot }$ operators are defined as 
\begin{align*}
\dmean{\vv} &\coloneqq (\vv^-+\vv^+)/2, \quad \dmean{w} \coloneqq  (w^-+w^+)/2, \quad 
\djump{\vv } \coloneqq  (\vv^- - \vv^+), \quad \djump{w } \coloneqq  (w^- - w^+),
\end{align*}
whereas for boundary jumps and averages we adopt the conventions   $\dmean{\vv} = \djump{\vv} = \vv$, 
and $\dmean{w} = \djump{w} = w$. In addition, 
$\nabla_h$ will  denote  the broken gradient operator.

For $k\geq 1$  and a mesh $\Th$ on $\Omega$, let us consider the discrete spaces (see e.g.~\cite{bdm85})
\begin{gather*}
\vV_h \coloneqq \Set{ \vv_h \in \vH(\sdiv; \Omega)}{\vv_h|_K \in [\Pol_{k}(K)]^{\mathrm{d}}\quad\forall K \in \Th}, \\
\mathcal{Q}_h \coloneqq \Set{ q_h \in L^2(\Omega)}{q_h|_K \in \Pol_{k-1}(K)\quad\forall K \in \Th}, \\
\mathcal{M}_h \coloneqq \Set{ s_h \in C(\bar{\Omega})}{l_h|_K \in \Pol_{k}(K) \quad\forall K \in \Th},\qquad 
\mathcal{M}_{h,0} \coloneqq \mathcal{M}_h \cap H^1_0(\Omega),
\end{gather*}
which, in particular, satisfy  $\sdiv\vV_h\subset \mathcal{Q}_h$ (cf. \cite{konno2011}). Here $\Pol_k(K)$ denotes the local space 
spanned by polynomials of degree up to  $k$ and $\vV_h$ is the space of divergence-conforming BDM  elements. We then 
state the following semi-discrete Galerkin formulation for
problem \eqref{eq:main_var_mod}: 
Find $(\vu_h, p_h, s_h,c_h) \in \vV_h\times \mathcal{Q}_h\times \mathcal{M}_h\times\mathcal{M}_h$ such that:
\begin{align}  \label{eq:discr_var_mod} 
\begin{split}  
(\pdt \vu_h, \vv_h)_{\Omega} + a^h_1(c_h;\vu_h,\vv_h) + c^h_1(\vu_h;\vu_h,\vv_h) - b(\vv_h,p_h) &= F(s_h, c_h, \vv_h) \quad \text{for all $\vv_h\in \vV_h$,} \\
b(\vu_h,q_h) &= 0 \quad\text{for all $q_h \in \mathcal{Q}_h$,}\\
(\pdt s_h, \varphi_h)_{\Omega} + a_2(s_h,\varphi_h) + c_2(\vu_h;s_h,\varphi_h) &= 0 \quad   \text{for all $\varphi_h\in \mathcal{M}_{h}$,}\\
(\pdt c_h, \psi_h)_{\Omega} + \frac{1}{\tau} a_2(c_h,\psi_h) + c_2(\vu_h-v_\mathrm{p}\ve_z;c_h,\psi_h) &= 0 \quad   \text{for all $\psi_h\in \mathcal{M}_{h}$.}
\end{split}
\end{align}
The discrete versions of the trilinear forms $a_1^h(\cdot;\cdot,\cdot)$ and $c_1^h(\cdot;\cdot,\cdot)$ are defined using a symmetric interior penalty and an upwind approach, respectively (see, e.g., \cite{arnold2001, konno2011}):
\begin{align}
a^h_1(c_h;\vu_h, \vv_h)  
& \coloneqq \int_{\Omega} \nu(c_h) \boldsymbol{\nabla}_h(\vu_h) : \boldsymbol{\nabla}_h(\vv_h)  +
\sum_{e\in\mathcal{E}_h} \int_e \biggl( -\dmean{\nu(c_h) \boldsymbol{\nabla}_h(\vu_h) \vn_e} \cdot \djump{\vv_h } \nonumber  \\&  \qquad - \dmean{\nu(c_h) \boldsymbol{\nabla}_h(\vv_h) \vn_e} \cdot \djump{\vu_h }
+ \frac{a_0}{h_e}\nu(c_h) \djump{\vu_h } \cdot \djump{\vv_h } \biggr), \label{a1hdef} \\
c_1^h (\vw_h; \vu_h, \vv_h) & \coloneqq \int_\Omega (\vw_h \cdot \nabla)\vu_h\cdot\vv_h + \sum_{K \in \Th}\int_{\partial K \backslash \Gamma} \boldsymbol{\hat{w}}_h^{\mathrm{up}}(\vu_h) \cdot \vv_h, \nonumber 
\end{align}
where the upwind flux is defined as $\smash{\boldsymbol{\hat{w}}_h^{\mathrm{up}}(\vu_h) \coloneqq \frac{1}{2}(\vw_h \cdot \vn_k - \abs{\vw_h \cdot\vn_K})(\vu^e_h - \vu_h})$, and $\vu_h^e$ is the trace of $\vu_h$ taken from within the exterior of $K$.

We partition  the interval $[0,\Tend]$ into $N$ subintervals $[t_{n-1},t_n]$ of length $\Dt$. We  use the implicit BDF2 scheme where 
all first-order time derivatives are approximated using the centred operator  
\begin{align} \label{centrop} 
\partial_t \vu_h (t^{n+1})\approx \frac{1}{\Dt}\biggl(\frac{3}{2}\vu_h^{n+1}-2\vu_h^n
+\frac{1}{2}\vu_h^{n-1}\biggr), \end{align} 
(similarly for $\partial_t c$) and for the first time step a first-order backward Euler method is used 
from $t^0$ to $t^1$, starting from the interpolates $\vu_h^0, s_h^0$ of the initial data.  
In what follows, we define the difference operator 
 \begin{align*} 
  \mathcal{D} y^{n+1}  \coloneqq  3 y^{n+1} - 4 y^n   + y^{n-1} 
  \end{align*} 
   for any quantity indexed by the time step~~$n$.  For instance,  \eqref{centrop} can be written as 
   $\partial_t \vu_h (t^{n+1})\approx  \frac{1}{2 \Delta t}  \mathcal{D} \boldsymbol{u}_h^{n+1}$.

 The resulting set of nonlinear equations is solved by   an iterative 
Newton-Raphson method with exact Jacobian.  
Hence the complete discrete system is given by 
\begin{align}   \label{eq:full_disc_scheme} 
\begin{split} 
 \frac{1}{3} \left(\mathcal{D}  \vu_h^{n+1},\vv_h \right)_{\Omega}    
&= \frac{2}{3}\Dt \bigl(- a^h_1(c_h^{n+1};\vu_h^{n+1},\vv_h) - c^h_1(\vu_h^{n+1};\vu_h^{n+1},\vv_h) - b(\vv_h,p_h^{n+1}) + F(s_h^{n+1},c_h^{n+1},\vv_h)\bigr) \\ & \quad \text{for all $\vv_h\in \vV_h$,}  \\
b(\vu_h^{n+1},q_h) & = 0 \quad\text{for all $q_h \in \mathcal{Q}_h$,}  \\
  \frac{1}{3} \left( \mathcal{D} s_h^{n+1},\varphi_h \right)_{\Omega} &  = \frac{2}{3}\Dt \bigl(-a_2(s_h^{n+1},\varphi_h) - c_2(\vu_h^{n+1};s_h,\varphi_h)\bigr) \quad   \text{for all $\varphi_h\in \mathcal{M}_{h}$,} \\
  \frac{1}{3} \left(\mathcal{D} c_h^{n+1},\psi_h \right)_{\Omega}  & = \frac{2}{3}\Dt \biggl(-\frac{1}{\tau} a_2(c_h^{n+1},\psi_h)-c_2(\vu_h^{n+1}-v_\mathrm{p}\ve_z;c_h^{n+1},\psi_h)\biggr) \quad   \text{for all $\psi_h\in \mathcal{M}_{h}$.}  
  \end{split} 
\end{align}


For the subsequent analysis, we introduce  for $r \geq 0$ the broken $\vH^r(\Th)$ space 
$$
\vH^r(\Th) = \Set{ \vv \in \vL^2(\Omega)}{\vv|_K \in \vH^r(K), K \in \Th},
$$
as well as the following mesh-dependent broken norms
\begin{gather*}
\norm{\vv}_{*,\Th}^2  \coloneqq \sum_{K\in \Th} \norm{\bnabla_h(\vv)}_{\vL^2(K)}^2 + \sum_{e\in\Eh} \frac{1}{h_e} \norm{ \djump{\vv } }_{\vL^2(e)}^2, \\ 
\norm{\vv}_{\Thnorm}^2   \coloneqq \norm{\vv}_{\vL^2(\Omega)}^2 + \norm{\vv}_{*,\Th}^2 \quad \text{for all $\vv \in \vH^1(\Th)$,}  \quad 
\norm{\vv}_{2,\Th}^2 \coloneqq \norm{\vv}_{\Thnorm}^2 + \sum_{K\in \Th} h_K^2 \snorm{\vv }_{H^2 (K)}^2 \quad \text{for all $\vv \in \vH^2(\Th)$,} 
\end{gather*}
where the stronger norm $\smash{\Norm_{2,\Th}}$ is used to show continuity. This norm is  equivalent to 
$\smash{\Norm_{\Thnorm}}$ on $\smash{\vH^1(\Th)}$ (see \cite{arnold2001}).
Finally, adapting the argument used in \cite[Proposition 4.5]{karakashian1998} we have the discrete Sobolev embedding:
for $r=2,4$ there exists a constant $C_{\textnormal{emb}}>0$ such that
\begin{align}
\norm{\vv}_{\vL^{r}(\Omega)} \leq C_{\textnormal{emb}} \norm{\vv}_{\Thnorm} \qquad \text{for all }\vv \in \vH^{1}(\Th).
\label{eq:discr_emb}
\end{align}
Using these norms, we can establish continuity of the 
trilinear and bilinear forms involved. The proof  follows from   \cite[Section 4]{arnold2001}. 
\begin{lemma} \label{lemma4:ab_bounds}
	The following properties hold:
	\begin{align*}
	\abs[\big]{a^h_1(\cdot,\vu,\vv)}  &\leq C \norm{\vu}_{2,\Th}\norm{\vv}_{\Thnorm}  &&\text{\em for all $\vu \in \vH^2(\Th)$, $\vv\in \vV_h$}, \\
	\abs[\big]{a^h_1(\cdot,\vu,\vv)}  &\leq \tilde{C}_a \norm{\vu}_{\Thnorm}\norm{\vv}_{\Thnorm}     &&\text{\em for all $\vu,\vv\in \vV_h$,}  \\
	\abs[\big]{b(\vv,q)}  &\leq \tilde{C}_b \norm{\vv}_{\Thnorm}\norm{q}_{0,\Omega}   &&\text{\em for all $ \vv \in \vH^1(\Th)$, $q\in L^2(\Omega)$,}  
	\end{align*} 
	and for all $ \vu, \vv, \vw  \in \smash{\vH^1(\Th)}$	and  $\smash{\varphi, \psi \in H^1(\Omega)}$, there holds 
	\begin{subequations}
		\begin{align}
		\abs[\big]{c_2(\vw; \varphi, \psi)}  &\leq \tilde{C} \norm{\vw}_{\Thnorm}\norm{\varphi}_{1,\Omega} \norm{\psi}_{1,\Omega}.  \label{eq:bound_Cm0}
		\end{align}
	\end{subequations}
\end{lemma}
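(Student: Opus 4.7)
The plan is to establish the four estimates separately, relying on three standard DG tools: the discrete (inverse) trace inequality $\norm{w}_{0,e}\leq C h_e^{-1/2}\norm{w}_{0,K}$ valid on polynomial spaces, the continuous trace inequality $\norm{w}_{0,e}^2\leq C(h_K^{-1}\norm{w}_{0,K}^2+h_K\snorm{w}_{1,K}^2)$ for general $H^1$ functions, and the discrete embedding \eqref{eq:discr_emb} coupled with the continuous embedding $H^1(\Omega)\hookrightarrow L^4(\Omega)$ in dimensions two and three.

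The bound on $b$ is essentially immediate: Cauchy-Schwarz and the pointwise estimate $\abs{\sdiv\vv}\leq\sqrt{d}\,\abs{\nabla_h\vv}$ (applied elementwise) give $\abs{b(\vv,q)}\leq\rho_\mathrm{m}^{-1}\norm{\nabla_h\vv}_{0,\Omega}\norm{q}_{0,\Omega}\leq\tilde{C}_b\norm{\vv}_{\Thnorm}\norm{q}_{0,\Omega}$. For $c_2$, I would apply the generalised H\"older inequality with exponents $(4,2,4)$ to obtain $\abs{c_2(\vw;\varphi,\psi)}\leq\norm{\vw}_{\vL^4(\Omega)}\norm{\nabla\varphi}_{0,\Omega}\norm{\psi}_{L^4(\Omega)}$; invoking \eqref{eq:discr_emb} on $\vw$ and the continuous Sobolev embedding on $\psi$ then delivers \eqref{eq:bound_Cm0}.

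For the continuity of $a_1^h$ on $\vV_h\times\vV_h$, I would split the expression \eqref{a1hdef} into its four pieces. The volume integral is controlled by $\nu_2\norm{\nabla_h\vu}_{0,\Omega}\norm{\nabla_h\vv}_{0,\Omega}$ via Cauchy-Schwarz and the uniform upper bound on $\nu$. For the consistency and symmetry edge integrals I would apply Cauchy-Schwarz edge-by-edge, then use the discrete trace inequality to trade $\norm{\dmean{\nabla_h\vw}}_{0,e}$ for $h_e^{-1/2}\norm{\nabla_h\vw}_{0,K^\pm}$, and finally absorb the factor $h_e^{-1/2}\norm{\djump{\cdot}}_{0,e}$ into the jump part of $\norm{\cdot}_{*,\Th}$ after a Cauchy-Schwarz over the edge set. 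The penalty contribution is bounded directly by the very definition of $\norm{\cdot}_{*,\Th}$.

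For the mixed case $\vu\in\vH^2(\Th)$, $\vv\in\vV_h$, the discrete trace inequality is no longer available for $\vu$; I would replace it by the continuous trace inequality applied to $\nabla_h\vu$, which yields $h_e\norm{\dmean{\nabla_h\vu}}_{0,e}^2\leq C(\norm{\nabla\vu}_{0,K^\pm}^2+h_{K^\pm}^2\snorm{\vu}_{H^2(K^\pm)}^2)$ and, summed over edges, reproduces exactly $\norm{\vu}_{2,\Th}^2$. The symmetry term $\int_e\dmean{\nu(c_h)\nabla_h\vv\,\vn_e}\cdot\djump{\vu}$ and the penalty contribution are handled by pairing the discrete trace inequality on the polynomial $\vv$ with the $h_e^{-1/2}$-weighted jump of $\vu$, both controlled by $\norm{\vu}_{\Thnorm}\leq\norm{\vu}_{2,\Th}$. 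The main technical obstacle is precisely this bookkeeping: the continuous and discrete trace inequalities must be paired with the correct $h$-scalings so that the extra $h_K\snorm{\vu}_{H^2(K)}$ contribution is captured exactly by $\norm{\cdot}_{2,\Th}$, while the right-hand side keeps a linear dependence on $\norm{\vv}_{\Thnorm}$.
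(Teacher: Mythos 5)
Your proposal is correct and follows essentially the same route as the paper, which simply defers to the standard continuity arguments of \cite[Section 4]{arnold2001}: elementwise Cauchy--Schwarz for the volume and penalty terms, the inverse trace inequality for polynomial averages paired with the $h_e^{-1/2}$-weighted jumps, the scaled continuous trace inequality (producing the $h_K^2\snorm{\vu}_{H^2(K)}^2$ contribution of $\Norm_{2,\Th}$) when $\vu$ is only in $\vH^2(\Th)$, and H\"older plus the embeddings \eqref{eq:discr_emb} and $H^1(\Omega)\hookrightarrow L^4(\Omega)$ for $c_2$. No gaps.
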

Moreover, for   $c_1,c_2 \in H^1(\Omega)$, $\vu \in \vC^1(\Th) \cap \vH_0^1(\Omega)$ and $\vv\in \vV_h$, there 
holds 
\begin{align}
& \abs[\big]{a^h_1(c_1;\vu,\vv) - a^h_1(c_2;\vu,\vv)}  \leq \tilde{C}_{\mathrm{Lip}} \norm{c_1 - c_2}_{1,\Omega}\norm{\vu}_{\vW^{1,\infty}(\Th)}\norm{\vv}_{1,\Th},  \label{eq:a1lipschitz}
\end{align}
where the constant $\smash{\tilde{C}_{\mathrm{Lip}} >0}$ is independent of~$h$ (cf.\ \cite{buerger19}).
Note that while the coercivity of the form $a_2(\cdot,\cdot)$ in the discrete setting is readily implied by \eqref{eq:coerciv_am},  
there also holds (cf. \cite[Lemma 3.2]{konno2011})
\begin{equation}
a^h_1(\cdot,\vv,\vv) \geq \tilde{\alpha}_a \norm{\vv}_{\Thnorm}^2 \quad \text{for all $\vv\in \vV_h$,} 
\label{eq:coerciv_ah}
\end{equation}
provided that the stabilisation parameter $a_0>0$ in \eqref{a1hdef} is sufficiently large and independent of the mesh size. 

Let $\vw \in \vH_0(\sdiv^0;\Omega)$, then due to the skew-symmetric form of the operators~$c_1^h$ and~$c_2$, and the positivity of the non-linear upwind term of~$c_1^h$, we can write
\begin{subequations}\begin{align}
c^h_1(\vw;\vu,\vu)  &\geq 0 \quad \text{for all $\vu\in \vV_h$,} \label{eq:chgeq0} \\
c_{2}^h(\vw;\psi_h,\psi_h) &= 0 \quad \text{for all $\psi_h \in \mathcal{M}_{h}$,}
\label{eq:cTSeq0}
\end{align}\end{subequations}
as well as the following relation (which is based on \eqref{eq:discr_emb} and follows by the same method as in \cite{karakashian1998}): For any $\vw_1,\vw_2, \vu \in \vH^2(\Th)$ there holds for all $\vv \in \vV_h$
\begin{align} \label{eq:bound_ch1}
\abs[\big]{c_1^h(\vw_1;\vu,\vv)} - \abs[\big]{c_1^h(\vw_2;\vu,\vv)} &\leq \tilde{C}_c \norm{\vw_1-\vw_2}_{\Thnorm} \norm{\vv}_{\Thnorm} \norm{\vu}_{\Thnorm}.
\end{align} 
We also have
\begin{equation*}
F(\psi, \phi, \vv) \leq C_f \bigl(\norm{\psi}_{0,\Omega} + \norm{\phi}_{0,\Omega} \bigr)\norm{\vv}_{0,\Omega} \quad \text{for all } \vv \in \vV_h.
\end{equation*}
Finally, we recall from \cite{konno2011} the following discrete inf-sup condition for $b(\cdot,\cdot)$, where $\tilde{\beta}$ is independent of~$h$: 
\begin{equation}
\sup_{\vv_h\in \vV_h\backslash\{\cero\}} \frac{b(\vv_h,q_h)}{\norm{ \vv_h}_{1,\Th}} \geq 
\tilde{\beta} \norm{q_h}_{0,\Omega} \quad \text{for all $q_h \in \mathcal{Q}_h$.}  
\label{eq:infsup_disc}
\end{equation}


We will use the following algebraic relation: for any real numbers $a^{n+1}$, $a^n$, $a^{n-1}$ and 
defining $\Lambda a^n \coloneqq  a^{n+1} - 2a^n + a^{n-1}$, we have
\begin{align} \label{eq:alg_id} 
2(3a^{n+1} - 4a^n + a^{n-1}, a^n) &= \abs{a^{n+1}}^2 + \abs{2a^{n+1}-a^n}^2 + \abs{\Lambda a^n}^2 
  - \abs{a^{n}}^2 - \abs{2a^n-a^{n-1}}^2. 
\end{align}

\begin{theorem}
	Let $\smash{(\vu_h^{n+1}, p_h^{n+1},s_h^{n+1}, c_h^{n+1}) \in \vV_h \times \mathcal{Q}_h \times \mathcal{M}_{h,0} \times \mathcal{M}_{h,0}}$ be a solution of problem \eqref{eq:full_disc_scheme}. Then the following bounds are satisfied, where $C_1, C_2$ and $C_3$ are constants that are independent of $h$ and $\Dt$:
	\begin{align} \label{eq:utcs_bounds}
	\begin{split} 
	&\norm{\vu_h^{n+1}}^2_{0,\Omega} + \norm{2\vu^{n+1}_h - \vu_h^n}^2_{0,\Omega} +  \sum_{j=1}^{n} \norm{\Lambda \vu_h^j}^2_{0,\Omega} + \sum_{j=1}^{n} \Dt \norm{\vu^{j+1}_h}^2_{\Thnorm} \\ 
	& \quad \leq C_1 \bigl( \norm{s_h^{1}}^2_{0,\Omega} + \norm{2s_h^{1} - s_h^{0}}^2_{0,\Omega}+\norm{c_h^{1}}^2_{0,\Omega} + \norm{2c_h^{1} - c_h^{0}}^2_{0,\Omega}+ \norm{\vu_h^1}^2_{0,\Omega} + \norm{2\vu_h^1 - \vu_h^0}^2_{0,\Omega}  \bigr), \\
	&\norm{s_h^{n+1}}^2_{0,\Omega} + \norm{2s^{n+1}_h - s_h^{n}}^2_{0,\Omega} + \sum_{j=1}^{n} \norm{\Lambda s_h^{j}}^2_{0,\Omega} + \sum_{j=1}^{n} \Dt \norm{s^{j+1}_h}^2_{1,\Omega}\leq C_2 \bigl( \norm{s_h^{1}}^2_{0,\Omega} + \norm{2s_h^{1} - s_h^{0}}^2_{0,\Omega} \bigr), \\
	&\norm{c_h^{n+1}}^2_{0,\Omega} + \norm{2c^{n+1}_h - c_h^{n}}^2_{0,\Omega} + \sum_{j=1}^{n} \norm{\Lambda c_h^{j}}^2_{0,\Omega} + \sum_{j=1}^{n} \Dt \norm{c^{j+1}_h}^2_{1,\Omega} \leq C_3 \bigl( \norm{c_h^{1}}^2_{0,\Omega} + \norm{2c_h^{1} - c_h^{0}}^2_{0,\Omega} \bigr).
	\end{split}\end{align}
\end{theorem}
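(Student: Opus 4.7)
The plan is to establish each of the three bounds in \eqref{eq:utcs_bounds} by testing the scheme \eqref{eq:full_disc_scheme} with the unknown itself at time level $n+1$, invoking the BDF2 algebraic identity \eqref{eq:alg_id}, and telescoping the resulting sums from $j=1$ to $n$. I would prove the bounds in the order $s_h$, $c_h$, $\vu_h$, since the scalar transport equations decouple from the momentum equation at the energy level: this is what allows a direct argument rather than a discrete Gronwall loop.

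First I would take the third equation of \eqref{eq:full_disc_scheme} with $\varphi_h=s_h^{n+1}$. The continuity equation $b(\vu_h^{n+1},q_h)=0$ for all $q_h\in\mathcal{Q}_h$, together with $\sdiv\vV_h\subset\mathcal{Q}_h$, places $\vu_h^{n+1}\in\vH_0(\sdiv\!^0;\Omega)$, so the convective term vanishes by \eqref{eq:cTSeq0}. Combining this with the coercivity \eqref{eq:coerciv_am} of $a_2$ and applying \eqref{eq:alg_id} (paired with $s_h^{n+1}$) to the discrete time derivative, and after rescaling by $6$, I obtain
\begin{align*}
E_s^{n+1}-E_s^n+\norm{\Lambda s_h^n}_{0,\Omega}^2+4\Dt\,\hat{\alpha}_a\norm{s_h^{n+1}}_{1,\Omega}^2\le 0,\qquad E_s^n\coloneqq\norm{s_h^n}_{0,\Omega}^2+\norm{2s_h^n-s_h^{n-1}}_{0,\Omega}^2.
\end{align*}
Summing over $j=1,\dots,n$ telescopes the first two terms and yields the second bound of \eqref{eq:utcs_bounds}. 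Repeating the argument verbatim for the fourth equation of \eqref{eq:full_disc_scheme}, noting that $\vu_h^{n+1}-v_\mathrm{p}\ve_z$ is also discretely divergence-free (since $\ve_z$ is constant), produces the bound for $c_h$.

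For the velocity I would test the momentum equation with $\vv_h=\vu_h^{n+1}$. Three contributions cooperate: the pressure term $b(\vu_h^{n+1},p_h^{n+1})$ vanishes by the discrete mass equation, $c_1^h(\vu_h^{n+1};\vu_h^{n+1},\vu_h^{n+1})\ge0$ by \eqref{eq:chgeq0}, and $a_1^h(c_h^{n+1};\vu_h^{n+1},\vu_h^{n+1})\ge\tilde{\alpha}_a\norm{\vu_h^{n+1}}_{\Thnorm}^2$ by \eqref{eq:coerciv_ah}. The coupling forcing is controlled by $|F(s_h^{n+1},c_h^{n+1},\vu_h^{n+1})|\le C_f(\norm{s_h^{n+1}}_{0,\Omega}+\norm{c_h^{n+1}}_{0,\Omega})\norm{\vu_h^{n+1}}_{0,\Omega}$, and Young's inequality combined with a discrete Poincar\'e-type bound $\norm{\vu_h^{n+1}}_{0,\Omega}\le C\norm{\vu_h^{n+1}}_{\Thnorm}$ (valid on $\vV_h$ via the jump-augmented norm) absorbs half of the dissipation. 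Applying \eqref{eq:alg_id} once more and telescoping the resulting inequality from $j=1$ to $n$ leaves
\begin{align*}
E_{\vu}^{n+1}+\sum_{j=1}^{n}\norm{\Lambda\vu_h^j}_{0,\Omega}^2+\Dt\,\tilde{\alpha}_a\sum_{j=1}^{n}\norm{\vu_h^{j+1}}_{\Thnorm}^2\le E_{\vu}^1+C\sum_{j=1}^{n}\Dt\bigl(\norm{s_h^{j+1}}_{0,\Omega}^2+\norm{c_h^{j+1}}_{0,\Omega}^2\bigr),
\end{align*}
where $E_{\vu}^n\coloneqq\norm{\vu_h^n}_{0,\Omega}^2+\norm{2\vu_h^n-\vu_h^{n-1}}_{0,\Omega}^2$. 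Inserting the uniform-in-$j$ bounds on $\norm{s_h^{j+1}}_{0,\Omega}$ and $\norm{c_h^{j+1}}_{0,\Omega}$ obtained in the previous step turns the last sum into $C\Tend(\norm{s_h^{1}}_{0,\Omega}^2+\norm{2s_h^{1}-s_h^{0}}_{0,\Omega}^2+\norm{c_h^{1}}_{0,\Omega}^2+\norm{2c_h^{1}-c_h^{0}}_{0,\Omega}^2)$, delivering the first bound of \eqref{eq:utcs_bounds}.

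The main obstacle is the careful bookkeeping around the BDF2 algebraic identity: it must be applied with $\mathcal{D}a^{n+1}$ paired with $a^{n+1}$ in order to expose the nonnegative telescoping structure $E^{n+1}-E^n+\norm{\Lambda a^n}_{0,\Omega}^2$, after which the argument reduces to standard energy estimates. A secondary subtlety is that Young's inequality applied to the coupling functional $F$ must be sharp enough to preserve a strictly positive multiple of $\tilde{\alpha}_a\norm{\vu_h^{n+1}}_{\Thnorm}^2$ on the left, while the remainder is expressed only in terms of $\norm{s_h^{n+1}}_{0,\Omega}^2$ and $\norm{c_h^{n+1}}_{0,\Omega}^2$ whose control was secured independently. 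The decoupled order of proof ($s_h,c_h$ first, then $\vu_h$) is what prevents the argument from closing into a circular Gronwall loop and is also the reason the constants $C_1,C_2,C_3$ can be taken independent of $h$ and $\Dt$.
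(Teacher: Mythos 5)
Your proposal is correct and follows essentially the same route as the paper: test each equation with (a multiple of) the unknown at level $n+1$, use the BDF2 identity \eqref{eq:alg_id}, the coercivity/positivity properties \eqref{eq:coerciv_ah}, \eqref{eq:coerciv_am}, \eqref{eq:chgeq0}, \eqref{eq:cTSeq0} and the bound on $F$ with Young's inequality, telescope, and finally substitute the $s_h$ and $c_h$ bounds into the velocity estimate. The only difference is cosmetic ordering — the paper derives the velocity inequality first with the scalar sums left on the right-hand side and closes it at the end, whereas you prove the decoupled scalar bounds first — and your remark about pairing $\mathcal{D}a^{n+1}$ with $a^{n+1}$ in \eqref{eq:alg_id} is exactly the right reading of that identity.
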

\begin{proof}
	First we take $\vv_h = 4\vu_h^{n+1}$ and $q_h = 4p_h^{n+1}$ in the first and second equation of \eqref{eq:full_disc_scheme}, respectively and apply \eqref{eq:alg_id}, \eqref{eq:coerciv_ah} and \eqref{eq:chgeq0} to deduce the estimate
	\begin{align*}
	&		\norm{\vu_h^{n+1}}^2_{0,\Omega} + \norm{2\vu^{n+1}_h - \vu_h^n}^2_{0,\Omega} + \norm{\Lambda \vu_h^n}^2_{0,\Omega} + 4 \Dt \tilde{\alpha}_a \norm{\vu^{n+1}_h}^2_{\Thnorm} 
	\\ & \quad 
	\leq 4 \Dt C_f (\norm{s_h^{n+1}}_{0,\Omega}+\norm{c_h^{n+1}}_{0,\Omega}) \norm{\vu_h^{n+1}}_{0,\Omega} + \norm{\vu_h^n}^2_{0,\Omega} + \norm{2\vu_h^n - \vu_h^{n-1}}^2_{0,\Omega}.
	\end{align*}
	Using Young's inequality with $\varepsilon = \tilde{\alpha}_a/2$ and summing over $n$ we can assert that
	\begin{align}  \label{eq:pr_3} \begin{split} 
	&\norm{\vu_h^{n+1}}^2_{0,\Omega} + \norm{2\vu^{n+1}_h - \vu_h^n}^2_{0,\Omega} + \sum_{j=1}^{n} \norm{\Lambda \vu_h^j}^2_{0,\Omega} + 2 \tilde{\alpha}_a \sum_{j=1}^{n} \Dt \norm{\vu^{j+1}_h}^2_{\Thnorm}  \\ &\quad \leq  \frac{C}{\tilde{\alpha}_a} \sum_{j=1}^n \Dt\norm{s_h^j}_{0,\Omega}^2  + \frac{C}{\tilde{\alpha}_a} \sum_{j=1}^n \Dt\norm{c_h^j}_{0,\Omega}^2 + \norm{\vu_h^1}^2_{0,\Omega} + \norm{2\vu_h^1 - \vu_h^{0}}^2_{0,\Omega}.  \end{split} 
	\end{align}
	
	Similarly in the third equation of \eqref{eq:full_disc_scheme}, we take $\smash{\varphi_h = 4s_h^{n+1}}$ and use property \eqref{eq:cTSeq0} and relation \eqref{eq:alg_id} to deduce the inequality 
	\begin{align*}
	& \norm{s_h^{n+1}}^2_{0,\Omega} + \norm{2s^{n+1}_h - s_h^{n}}^2_{0,\Omega} + \norm{\Lambda s_h^{n}}^2_{0,\Omega} + 4 \alpha_2 \Dt \norm{s^{n+1}_h}^2_{1,\Omega}   \leq  \norm{s_h^{n}}^2_{0,\Omega} + \norm{2s_h^{n} - s_h^{n-1}}^2_{0,\Omega}.
	\end{align*}
	Hence, summing over $n$, we get
	\begin{align*} \begin{split} 
	&	\norm{s_h^{n+1}}^2_{0,\Omega} + \norm{2s^{n+1}_h - s_h^{n}}^2_{0,\Omega} + \sum_{j=1}^{n} \norm{\Lambda s_h^{j}}^2_{0,\Omega} + 4\hat{\alpha}_a\sum_{j=1}^{n} \Dt \norm{s^{j+1}_h}^2_{1,\Omega}  \leq \norm{s_h^{1}}^2_{0,\Omega} + \norm{2s_h^{1} - s_h^{0}}^2_{0,\Omega}.
	\end{split} 
	\end{align*}
	We proceed in the same way taking $\smash{\psi_h = 4 c_h^{n+1}}$ in the fourth equation of \eqref{eq:full_disc_scheme}, to get the third result. We get the first result by substituting bounds for $c_h$ and $s_h$ into  \eqref{eq:pr_3}.
\end{proof}

\begin{theorem}[Existence of discrete solutions] \label{th2.2} 
	The 	problem 	\eqref{eq:full_disc_scheme} admits  at least one solution 
	\begin{align*} 
	(\vu_h^{n+1}, p_h^{n+1}, s_h^{n+1}, c_h^{n+1}) \in \vV_h\times \mathcal{Q}_h\times \mathcal{M}_{h,0} \times \mathcal{M}_{h,0}. \end{align*} 
\end{theorem}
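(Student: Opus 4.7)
The plan is to apply a Brouwer-type existence result at each time step $n+1$, following the strategy of~\cite{buerger19}, assuming the previous iterates $(\vu_h^n,s_h^n,c_h^n)$ and $(\vu_h^{n-1},s_h^{n-1},c_h^{n-1})$ are known (the very first step is handled analogously, and more simply, through the backward Euler variant). Thanks to the discrete inf-sup condition~\eqref{eq:infsup_disc} I will first reduce the problem to the divergence-free subspace $\vV_h^0 \coloneqq \{\vv_h \in \vV_h : b(\vv_h,q_h) = 0 \;\forall q_h \in \mathcal{Q}_h\}$ and recover $p_h^{n+1}$ afterwards from the momentum equation. Since $\sdiv \vV_h \subset \mathcal{Q}_h$, one has $\vV_h^0 \subset \vH_0(\sdiv\!^0;\Omega)$, so that the key positivity and skew-symmetry identities~\eqref{eq:chgeq0}--\eqref{eq:cTSeq0} for the convective forms are available when the trial itself is used as a test function.

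On the finite-dimensional Hilbert space $W_h \coloneqq \vV_h^0 \times \mathcal{M}_{h,0} \times \mathcal{M}_{h,0}$, endowed with the product $L^2$ inner product $(\cdot,\cdot)_W$, I define the continuous operator $\Phi : W_h \to W_h$ via
\begin{align*}
\bigl(\Phi(\vu_h,s_h,c_h),(\vv_h,\varphi_h,\psi_h)\bigr)_W
& \coloneqq \tfrac13(\mathcal{D}\vu_h,\vv_h)_\Omega + \tfrac{2}{3}\Dt \bigl[a_1^h(c_h;\vu_h,\vv_h) + c_1^h(\vu_h;\vu_h,\vv_h) - F(s_h,c_h,\vv_h)\bigr] \\
& \quad + \tfrac13(\mathcal{D}s_h,\varphi_h)_\Omega + \tfrac{2}{3}\Dt \bigl[a_2(s_h,\varphi_h) + c_2(\vu_h;s_h,\varphi_h)\bigr] \\
& \quad + \tfrac13(\mathcal{D}c_h,\psi_h)_\Omega + \tfrac{2}{3}\Dt \bigl[\tfrac{1}{\tau}a_2(c_h,\psi_h) + c_2(\vu_h - v_{\mathrm{p}}\ve_z;c_h,\psi_h)\bigr],
\end{align*}
where $\mathcal{D}\vu_h = 3\vu_h - 4\vu_h^n + \vu_h^{n-1}$ and similarly for $s_h,c_h$. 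Continuity on the finite-dimensional $W_h$ is immediate from Lemma~\ref{lemma4:ab_bounds} and from~\eqref{eq:a1lipschitz}. Any zero $X^\star = (\vu_h^{n+1},s_h^{n+1},c_h^{n+1})$ of $\Phi$ produces a triple that solves the reduced version of~\eqref{eq:full_disc_scheme} against all test functions in the kernel; the inf-sup condition~\eqref{eq:infsup_disc} then yields a unique $p_h^{n+1} \in \mathcal{Q}_h$ that extends the identity to all $\vv_h \in \vV_h$, completing the discrete tuple.

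Existence of such a zero will follow from the standard corollary to Brouwer's theorem: it suffices to exhibit $R>0$ such that $(\Phi(X),X)_W \geq 0$ on the sphere $\{\norm{X}_W = R\}$. Testing with $X=(\vu_h,s_h,c_h)$ itself and invoking the coercivity estimates~\eqref{eq:coerciv_ah}--\eqref{eq:coerciv_am}, the positivity/skew-symmetry identities~\eqref{eq:chgeq0}--\eqref{eq:cTSeq0} (applicable since $v_{\mathrm{p}}\ve_z$ is divergence-free and $s_h,c_h$ have vanishing trace), and Young's inequality applied to the cross terms stemming from $\mathcal{D}\vu_h, \mathcal{D}s_h, \mathcal{D}c_h$ and to the linear-in-$\vu_h$ coupling $F(s_h,c_h,\vu_h)$, I expect to obtain
\begin{equation*}
\bigl(\Phi(X),X\bigr)_W \geq C_1\,\norm{X}^2_W + C_2\,\Dt\bigl(\norm{\vu_h}^2_{\Thnorm} + \norm{s_h}^2_{1,\Omega} + \norm{c_h}^2_{1,\Omega}\bigr) - C_3\,M_n,
\end{equation*}
with $M_n$ depending only on the previous iterates. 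Choosing $R$ so that $C_1R^2 > C_3 M_n$ delivers the desired sign on the sphere and hence $X^\star \in W_h$ with $\Phi(X^\star)=0$ and $\norm{X^\star}_W \leq R$.

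The main obstacle will be the absorption step: the cross terms $-\tfrac{4}{3}(\vu_h^n,\vu_h)_\Omega + \tfrac{1}{3}(\vu_h^{n-1},\vu_h)_\Omega$ (together with their $s,c$ analogues) and the coupling through $F$ must be balanced against the $\norm{\vu_h}^2_{0,\Omega}$ contribution coming from $\mathcal{D}\vu_h$ without imposing any smallness condition on $\Dt$. This is feasible because the $H^1$- and broken-norm coercivity provided by $a_1^h$ and $a_2$ furnishes a second layer of control, which via the Poincar\'e inequality~\eqref{eq:poincare} upgrades to uniform bounds on $\norm{s_h}_{0,\Omega}$ and $\norm{c_h}_{0,\Omega}$, thereby closing the required estimate.
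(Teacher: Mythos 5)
Your proposal is correct and follows essentially the same route as the paper: both apply the Brouwer corollary (Theorem~\ref{th:brouwer}) to the map induced by the variational form at step $n+1$, test with the trial itself, and use the coercivity \eqref{eq:coerciv_ah}, \eqref{eq:coerciv_am} together with \eqref{eq:chgeq0}--\eqref{eq:cTSeq0} to obtain nonnegativity of $(\Phi(X),X)$ on a sufficiently large sphere. The only differences are minor: you eliminate the pressure by restricting to the discrete kernel and recovering $p_h^{n+1}$ afterwards via \eqref{eq:infsup_disc}, whereas the paper keeps $(p_h^{n+1},q_h)$ inside the map so that the $b$-terms cancel upon testing, and you bound the previous-iterate contributions directly (via Young's inequality) rather than through the a priori estimates \eqref{eq:utcs_bounds}.
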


The proof of Theorem~\ref{th2.2} makes use of Brouwer's fixed-point theorem in the following form 
 (given by \cite[Corollary 1.1, Chapter IV]{girault1986}): 

\begin{theorem}[Brouwer's fixed-point theorem] \label{th:brouwer}  Let $H$ be a finite-dimensional Hilbert space with scalar product  $(\cdot,\cdot )_H$ and corresponding norm $\norm{\cdot}_H$. Let $\Phi\colon H \to H$ be a continuous mapping  for which there exists $\mu > 0$ such that $(\Phi(u),u)_H \geq 0$ for all $u \in H$  with $\norm{u}_H  = \mu$. 		Then  there exists $u \in H$ such that $\Phi(u) = 0$ and $\norm{u}_H \leq \mu$. \end{theorem}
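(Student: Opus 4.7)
My plan is to deduce this version of Brouwer's theorem from the classical topological Brouwer fixed-point theorem for Euclidean balls (every continuous self-map of a compact convex subset of $\mathbb{R}^n$ admits a fixed point), which I treat as a black box. Since $H$ is finite-dimensional, the closed ball $\bar{B}_\mu \coloneqq \{u \in H : \norm{u}_H \leq \mu\}$ is homeomorphic to such a ball, so the classical statement applies verbatim to continuous self-maps of $\bar{B}_\mu$.

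The argument then proceeds by contradiction. Suppose $\Phi(u) \neq 0$ for every $u \in \bar{B}_\mu$. I would introduce the auxiliary map
$$ T(u) \coloneqq -\mu \frac{\Phi(u)}{\norm{\Phi(u)}_H}, $$
which is well defined and continuous on $\bar{B}_\mu$ because $\Phi$ is continuous and nowhere zero, and which satisfies $\norm{T(u)}_H = \mu$, so $T$ maps $\bar{B}_\mu$ into the sphere $\{\norm{u}_H = \mu\} \subset \bar{B}_\mu$. Applying the classical Brouwer theorem to $T$ produces a fixed point $u^{\star} \in \bar{B}_\mu$ with $T(u^{\star}) = u^{\star}$, and the norm identity $\norm{T(u^{\star})}_H = \mu$ forces $\norm{u^{\star}}_H = \mu$, placing $u^{\star}$ exactly where the hypothesis $(\Phi(u), u)_H \geq 0$ is available.

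Testing the equation $u^{\star} = T(u^{\star})$ against $u^{\star}$ in the inner product then yields
$$ \mu^2 = (u^{\star}, u^{\star})_H = -\mu \frac{(\Phi(u^{\star}), u^{\star})_H}{\norm{\Phi(u^{\star})}_H}, $$
so $(\Phi(u^{\star}), u^{\star})_H = -\mu\, \norm{\Phi(u^{\star})}_H < 0$, contradicting the sign hypothesis on the sphere. Therefore $\Phi$ must vanish at some point of $\bar{B}_\mu$, and that point automatically satisfies $\norm{u}_H \leq \mu$.

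The only genuinely nontrivial ingredient is the classical Brouwer theorem itself; the rest is algebra on the sign condition. If one tried to avoid that appeal, one would essentially be forced to produce a continuous retraction of $\bar{B}_\mu$ onto its boundary, which is impossible precisely by the Brouwer theorem. So the main obstacle is conceptual rather than technical, and the proposed argument routes around it by using the sphere-valued map $T$ as the bridge between the fixed-point conclusion and the sign hypothesis.
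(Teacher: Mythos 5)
Your argument is correct: the auxiliary map $T(u) = -\mu\,\Phi(u)/\norm{\Phi(u)}_H$ is well defined under the contradiction hypothesis, its fixed point necessarily lies on the sphere $\norm{u}_H=\mu$, and pairing the fixed-point identity with $u^{\star}$ yields $(\Phi(u^{\star}),u^{\star})_H = -\mu\norm{\Phi(u^{\star})}_H < 0$, contradicting the sign condition. The paper does not prove this statement itself but simply cites it from Girault--Raviart \cite[Corollary~1.1, Chapter~IV]{girault1986}, and your reduction to the classical Brouwer theorem is precisely the standard argument given in that reference, so there is nothing to add.
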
 

\begin{proof}[Proof of Theorem~\ref{th2.2}]
	To simplify the proof we introduce the following constants:
	\begin{gather*} 
	C_u  \coloneqq  C_1 \bigl(\norm{s_h^{1}}^2_{0,\Omega} + \norm{2s_h^{1} - s_h^{0}}^2_{0,\Omega}+\norm{c_h^{1}}^2_{0,\Omega} + \norm{2c_h^{1} - c_h^{0}}^2_{0,\Omega}+\norm{\vu_h^1}^2_{0,\Omega} + \norm{2\vu_h^1 - \vu_h^0}^2_{0,\Omega}  \bigr), \\
	C_s  \coloneqq C_2 \bigl( \norm{s_h^{1}}^2_{0,\Omega} + \norm{2s_h^{1} - s_h^{0}}^2_{0,\Omega} \bigr), \qquad 
	C_c  \coloneqq C_3 \bigl( \norm{c_h^{1}}^2_{0,\Omega} + \norm{2c_h^{1} - c_h^{0}}^2_{0,\Omega} \bigr).
	\end{gather*}
		We proceed by induction on $n \geq 2$. We define the mapping 
	\begin{align*} 
	\Phi : \vV_h\times \mathcal{Q}_h \times \mathcal{M}_{h,0} \times \mathcal{M}_{h,0} \to \vV_h\times \mathcal{Q}_h \mathcal{M}_{h,0} \times \mathcal{M}_{h,0}  
	\end{align*} 
	using the relation
	\begin{align*}
	&\bigl( \Phi(\vu_h^{n+1}, p_h^{n+1},s_h^{n+1},c_h^{n+1}),(\vv_h, q_h, \varphi_h, \psi_h) \bigr)_{\Omega} \\
	& = \frac{( \mathcal{D} \vu_h^{n+1},\vv_h )_{\Omega}}{2\Dt}  + a_1^h \bigl(c_h^{n+1};\vu_h^{n+1},\vv_h \bigr) 
	+ c_1^h \bigl(\vu_h^{n+1};\vu_h^{n+1}, \vv_h \bigr) + b(\vv_h,p_h^{n+1}) -b(\vu_h^{n+1},q_h)  - F\bigl(s_h^{n+1},c_h^{n+1},\vv_h \bigr) 
	\\ & \quad+ \frac{( \mathcal{D} s_h^{n+1},\varphi_h)_{\Omega}}{2\Dt}   + a_2\bigl(s_h^{n+1},\varphi_h \bigr) 
	+ c_2 \bigl(\vu_h^{n+1};s^{n+1}_h,\varphi_h \bigr)+ \frac{(\mathcal{D} c_h^{n+1},\psi_h )_{\Omega}}{2\Dt}   + \frac{a_2 (c_h^{n+1},\psi_h  )}{\tau} \\ & \quad
	+ c_2 \bigl(\vu_h-v_\mathrm{p}\ve_z;c^{n+1}_h,\psi_h \bigr).
	\end{align*}

	Note this map is well-defined and continuous on $\vV_h \times\mathcal{Q}_h \times \mathcal{M}_{h,0} \times \mathcal{M}_{h,0}$. On the other hand, if we take 
	$$(\vv_h, q_h, \varphi_h, \psi_h)=(\vu_h^{n+1}, p_h^{n+1}, s_h^{n+1}, c_h^{n+1}),$$ 
	and employ \eqref{eq:chgeq0}, \eqref{eq:cTSeq0} and \eqref{eq:coerciv_ah}, we obtain
	\begin{align*}
	&\bigl( \Phi(\vu_h^{n+1},p_h^{n+1}, s_h^{n+1}, c_h^{n+1}),(\vu_h^{n+1}, p_h^{n+1}, s_h^{n+1}, c_h^{n+1}) \bigr)_{\Omega}\\
	& \geq -\frac{1}{2\Dt} \norm{4\vu_h^n-\vu_h^{n-1}}_{0,\Omega} \norm{\vu_h^{n+1}}_{0,\Omega} + \tilde{\alpha}_a\norm{\vu_h^{n+1}}^2_{\Thnorm}   - C_f  \bigl(\norm{s_h^{n+1}}_{0,\Omega}+\norm{c_h^{n+1}}_{0,\Omega} \bigr)\norm{\vu_h^{n+1}}_{0,\Omega} \\ & \quad - \frac{1}{2\Dt} \norm{4s_h^{n}-s_h^{n-1}}_{0,\Omega}\norm{s_h^{n+1}}_{0,\Omega}+ \hat{\alpha}_a\norm{s^{n+1}_h}_{1,\Omega}^2 - 
	\frac{1}{2\Dt}\norm{4c_h^{n}-c_h^{n-1}}_{0,\Omega}\norm{c_h^{n+1}}_{0,\Omega} 
	+ \frac{\hat{\alpha}_a}{\tau}\norm{c^{n+1}_h}_{1,\Omega}^2.
	\end{align*}
	Next,   using \eqref{eq:utcs_bounds} we deduce that
	\begin{align*}
	&\bigl( \Phi(\vu_h^{n+1}, p_h^{n+1}, s_h^{n+1}, c_h^{n+1}),(\vu_h^{n+1}, p_h^{n+1}, s_h^{n+1}, c_h^{n+1}) \bigr)_{\Omega} \\
	&  \geq \frac{\tilde{\alpha}_a}{2} \norm{\vu_h^{n+1}}^2_{0,\Omega} + \hat{\alpha}_a \norm{s_h^{n+1}}^2_{0,\Omega} + \frac{\hat{\alpha}_a}{\tau} \norm{c_h^{n+1}}^2_{0,\Omega}  - \frac{5}{2\Dt} C_u \norm{\vu_h^{n+1}}_{0,\Omega} - C_f(C_u+C_c)^{1/2}\norm{\vu_h^{n+1}}_{0,\Omega}\\
	& \quad - \frac{5}{2\Dt} C_s \norm{s_h^{n+1}}_{0,\Omega} - \frac{5}{2\Dt} C_c \norm{c_h^{n+1}}_{0,\Omega}.
	\end{align*}
	Then,	setting 
	\begin{align*} 
	C_R = \min\left \{ \tilde{\alpha}_a, \frac{\hat{\alpha}_a}{\tau}  \right\}, \quad  C_r = \sqrt{2} \max \left\{ \frac{5}{2\Dt} C_u, C_f(C_u+C_c)^{1/2}, \frac{5}{2\Dt} C_s, \frac{5}{2\Dt} C_c \right\},
	\end{align*}  
	we may apply the inequality $a+b \leq \sqrt{2} (a^2+b^2)^{1/2}$, valid for all $a,b\in \mathbb{R}$, to obtain
	\begin{align*}
	&	\bigl( \Phi(\vu_h^{n+1}, p_h^{n+1},  s_h^{n+1}, c_h^{n+1}),(\vu_h^{n+1}, p_h^{n+1}, s_h^{n+1}, c_h^{n+1}) \bigr)_{\Omega} \\ &\quad \geq  
	C_{R} \bigl(\norm{\vu_h^{n+1}}_{0,\Omega}^2 +  \norm{s_h^{n+1}}_{0,\Omega}^2  + \norm{c_h^{n+1}}_{0,\Omega}^2 \bigr)   
	- C_{r} \bigl(\norm{\vu_h^{n+1}}_{0,\Omega}^2 + \norm{s_h^{n+1}}_{0,\Omega}^2 + \norm{c_h^{n+1}}_{0,\Omega}^2 \bigr)^{1/2}.
	\end{align*} 
	Hence, the right-hand side is nonnegative on a sphere of radius $r  \coloneqq   C_r / C_R$. Consequently, by  
	Theorem~\ref{th:brouwer}, there exists a solution to the fixed-point problem 
	$\Phi  (\vu_h^{n+1},p_h^{n+1}, s_h^{n+1}, c_h^{n+1}  )=0$.  
\end{proof}


Let us denote by $\IN\colon C(\bar{\Omega}) \to \mathcal{M}_h$ the  nodal interpolator with respect to a unisolvent set of Lagrangian interpolation nodes associated with 
$\mathcal{M}_h$. $\IBDM \vu$ will  
denote the BDM projection of~$\vu$, and $\IL p$~is the $L^2$-projection of~$p$ onto~$\mathcal{Q}_h$. Under usual assumptions, the following approximation properties  hold (see \cite{konno2011}):  
\begin{align}   \label{eq:interp1}
&	\norm{\vu - \IBDM \vu}_{\Thnorm} \leq C^* h^{k+1} \norm{\vu}_{k+1,\Omega}, \quad  	\norm{c - \IN c}_{1,\Omega} \leq C^* h^k \norm{c}_{k+1,\Omega}, \quad  
\norm{p - \IL p}_{0,\Omega} \leq C^* h^k \norm{p}_{k,\Omega}. 
\end{align}

The following development follows the structure adopted in \cite{aldbaissy2018}.

\begin{lemma} \label{lemma:consistency}
	Assume that $\vu \in \vH^2(\Omega)$, $p\in L^2(\Omega)$ and $s,c \in H^1(\Omega)$. Then we have 
	\begin{subequations} 
 \label{lemm2.2eqns} \begin{align}
(\partial_t \vu, \vv)_{\Omega} + a^h_1(c;\vu,\vv) + c^h_1(\vu;\vu,\vv) + b(\vv,p) & = F(\vv) \quad \text{\em for all $\vv\in \vV_h$,} \label{lemm2.2eqnsa}\\
b(\vu,q) & = 0 \quad\text{\em for all $q \in \mathcal{Q}_h$,}  \label{lemm2.2eqnsb} \\ 
(\partial_t s, \varphi)_\Omega + a_2(s,\varphi) + c_2(\vu;s,\varphi) & = 0  \quad   \text{\em for all $\varphi\in \mathcal{M}_h$,} \label{lemm2.2eqnsc} \\
(\partial_t c, \psi)_\Omega +\frac{1}{\tau} a_2(c,\psi) + c_2(\vu-v_\mathrm{p}\ve_z;c,\psi) & = 0  \quad   \text{\em for all $\psi\in \mathcal{M}_h$.} \label{lemm2.2eqnsd}
	\end{align} \end{subequations} 
\end{lemma}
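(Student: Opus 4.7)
The plan is to derive each identity in \eqref{lemm2.2eqns} from either the strong PDE system \eqref{eq:model} or the continuous weak formulation \eqref{eq:main_var_mod}, relying on the assumed regularity to absorb all DG consistency and penalty contributions. Two of the four identities come for free. The incompressibility identity \eqref{lemm2.2eqnsb} follows at once because \eqref{goveq2} gives $\sdiv \vu = 0$ pointwise in $\Omega$, so $b(\vu,q_h) = \tfrac{1}{\rho_{\mathrm{m}}}(q_h,\sdiv\vu)_\Omega$ vanishes for every $q_h \in \mathcal{Q}_h \subset L^2(\Omega)$. The species identities \eqref{lemm2.2eqnsc}--\eqref{lemm2.2eqnsd} follow by restricting the continuous weak equations \eqref{eq:main_var_mod-3}--\eqref{eq:main_var_mod-4} to conforming discrete test functions in $\mathcal{M}_h \cap H^1_0(\Omega)$, since the forms $a_2$ and $c_2$ are exactly the same in the continuous and discrete formulations.

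The substantive part is the momentum identity \eqref{lemm2.2eqnsa}, which I would obtain by multiplying \eqref{goveq1} by an arbitrary $\vv \in \vV_h$ and rewriting each term in SIPG/upwind form via element-wise integration by parts. For the viscous contribution, summing
\begin{equation*}
\int_K \nu(c)\bnabla\vu:\bnabla\vv = -\int_K \sdiv(\nu(c)\bnabla\vu)\cdot\vv + \int_{\partial K}\nu(c)(\bnabla\vu\cdot\vn_K)\cdot\vv
\end{equation*}
over $K \in \Th$ and regrouping the interface contributions into averages and jumps produces the bulk term $\int_\Omega \nu(c)\bnabla_h\vu:\bnabla_h\vv$, an average-of-flux edge term $\sum_e\int_e \dmean{\nu(c)\bnabla\vu\cdot\vn_e}\cdot\djump{\vv}$, and a jump-of-flux term. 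The latter disappears because $\vu \in \vH^2(\Omega)$ renders the normal flux single-valued across every interior edge. Moreover, $\djump{\vu}$ is zero on interior edges by continuity of $\vu$ and on boundary edges since $\vu \in \vH^1_0(\Omega)$, so the two SIPG terms in \eqref{a1hdef} weighted by $\djump{\vu}$ and by the penalty coefficient also drop out. Collecting these observations yields $a_1^h(c;\vu,\vv) = -(\sdiv(\nu(c)\bnabla\vu),\vv)_\Omega$.

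The convective term is easier: continuity of $\vu \in \vH^1(\Omega)$ across interfaces gives $\vu^e - \vu = 0$ on every interior edge, so the upwind correction in $c_1^h$ vanishes and $c_1^h(\vu;\vu,\vv) = (\vu\cdot\bnabla\vu,\vv)_\Omega$. For the pressure term, element-wise integration by parts combined with the $\mathbf{H}(\sdiv)$-conformity of $\vV_h$ (which makes normal inter-element contributions cancel) and the zero normal trace of $\vv$ on $\partial\Omega$ yields $-\tfrac{1}{\rho_{\mathrm{m}}}(\nabla p,\vv)_\Omega = \tfrac{1}{\rho_{\mathrm{m}}}(p,\sdiv\vv)_\Omega = b(\vv,p)$. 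Substituting these three identifications into the tested strong momentum equation reproduces \eqref{lemm2.2eqnsa}. The main obstacle is the careful edge bookkeeping in the DG integration by parts for the viscous term; the rest is clean once the regularity $\vu \in \vH^2(\Omega) \cap \vH^1_0(\Omega)$ is in hand.
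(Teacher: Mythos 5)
Your proposal is correct and takes essentially the same route as the paper, whose proof simply invokes element-wise integration by parts (citing Arnold et al.) for the momentum identity and obtains the species identities directly from the continuous weak form; you have merely spelled out the standard SIPG/upwind consistency bookkeeping (single-valued fluxes, vanishing jumps and upwind corrections for the $\vH^2$ solution) in full detail.
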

\begin{proof}
	Since we assume $\vu\in\vH^2(\Omega)$, integration by parts yields the required result. See also \cite{arnold2001}. The third and fourth equations are a straightforward result from the continuous form.
\end{proof}

Now we decompose the errors as follows: 
\begin{align*}
\vu_h - \vu  &= \eIu + \ePu = (\Iu - \vu) + (\vu_h - \Iu), \quad 
p_h - p = \eIp + \ePp = (\Ip - p) + (p_h - \Ip), \\
s_h - s  &= \eIs + \ePs = (\Is-s) + (s_h - \Is),\quad 
c_h - c  = \eIc + \ePc = (\Ic-c) + (c_h - \Ic).
\end{align*}
Assuming that $\vu_h^0 = \Iu(0)$, $s_h^0 = \Is(0)$ and $c_h^0 = \Ic(0)$, we will use also the notation $\eIu^n = (\vu(t_n) - \Iu(t_n))$ and  $\ePu^n = (\Iu(t_n) - \vu_h^n)$, and similar  notation for other variables.  Since for the first time iteration of system \eqref{eq:discr_var_mod} we adopt a  backward Euler scheme,  an error estimate is required  for this step.

\begin{theorem} \label{thm:ebound_h1}
	Let us assume that 
	\begin{align*} 
	& \vu  \in L^{\infty}(0,\Tend; \vH^{k+1}_0(\Omega) \cap \vC^1(\Th)), \quad 
	\vu' \in L^{\infty}(0,\Tend;\vH^k(\Omega)), \quad 
	 \vu''  \in L^{\infty}(0,\Tend;\vL^2(\Omega)), \\
	& p  \in L^{\infty}(0,\Tend;H^2(\Omega)), \quad 
	s  \in L^{\infty}(0,\Tend; H^{k+1}_{0}(\Omega)), \quad 
	 {s}^{\prime}  \in L^{\infty}(0,\Tend;H^k(\Omega)),  \quad 
	{s}^{ \prime \prime}   \in L^{\infty}(0,\Tend;L^2(\Omega)) , \\ 
	&c   \in L^{\infty}(0,\Tend; H_0^{k+1}(\Omega)), \quad 
	 {c}^{ \prime}   \in L^{\infty}(0,\Tend;H^k(\Omega)), \quad 
	{c}^{ \prime \prime}    \in L^{\infty}(0,\Tend;L^2(\Omega)), 
	\end{align*} 
	and also that $\smash{\norm{\vu}_{L^{\infty}(0,\Tend;\vW^{1,\infty}(\Omega))} < M}$  for a sufficiently small  $M>0$ (a precise condition can be found in Theorem~\ref{thm:errors_u}). Then there exist positive constants $C_u^1$, $C_{s}^1$, $C_c^1$, independently of~$h$ and~$\Dt$, such that
	\begin{align*}
	\frac{1}{4}\norm{\ePu^1}^2_{0,\Omega} + \frac{1}{2}\Dt \tilde{\alpha}_a \norm{\ePu}_{\Thnorm}^2 &\leq C_u^1(h^{2k}+\Dt^4),\\
	\frac{1}{4}\norm{\ePs^1}^2_{0,\Omega} + \frac{1}{2}\Dt \hat{\alpha}_a \norm{\ePs}_{1,\Omega}^2 &\leq C_s^1(h^{2k}+\Dt^4),\\
	\frac{1}{4}\norm{\ePc^1}^2_{0,\Omega} + \frac{1}{2} \Dt \hat{\alpha}_a \norm{\ePc^1}^2_{0,\Omega}  &\leq C_c^1(h^{2k}+\Dt^4). 
	\end{align*}
\end{theorem}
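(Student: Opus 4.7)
The plan is to derive three coupled error equations for the first time step by subtracting the backward-Euler scheme at $t^1$ from the consistency identities of Lemma~\ref{lemma:consistency} evaluated at $t^1$, and then test against the projection components $\ePu^1, \ePs^1, \ePc^1$. Since by assumption $\ePu^0=\ePs^0=\ePc^0=0$, the discrete time derivative collapses to $\ePu^1/\Dt$ (respectively for $s,c$), and after multiplying each identity by $\Dt$ the diagonal block on the left reads $\|\ePu^1\|_{0,\Omega}^2+\Dt\tilde{\alpha}_a\|\ePu^1\|_{\Thnorm}^2$ by \eqref{eq:coerciv_ah}, with analogous expressions for the scalar fields via \eqref{eq:coerciv_am}. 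The nonlinear convective diagonals are non-negative thanks to \eqref{eq:chgeq0} and \eqref{eq:cTSeq0}, valid because $\vu_h^1$ and $\vu_h^1-v_\mathrm{p}\ve_z$ are discretely solenoidal. By the BDM commuting-diagram property combined with discrete incompressibility, $\sdiv\ePu^1\equiv 0$, so both pressure cross-terms $b(\ePu^1,\ePp^1)$ and $b(\ePu^1,\eIp^1)$ vanish.

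On the right, three types of contributions arise. The BE truncation error $R_{\vu}^1=\Dt^{-1}\int_{t^0}^{t^1}(t-t^0)\vu''(t)\dt$, bounded by $\|R_{\vu}^1\|_{0,\Omega}\leq \tfrac{\Dt}{2}\|\vu''\|_{L^\infty(0,\Tend;\vL^2(\Omega))}$, produces terms of order $\Dt^2\|\ePu^1\|_{0,\Omega}$ that Young's inequality converts into the advertised $O(\Dt^4)$ source plus an absorbable multiple of $\|\ePu^1\|_{0,\Omega}^2$; the analogues $R_s^1, R_c^1$ are handled identically. The interpolation-error time increments obey $\|\eIu^1-\eIu^0\|_{0,\Omega} \leq \int_{t^0}^{t^1}\|(\Pi_h-I)\vu'(t)\|_{0,\Omega}\dt \leq C\,\Dt\,h^{k+1}\|\vu'\|_{L^\infty(0,\Tend;\vH^{k+1})}$, contributing subordinate $O(\Dt^2 h^{2k+2})$ terms. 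The remaining off-diagonal spatial contributions — continuity of $a_1^h$ acting on $\eIu^1$ from Lemma~\ref{lemma4:ab_bounds}, the Lipschitz bound \eqref{eq:a1lipschitz}, the convective increments \eqref{eq:bound_ch1} and \eqref{eq:bound_Cm0}, continuity of $F$, and the approximation estimates \eqref{eq:interp1} — yield the limiting $O(\Dt h^{2k})$ terms, the $h^{k}$ rate being driven by the $H^1$-order of $\mathcal{I}_h c$.

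The main obstacle is the cross-coupling among the three inequalities. Specifically, applying \eqref{eq:a1lipschitz} to $a_1^h(c_h^1-c(t^1);\vu(t^1),\ePu^1)$ yields a term of size $\tilde{C}_{\mathrm{Lip}}\|\vu(t^1)\|_{\vW^{1,\infty}(\Omega)}(\|\eIc^1\|_{1,\Omega}+\|\ePc^1\|_{1,\Omega})\|\ePu^1\|_{1,\Th}$, which pushes $\|\ePc^1\|_{1,\Omega}$ into the velocity estimate; symmetrically, \eqref{eq:bound_ch1} and \eqref{eq:bound_Cm0} introduce $\|\ePu^1\|_{\Thnorm}$ into the transport estimates via the convective mismatches. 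To close the argument I add the three $\Dt$-weighted inequalities, use the smallness hypothesis $\|\vu\|_{L^\infty(0,\Tend;\vW^{1,\infty}(\Omega))}<M$ to keep the Lipschitz prefactor uniformly bounded, and choose the Young parameters sufficiently small (proportional to $\tilde{\alpha}_a$ and $\hat{\alpha}_a$) so that every cross-coupled coercive norm appearing on the right is absorbed into a diagonal coercive norm on the left. Separating the resulting combined estimate then delivers the three individual bounds of the theorem.
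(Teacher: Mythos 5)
Most of your argument coincides with the paper's: both subtract the backward-Euler step from the consistency identity of Lemma~\ref{lemma:consistency}, test with $\ePu^1,\ePs^1,\ePc^1$ (your observation that $\sdiv\ePu^1=0$, so the pressure terms drop, is the same reduction to the discrete kernel that the paper performs via the inf-sup condition \eqref{eq:infsup_disc}), and use coercivity, positivity of the upwinded convection, the Taylor remainder for the $O(\Dt^4)$ contribution, the interpolation estimates \eqref{eq:interp1} and Young's inequality, exactly as in \eqref{eq:tmp1_u}--\eqref{eq:tmp1_c}.

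The gap is in your closing step. You propose to add the three $\Dt$-weighted inequalities and absorb every cross-coupled coercive norm by choosing $M$ and the Young parameters small. This works for the coupling of $\norm{\ePc^1}_{1,\Omega}$ into the velocity inequality, whose coefficient carries the factor $\tilde{C}_{\mathrm{Lip}}^2M^2$ via \eqref{eq:a1lipschitz}. It does not work for the reverse coupling: the advective mismatch $c_2(\vu_h^1-\vu(\Dt);s(\Dt),\ePs^1)$, bounded through \eqref{eq:bound_Cm0}, produces after Young the term $\frac{4\tilde{C}_c^2}{\hat{\alpha}_a}\Dt\,\norm{s}^2_{L^{\infty}(0,\Tend;H^1(\Omega))}\norm{\ePu^1}^2_{\Thnorm}$ (and its analogue with $c$) on the right of the transport estimates, and its coefficient is an $O(1)$ quantity determined by the exact solution, not by $M$, $\Dt$, or the Young parameter -- shifting the Young balance merely moves the large constant onto $\norm{\ePs^1}^2_{1,\Omega}$, where it need not be dominated by $\hat{\alpha}_a$. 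Hence summing the three inequalities cannot absorb this term into $\Dt\tilde{\alpha}_a\norm{\ePu^1}^2_{\Thnorm}$ without an unstated smallness assumption on $\norm{s}_{L^{\infty}(0,\Tend;H^1(\Omega))}$ and $\norm{c}_{L^{\infty}(0,\Tend;H^1(\Omega))}$. The paper closes the system differently: it first isolates $\Dt\norm{\ePu^1}^2_{\Thnorm}$ from the velocity inequality (see \eqref{eq:par1}), substitutes that bound into the concentration inequality, absorbs there (where the self-coupling coefficient is the \emph{product} of the two cross-coefficients and thus inherits the factor $M^2$, with $\Dt$ small handling the $C_f$ contributions), and then back-substitutes into \eqref{eq:par1} and \eqref{eq:tmp1_s}. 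You need this sequential substitution, or an equivalent product-of-coefficients argument, to decouple the estimates; simple addition does not suffice. You should also state explicitly that $\Dt$ must be taken sufficiently small, as the paper does.
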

\begin{proof}
	As  in the continuous case, we define the discrete kernel of the bilinear form $b(\cdot,\cdot)$ as 
	\[
	\vX_h \coloneqq \Set{ \vv_h\in \vV_h}{b(\vv_h,q_h) = 0\; \forall q_h \in  \mathcal{Q}_h} = \Set{ \vv_h\in\vV_h}{\sdiv \vv_h = 0 \text{ in  $\Omega$}},
	\]
	and relying on the inf-sup condition \eqref{eq:infsup_disc}, we can 
	continue with an equivalent discrete problem without pressure.
	
	Taking into account the assumed regularity for $\vu$, we  have for all $\vx$, a $\gamma(\vx) \in (0,1)$  such that
	$$\vu(0) = \vu(\Dt) - \Dt\vu'(\Dt) + \frac{1}{2}\Dt^2 \vu''(\Dt \gamma), $$
	then $\vu$ satisfies the following error equation
	\begin{align*}
	\norm{\ePu^1}^2_{0,\Omega}+\Dt \tilde{\alpha}_a \norm{\ePu^{1}}^2_{\Thnorm} &\leq 
	\bigl( \Iu(\Dt)-\vu(\Dt) + \vu_h^0- \vu(0), \ePu^1 \bigr)_{\Omega}\\& \quad + \Dt 
	\bigl(a_1^h(c_h^1;\Iu(\Dt), \ePu^1)-a_1^h(c^1;\vu(\Dt), \ePu^1) \bigr) 		
	\\ & \quad  -\Dt \bigl(c_1^h(\vu_h^1;\vu_h^1,\ePu^1) - c_1^h(\vu(\Dt),\vu(\Dt),\ePu^1) \bigr) \\
	& \quad + \Dt \bigl( F(s_h^1,c_h^1,\ePu^1) - F(s(\Dt),c(\Dt),\ePu^1) \bigr),
	\end{align*}
	which results after choosing $\ePu^1$ as test function in the first equation of the reduced form of Lemma \ref{lemma:consistency} and system \eqref{eq:discr_var_mod}, performing an Euler scheme step, subtracting both equations, and adding $\pm a_1^h(c_h^1; \Iu(\Dt), \ePu^1)$.
	Now, invoking the  approximation estimates \eqref{eq:interp1}, Young's inequality, and the stability properties, we get 
	\begin{align} \label{eq:tmp1_u}	\begin{split}
	\frac{1}{4}\norm{\ePu^1}^2_{0,\Omega}+ \frac{1}{4}\Dt \tilde{\alpha}_a \norm{\ePu^{1}}^2_{\Thnorm} &\leq Ch^{2k} \Dt \Bigl(\norm{\vu(\Dt)}^2_{k+1,\Omega} + \norm{\vu(0)}^2_{k+1,\Omega} + \norm{c(\Dt)}^2_{k+1,\Omega} \Bigr) \\ &\quad 
	 + C\Dt^4(\norm{\vu''}^2_{L^{\infty}(0,\Tend;\vL^2(\Omega))}) + \frac{4 \tilde{C}_{\mathrm{Lip}}^2 M^2}{\tilde{\alpha}_a} \Dt \norm{\ePc^1}^2_{1,\Omega} \\
	 &\quad +\Dt C_f^2 \norm{\ePc^1}^2_{1,\Omega} + \Dt C_f^2 \norm{\ePs^1}^2_{1,\Omega}.
	\end{split}
	\end{align}
	Next, we choose $\ePs^1$ as test function in \eqref{lemm2.2eqnsc} and system \eqref{eq:discr_var_mod}; we follow the same steps as before, adding to the sum of both equations the term $\pm a_2( \Is^1, \ePc^1)$ to obtain
	\begin{align}  \label{eq:tmp1_s} \begin{split} 
	\frac{1}{4}\norm{\ePs^1}^2_{0,\Omega} + \frac{1}{2} \Dt \hat{\alpha}_a \norm{\ePs^1}^2_{1,\Omega} &\leq C\Dt h^{2k} \Bigl(\norm{\vu(\Dt)}^2_{k+1,\Omega} + \norm{s(\Dt)}^2_{k+1,\Omega} + \norm{s(0)}^2_{k+1,\Omega} \Bigr)  \\
	&\quad +C\Dt^4(\norm{s''}^2_{L^{\infty}(0,\Tend;L^2(\Omega))}) + \frac{4 \tilde{C}_c^2}{\hat{\alpha}_a} \Dt \norm{s}^2_{L^{\infty}(0,\Tend;H^1(\Omega))} \norm{\ePu}^2_{\Thnorm} . 
	\end{split} 
	\end{align}
	In the same way, choosing $\ePc^1$ as test function in \eqref{lemm2.2eqnsd}   and in \eqref{eq:discr_var_mod} we obtain
	\begin{align}  \label{eq:tmp1_c} \begin{split} 
	\frac{1}{4}\norm{\ePc^1}^2_{0,\Omega} + \frac{1}{2} \Dt \hat{\alpha}_a \norm{\ePc^1}^2_{1,\Omega} &\leq C\Dt h^{2k} \Bigl(\norm{\vu(\Dt)}^2_{k+1,\Omega} + \norm{c(\Dt)}^2_{k+1,\Omega} + \norm{c(0)}^2_{k+1,\Omega} + v_\mathrm{p}  \Bigr)  \\
	&\quad +C\Dt^4(\norm{c''}^2_{L^{\infty}(0,\Tend;L^2(\Omega))}) + \frac{4 \tilde{C}_c^2}{\hat{\alpha}_a} \Dt \norm{c}^2_{L^{\infty}(0,\Tend;H^1(\Omega))} \norm{\ePu}^2_{\Thnorm} . 
	\end{split} 
	\end{align}
	Now, from \eqref{eq:tmp1_u} we deduce  that
	\begin{align} \label{eq:par1}
	\Dt \norm{\ePu}^2_{\Thnorm} &\leq C(h^{2k} + \Dt^4) + \frac{16 \tilde{C}_{\mathrm{Lip}}^2 M^2}{ \tilde{\alpha}_a^2} \Dt \norm{\ePc^1}^2_{1,\Omega} + \Dt \frac{4C_f^2}{\tilde{\alpha}_a} \norm{\ePc^1}^2_{1,\Omega} + \Dt \frac{4C_f^2}{\tilde{\alpha}_a} \norm{\ePs^1}^2_{1,\Omega}.
	\end{align}
	We insert the previous identity into \eqref{eq:tmp1_c} and consider $M$ and $\Dt$ sufficiently small such that the terms multiplying $\norm{\ePc}^2_{1,\Omega}$, can be absorbed into the left-hand side of the inequality, to get
	\begin{align}
	\frac{1}{4}\norm{\ePc^1}^2_{0,\Omega} + \frac{1}{4}\Dt \hat{\alpha}_a \norm{\ePc}_{1,\Omega}^2 &\leq C_c^1(h^{2k}+\Dt^4) + \Dt \frac{4C_f^2}{\tilde{\alpha}_a} \norm{\ePs^1}^2_{1,\Omega}. \label{eq:res2}
	\end{align}
	Substituting this result back into \eqref{eq:par1} and then into \eqref{eq:tmp1_s}, get us the second estimate. The first estimate follows by directly substituting \eqref{eq:res2} into \eqref{eq:tmp1_u}.  	
\end{proof}

\begin{theorem} \label{thm:errors_u}
	Let $(\vu,p,s,c)$ be the solution of \eqref{eq:main_var_mod} under the assumptions of Section~\ref{sec:wellp}, and $(\vu_h, p_h,s_h,c_h)$ be the solution of \eqref{eq:full_disc_scheme}. Suppose that 
	\begin{align*} 	
	& \vu   \in L^{\infty}(0,\Tend; \vH^{k+1}(\Omega) \cap \vH^1_0(\Omega)), \quad c   \in L^{\infty}(0,\Tend; H^{k+1}(\Omega) \cap H^1_0(\Omega)), \\ 
	&\vu'  \in L^{\infty}(0,\Tend; \vH^{k}(\Omega)),  \quad   \vu^{(3)}  \in L^2(0,\Tend;\vL^2(\Omega))
	\end{align*} 
	and $\norm{\vu}_{L^{\infty}(0,\Tend;\vH^1(\Omega))} < M$  for a sufficiently small constant $M>0$. Then there exist positive constants $C,\gamma_1 \geq 0$ independent of $h$ and $\Dt$ such that for all $m+1 \leq N$, 
	\begin{align*}
	& \norm{\ePu^{m+1}}^2_{0,\Omega} + \norm{2\ePu^{m+1}-\ePu^{m}}^2_{0,\Omega} + \sum_{n=1}^{m}\norm{\Lambda \ePu^n}^2_{0,\Omega} 
	+ \sum_{n=1}^{m} \Dt \tilde{\alpha}_a \norm{\ePu^{n+1}}^2_{\Thnorm}  \leq C(\Dt^4 + h^{2k}) + \sum_{n=1}^{m} \gamma_1 \Dt \norm{\ePc^{n+1}}^2_{1,\Omega}.
	\end{align*}
\end{theorem}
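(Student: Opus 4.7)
The argument parallels the stability derivation of~\eqref{eq:utcs_bounds}, but applied to the error quantity $\ePu$. Because the inf-sup condition~\eqref{eq:infsup_disc} allows the pressure to be eliminated by restricting to the discrete divergence-free subspace
\[
\vX_h=\{\vv_h\in\vV_h : b(\vv_h,q_h)=0\ \forall q_h\in\mathcal{Q}_h\},
\]
I evaluate~\eqref{lemm2.2eqnsa} at $t^{n+1}$ with test function $\vv_h\in\vX_h$, replace $\partial_t\vu(t^{n+1})$ by $\tfrac{1}{2\Dt}\mathcal{D}\vu(t^{n+1})$ plus a BDF2 truncation remainder $\tau_u^{n+1}$ satisfying $\|\tau_u^{n+1}\|_{0,\Omega}^{2}\leq C\Dt^{3}\int_{t^{n-1}}^{t^{n+1}}\|\vu^{(3)}(\sigma)\|_{0,\Omega}^{2}\,\mathrm{d}\sigma$, and subtract the first equation of~\eqref{eq:full_disc_scheme}. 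Using the splitting $\vu_h-\vu=\ePu-\eIu$ (and analogously for $s,c$), this yields an error equation for $\ePu^{n+1}$ in which $\tau_u^{n+1}$, the BDF2 quotient applied to $\eIu$, the viscosity and convective consistency mismatches, and the source discrepancy act as data.

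Next I test with $4\ePu^{n+1}$. The discrete time derivative yields, through the algebraic identity~\eqref{eq:alg_id}, precisely the telescoping combination appearing on the left of the target estimate, and the diffusive term contributes $4\Dt\tilde\alpha_a\|\ePu^{n+1}\|_{\Thnorm}^{2}$ by~\eqref{eq:coerciv_ah}. The truncation and interpolation data give an $O(h^{2k}+\Dt^{4})$ contribution after Cauchy--Schwarz and~\eqref{eq:interp1}. The viscosity mismatch $a_1^h(c(t^{n+1});\vu(t^{n+1}),\cdot)-a_1^h(c_h^{n+1};\Iu(t^{n+1}),\cdot)$ is split by adding $\pm a_1^h(c_h^{n+1};\vu(t^{n+1}),\cdot)$ and estimated via~\eqref{eq:a1lipschitz} and Lemma~\ref{lemma4:ab_bounds}, producing $M\|\ePc^{n+1}\|_{1,\Omega}\|\ePu^{n+1}\|_{\Thnorm}$ plus interpolation residuals. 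The convective mismatch is handled by adding $\pm c_1^h(\vu_h^{n+1};\vu(t^{n+1}),\cdot)$, whereupon~\eqref{eq:chgeq0} annihilates the diagonal piece (since $\vu_h^{n+1}\in\vX_h$) and~\eqref{eq:bound_ch1} bounds the cross term. The $F$-contribution is estimated directly by $C_f(\|\ePs^{n+1}\|_{0,\Omega}+\|\ePc^{n+1}\|_{0,\Omega}+h^{k+1})\|\ePu^{n+1}\|_{0,\Omega}$.

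After Young's inequality absorbs the terms quadratic in $\|\ePu^{n+1}\|_{\Thnorm}$ into the coercive contribution---which succeeds only because $M$ is small enough for the convective constant to remain strictly below $\tilde\alpha_a$---I sum from $n=1$ to $n=m$, invoke Theorem~\ref{thm:ebound_h1} to dispose of the $n=1$ boundary terms left over by the initial backward Euler step, and apply a discrete Gronwall lemma to close the residual $\Dt\sum_{j}\|\ePu^{j}\|_{0,\Omega}^{2}$ that remains on the right. The $\ePs$-dependent terms are absorbed by the companion estimate for the salinity error, whose proof has exactly the same structure thanks to~\eqref{eq:cTSeq0}, leaving only the $\ePc^{n+1}$ term on the right of the final bound. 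The main obstacle is the nonlinear convective term: its trilinearity forces a product of the form $M\|\ePu^{n+1}\|_{\Thnorm}^{2}$ after invoking~\eqref{eq:chgeq0} and~\eqref{eq:bound_ch1}, and the smallness hypothesis on $\vu$ is precisely what makes the Young absorption step (and hence the Gronwall closure) viable.
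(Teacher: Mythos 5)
Your proposal is correct and follows essentially the same route as the paper: restriction to the discrete divergence-free subspace, subtraction of the consistency identity from the fully discrete scheme, testing with (a multiple of) $\ePu^{n+1}$, the algebraic identity \eqref{eq:alg_id} for the BDF2 telescoping, the $\pm$-insertions for the viscosity and convective mismatches with \eqref{eq:a1lipschitz}, \eqref{eq:chgeq0} and \eqref{eq:bound_ch1}, Young absorption under the smallness of $M$, summation, and Theorem~\ref{thm:ebound_h1} for the first step. The only (harmless) deviation is your final discrete Gronwall step, which the paper does not need because every $\ePu^{n+1}$ factor on the right is bounded in the $\Norm_{\Thnorm}$ norm and absorbed directly into the coercive term.
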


\begin{proof}
	We appeal to the reduced form of the problem again, taking $\vu_h \in \vX_h$ and $\vu\in \vX$, then we choose as test function $\vv_h=\ePu^{n+1}$ in the first equation of \eqref{eq:full_disc_scheme} and insert the terms 
	\begin{align*} 
	 \pm \frac{(\mathcal{D}  \vu(t_{n+1}) , \ePu^{n+1})_{\Omega}}{2 \Delta t}, \quad  
	  \pm \frac{( \mathcal{D} \Iu(t_{n+1}) , \ePu^{n+1})_{\Omega}}{2\Dt},  \quad 
	\pm a_1^h \bigl(c_h^{n+1};\Iu(t_{n+1}), \ePu^{n+1} \bigr)_{\Omega}.  \end{align*} 
	Hence we get
	\begin{align} \label{eq:tmp2_ud} \begin{split} 
	&-\frac{ (\mathcal{D} \ePu^{n+1}, \ePu^{n+1})_{\Omega}}{2\Dt}  - \frac{(\mathcal{D} \eIu^{n+1} , \ePu^{n+1} )_{\Omega}}{2\Dt}
	+ \frac{(\mathcal{D} \vu(t_{n+1}) , \ePu^{n+1})_{\Omega}}{2\Dt}  + a_1^h \bigl(c_h^{n+1};\ePu^{n+1},\ePu^{n+1} \bigr)   \\
	&+ a_1^h  \bigl(c_h^{n+1};\Iu(t_{n+1}),\ePu^{n+1} \bigr)  + c_1^h \bigl(\vu_h^{n+1},\vu_h^{n+1},\ePu^{n+1} \bigr)    = F \bigl(s_h^{n+1},c_h^{n+1},\ePu^{n+1} \bigr).  \end{split} 
	\end{align}
	We consider  \eqref{lemm2.2eqnsa} (see Lemma \ref{lemma:consistency}) at $t=t_{n+1}$ and $\vv = \ePu^{n+1}$.  Inserting the term 
	$\pm  (\mathcal{D} \vu(t_{n+1}), \ePu^{n+1})_{\Omega}/(2\Dt)$,  we readily deduce the 
	identity 
	\begin{align} \label{eq:tmp2_uc} 
	&\frac{( \mathcal{D} \vu(t_{n+1}) , \ePu^{n+1})_{1,\Omega}}{2\Dt} + a_1^h \bigl(c^{n+1};\vu(t_n),\ePu^{n+1} \bigr)   
	+ c_1^h \bigl(\vu(t_{n+1}),\vu(t_{n+1}),\ePu^{n+1} \bigr) 
	= F \bigl(s(t_{n+1}),c(t_{n+1}),\ePu^{n+1} \bigr). 
	\end{align}
	We can then subtract \eqref{eq:tmp2_uc} from \eqref{eq:tmp2_ud} and multiply both sides by $4\Dt$, yielding  $I_1 + I_2 + \dots + I_7 =0$, with 
	\begin{align*}
	&I_1 \coloneqq  2\bigl( \mathcal{D} \ePu^{n+1} , \ePu^{n+1}\bigr)_{\Omega},   \quad 
	I_2 \coloneqq 4\Dt a_1^h(c_h^{n+1};\ePu^{n+1},\ePu^{n+1})_{\Omega} , \quad 
	 I_3 \coloneqq   4\Dt \left(\vu'(t_{n+1}) - \frac{1}{2 \Dt}  \mathcal{D}\vu(t_{n+1}) , \ePu^{n+1} \right)_{\Omega},  \\
	&I_4 \coloneqq  2 \bigl(\mathcal{D} \eIu^{n+1}, \ePu^{n+1}\bigr)_{\Omega}, \quad 
	 I_5 \coloneqq  4\Dt \left(a_1^h \bigl(c^{n+1};\vu^{n+1},\ePu^{n+1}\bigr) - a_1^h \bigl(c_h^{n+1};\Iu^{n+1},\ePu^{n+1}\bigr) \right),  \\ 
	& I_6 \coloneqq 4\Dt \bigl(c_1^h \bigl(\vu(t_{n+1}),\vu(t_{n+1}),\ePu^{n+1}\bigr)-c_1^h(\vu_h^{n+1},\vu_h^{n+1},\ePu^{n+1})\bigr),   \\
	&   I_7 \coloneqq 4\Dt \bigl( F(s(t_{n+1}),c(t_{n+1}),\ePu^{n+1}) - F(s_h^{n+1},c_h^{n+1},\ePu^{n+1})  \bigr).
	\end{align*}
	For the first term, using \eqref{eq:alg_id} we can assert that 
	\begin{align*} 
	I_1&  = \norm{\ePu^{n+1}}^2_{0,\Omega} + \norm{2\ePu^{n+1}-\ePu^{n}}^2_{0,\Omega} + \norm{\Lambda \ePu^{n+1}}^2_{0,\Omega} 
	- \norm{\ePu^{n}}^2_{0,\Omega} - \norm{2 \ePu^n - \ePu^{n-1}}^2_{0,\Omega}. \end{align*} 
	Using the ellipticity stated in \eqref{eq:coerciv_ah}, we readily get
	$$I_2 \geq 4\Dt \tilde{\alpha_a} \norm{\ePu^{n+1}}^2_{\Thnorm}.$$
	By using Taylor's formula with integral remainder we have
	\begin{align*}
	\abs*{\vu'(t_{n+1}) - \frac{1}{2\Dt} \mathcal{D}  \vu(t_{n+1})} &= \frac{\Dt^{3/2}}{2\sqrt{3}} \norm{\vu^{(3)}}_{L^2(t^{n-1},t^{n+1};\vL^2(\Omega))},
	\end{align*}
	then by combining Cauchy-Schwarz and Young's inequality, we obtain the bound 
	\begin{align*}
	\abs{I_3} \leq \frac{\Dt^4}{24 \varepsilon_1} \norm{\vu^{(3)}}^2_{L^2(t_{n-1},t_{n+1};\vL^2(\Omega))} + \frac{\Dt \varepsilon_1}{2} \norm{\ePu^{n+1}}^2_{\Thnorm}.
	\end{align*}
	Now we insert $\pm 4\Dt \eIu'(t_{n+1})$ into the fourth term, which leads to 
	\begin{align*}
	I_4 & = -4\Dt  \bigl(\eIu'(t_{n+1}),\ePu^{n+1} \bigr)_{\Omega}  + \left(\eIu'(t_{n+1}) -  \frac{\mathcal{D} \eIu^{n+1} }{2\Dt}, \ePu^{n+1}\right)_{\Omega}.
	\end{align*}
	Proceeding as before and using \eqref{eq:interp1} on the first term of $I_4$, we get
	\begin{align*}
	\abs{I_4} & \leq \frac{C}{2\varepsilon_2} h^{2k} \norm{\vu'}^2_{\vL^{\infty}(0,\Tend;\vH^{k}(\Omega))} + \frac{\Dt \varepsilon_2}{2} \norm{\ePu^{n+1}}^2_{\Thnorm} \\ & \quad  + \frac{\Dt^4 C}{2 \varepsilon_3} \norm{\vu^{(3)}}^2_{L^2(0,\Tend;\vL^2(\Omega))} + \frac{\Dt \varepsilon_3}{2}\norm{\ePu^{n+1}}_{\Thnorm}^2.
	\end{align*}
	Again we insert $\pm a_1^h(\Ic^{n+1};\vu(t_{n+1}),\ePu^{n+1})$ and  $\pm a_1^h(c_h^{n+1};\vu(t_{n+1}),\ePu^{n+1})$. Then  by \eqref{eq:interp1}, \eqref{eq:a1lipschitz}, Lemma \ref{lemma4:ab_bounds} and Young's inequality we immediately have
	\begin{align*}
	\abs{I_5} &\leq \frac{8 \tilde{C}_{\mathrm{Lip}}^2 M^2 C^*\Dt}{\varepsilon_4} h^{2k} \norm{c}^2_{L^{\infty}(0,\Tend;H^{k+1}(\Omega))} + \frac{\varepsilon_4}{2}\Dt \norm{\ePu^{n+1}}_{\Thnorm} +\frac{8\tilde{C}_{\mathrm{Lip}}^2 M^2 \Dt}{\varepsilon_5} \norm{\ePc^{n+1}}_{1,\Omega}^2 \\
	& \quad + \frac{\varepsilon_5}{2}\Dt \norm{\ePu^{n+1}}_{\Thnorm} + \frac{8 \tilde{C}_{a}^2 C^*\Dt}{\varepsilon_6} h^{2k} \norm{c}^2_{L^{\infty}(0,\Tend;H^{k+1}(\Omega))} + \frac{\varepsilon_6}{2}\Dt \norm{\ePu^{n+1}}_{\Thnorm}.
	\end{align*}
	Adding and subtracting suitable terms within~$I_6$ 
	yields 
	\begin{align*} 
	I_6 = \tilde{I}_6 - 4 \delta t c_1^h \bigl(\vu_h^{n+1},\ePu^{n+1},\ePu^{n+1} \bigr), 
	\end{align*} 
	where we define  
	\begin{align*}
	\tilde{I}_6 \coloneqq  -4\Dt\bigl( & c_1^h(\vu(t_{n+1}),\Iu(t_{n+1}),\ePu^{n+1}) - c_1^h(\Iu(t_{n+1}),\Iu(t_{n+1}),\ePu^{n+1})   \\
	&+  c_1^h(\Iu(t_{n+1}),\Iu(t_{n+1}),\ePu^{n+1})- c_1^h(\Iu(t_{n+1}),\vu(t_{n+1}),\ePu^{n+1})  \\
	&\ + c_1^h(\Iu(t_{n+1}),\vu(t_{n+1}),\ePu^{n+1}) - c_1^h(\vu(t_{n+1}),\vu(t_{n+1}), \ePu^{n+1}) 
	\bigr).
	\end{align*}
	The bound \eqref{eq:bound_ch1} and \eqref{eq:interp1} imply that 
	\begin{align*}
	\abs{\tilde{I}_6}  & \leq 4\Dt \tilde{C}_c \bigl( \norm{\ePu^{n+1}}^2_{\Thnorm} \norm{\Iu(t_{n+1})}_{\Thnorm} +
	\norm{\Iu(t_{n+1})}_{\Thnorm} \norm{\eIu^{n+1}}_{\Thnorm} \norm{\ePu^{n+1}}_{\Thnorm} \\& \qquad + \norm{\eIu^{n+1}}_{\Thnorm}\norm{\vu(t_{n+1})}_{\Thnorm}\norm{\ePu^{n+1}}_{\Thnorm}\bigr)  \\
	& \leq 4\Dt \biggl( \tilde{C}_c C^{*} \norm{\ePu^{n+1}}^2_{\Thnorm} \norm{\vu}^2_{L^{\infty}(0, \Tend; \vH^1(\Omega))} \\
	& \qquad  +
	\frac{2 h^{2k}C\tilde{C}_c^2}{\varepsilon_7}\norm{\vu}^2_{L^{\infty}(0,T;\vH^1(\Omega))} \norm{\vu}^2_{L^{\infty}(0,\Tend;\vH_1^{k+1}(\Omega))}   + \frac{\varepsilon_7}{8} \norm{\ePu^{n+1}}^2_{\Thnorm} \\ & \qquad  +\frac{2 Ch^{2k} \tilde{C}_c^2}{\varepsilon_8}\norm{\vu}^2_{L^{\infty}(0,\Tend;\vH^{k+1}(\Omega))} \norm{\vu}^2_{L^{\infty}(0,\Tend;\vH^1(\Omega))}  
	+ \frac{\varepsilon_8}{8} \norm{\ePu^{n+1}}^2_{\Thnorm}\biggr) \\
	& \leq 4\Dt \biggl( C^{*} \tilde{C}_c M \norm{\ePu^{n+1}}^2_{\Thnorm} +
	\frac{h^{2k}C}{2\varepsilon_7}\norm{\vu}^2_{L^{\infty}(0,T;\vH^1(\Omega))} \norm{\vu}^2_{L^{\infty}(0,\Tend;\vH^{k+1}(\Omega))}  \\&\qquad\qquad + \frac{\varepsilon_7}{8} \norm{\ePu^{n+1}}^2_{\Thnorm} + \frac{2Ch^{2k}}{\varepsilon_8}\norm{\vu}^2_{L^{\infty}(0,\Tend;\vH^{k+1}(\Omega))} \norm{\vu}^2_{L^{\infty}(0,\Tend;\vH^1(\Omega))}  + \frac{\varepsilon_8}{8} \norm{\ePu^{n+1}}^2_{\Thnorm}\biggr),
	\end{align*}
	where $C^*$ is a positive constant coming from \eqref{eq:interp1}.
	We also have
	\begin{align*}
	\abs{\tilde{I_7}} & \leq 4\Dt \biggl( \frac{2 C_f^2}{\varepsilon_9} (\norm{\ePs^{n+1}}_{0,\Omega}^2 + \norm{\eIs^{n+1}}_{0,\Omega}^2) + \frac{2 C_f^2}{\varepsilon_{10}} (\norm{\ePc^{n+1}}_{0,\Omega}^2 + \norm{\eIc^{n+1}}_{0,\Omega}^2) \\
	& \qquad +\frac{\varepsilon_9}{8} \norm{\ePu^{n+1}}_{1,\Th}^2 + \frac{\varepsilon_{10}}{8} \norm{\ePu^{n+1}}_{1,\Th}^2 \biggr) \\
	&\leq 4\Dt \biggl( \frac{2 C_f^2}{\varepsilon_9} (\norm{\ePs^{n+1}}_{0,\Omega}^2 + C^* h^{2k} \norm{s}_{L^{\infty}(0,\Tend,H^{k+1}(\Omega)}^2) + \frac{2 C_f^2}{\varepsilon_{10}} (\norm{\ePc^{n+1}}_{0,\Omega}^2 + + C^* h^{2k} \norm{c}_{L^{\infty}(0,\Tend,H^{k+1}(\Omega)}^2)) \\
	& \qquad +\frac{\varepsilon_9}{8} \norm{\ePu^{n+1}}_{1,\Th}^2 + \frac{\varepsilon_{10}}{8} \norm{\ePu^{n+1}}_{1,\Th}^2 \biggr).
	\end{align*}
	Hence, by choosing $\varepsilon_i= 3\tilde{\alpha}_a / 5$ for $i=1, \dots, 10$, collecting the above estimates, and summing over $1\leq n \leq m$ for all $m+1\leq N$, we get
	\begin{align*}
	& \norm{\ePu^{m+1}}^2_{0,\Omega} + \norm{2\ePu^{m+1}-\ePu^{m}}^2_{0,\Omega} + \sum_{n=1}^{m} \norm{\Lambda \ePu^{n}}^2_{0,\Omega} - 3\norm{\ePu^1}^2_{0,\Omega} \\ & + \sum_{n=1}^{m} \Dt \tilde{\alpha}_a \norm{\ePu^{n+1}}^2_{\Thnorm}  \leq C (\Dt^4 + h^{2k}) +  \frac{16 M^2 C_{\mathrm{Lip}^2} \Dt}{\tilde{\alpha}_a}\sum_{n=1}^{m}  \norm{\ePc^{n+1}}^2_{1,\Omega}, 
	\end{align*}
	where 
	\begin{align*} 
	 M \leq \min \biggl\{\frac{\tilde{\alpha}_a}{4 \tilde{C}_c C^{*}}, \frac{\sqrt{\hat{\alpha}_a}}{4 C_{\mathrm{Lip}} \sqrt{2}} \biggr\} ,  
	  \quad \gamma_1 = \frac{16 M^2 C_{\mathrm{Lip}^2}}{\tilde{\alpha}_a} \leq  \frac{\hat{\alpha}_a\tilde{\alpha}_a}{2}. 
	  \end{align*} 
	  Finally, Theorem~\ref{thm:ebound_h1} yields the desired result.
\end{proof}


\begin{theorem} \label{thm:errors_s}
	Let $(\vu,p,s,c)$ be the solution of \eqref{eq:main_var_mod} under the assumptions of Section~\ref{sec:wellp}, and $(\vu_h, p_h,s_h,c_h)$ be the solution of \eqref{eq:full_disc_scheme}. If 
	\begin{align*} 	
	& \vu \in L^{\infty}(0,\Tend; \vH_1^{k+1}(\Omega) \cap \vH^1_0(\Omega)),  \quad  s \in L^{\infty}(0,\Tend; H^{k+1}(\Omega) \cap H^1_0(\Omega)),  \\  & s^{ \prime} \in L^{\infty}(0,\Tend; H^{k}(\Omega)), \quad s^{(3)} \in L^2(0,\Tend;L^2(\Omega)),  
	\end{align*} 
	then there exist  constants $C, \gamma_2 >0$, independent of $h$ and $\Dt$, such that for all $m+1 \leq N$
	\begin{align*}
	&\norm{\ePs^{m+1}}^2_{0,\Omega} + \norm{2\ePs^{m+1}-\ePs^{m}}^2_{0,\Omega} + \sum_{n=1}^{m}\norm{\Lambda \ePs^{n+1}}^2_{0,\Omega} 
	+ \sum_{n=1}^{m} \Dt \hat{\alpha}_a \norm{\ePs^{n+1}}^2_{1,\Omega} \\
	&\qquad \qquad \leq C(\Dt^4 + h^{2k}) + \sum_{n=1}^{m} \gamma_2 \Dt \norm{\ePu^{n+1}}^2_{\Thnorm}.
	\end{align*}
\end{theorem}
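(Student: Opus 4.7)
The plan is to mirror the energy argument of Theorem~\ref{thm:errors_u}, but applied this time to the salinity equation. I would take $\varphi_h = \ePs^{n+1}$ in the third equation of \eqref{eq:full_disc_scheme} and $\varphi = \ePs^{n+1}$ in \eqref{lemm2.2eqnsc} at $t=t_{n+1}$, insert the auxiliary terms $\pm\tfrac{1}{2\Dt}(\mathcal{D} s(t_{n+1}),\ePs^{n+1})_{\Omega}$, $\pm\tfrac{1}{2\Dt}(\mathcal{D} \Is^{n+1},\ePs^{n+1})_{\Omega}$ and $\pm a_2(\Is^{n+1},\ePs^{n+1})$, subtract the resulting identities, and multiply by $4\Dt$. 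This produces a balance $J_1+J_2+J_3+J_4+J_5 = 0$ whose terms parallel $I_1,\dots,I_5,I_7$ of the previous proof: $J_1 = 2(\mathcal{D}\ePs^{n+1},\ePs^{n+1})_{\Omega}$ is expanded by \eqref{eq:alg_id} and gives the discrete energy increments; $J_2 = 4\Dt\,a_2(\ePs^{n+1},\ePs^{n+1}) \geq 4\Dt\,\hat{\alpha}_a\norm{\ePs^{n+1}}_{1,\Omega}^2$ by \eqref{eq:coerciv_am}; $J_3 = 4\Dt\bigl(s'(t_{n+1})-\tfrac{1}{2\Dt}\mathcal{D} s(t_{n+1}),\ePs^{n+1}\bigr)_{\Omega}$ is the BDF2 truncation and is controlled by $\Dt^4\,\norm{s^{(3)}}^2_{L^2(t_{n-1},t_{n+1};L^2(\Omega))}$ through the Taylor integral remainder exactly as for $I_3$; and $J_4 = 2(\mathcal{D}\eIs^{n+1},\ePs^{n+1})_{\Omega}$ is treated by inserting $\pm 4\Dt\,\eIs'(t_{n+1})$ and bounded by \eqref{eq:interp1} together with another Taylor remainder to yield $\mathcal{O}(h^{2k}+\Dt^4)$.

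The crux is the convective residual
\begin{align*}
J_5 = 4\Dt\bigl(c_2(\vu(t_{n+1});s(t_{n+1}),\ePs^{n+1}) - c_2(\vu_h^{n+1};s_h^{n+1},\ePs^{n+1})\bigr).
\end{align*}
Using bilinearity of $c_2$ in its first two arguments together with the splittings $\vu_h^{n+1}-\vu(t_{n+1}) = \eIu^{n+1}+\ePu^{n+1}$ and $s_h^{n+1}-s(t_{n+1}) = \eIs^{n+1}+\ePs^{n+1}$, I would decompose
\begin{align*}
& c_2(\vu(t_{n+1});s(t_{n+1}),\ePs^{n+1}) - c_2(\vu_h^{n+1};s_h^{n+1},\ePs^{n+1}) \\
&\quad = -c_2(\eIu^{n+1};s(t_{n+1}),\ePs^{n+1}) - c_2(\ePu^{n+1};s(t_{n+1}),\ePs^{n+1}) \\
&\quad\quad - c_2(\vu_h^{n+1};\eIs^{n+1},\ePs^{n+1}) - c_2(\vu_h^{n+1};\ePs^{n+1},\ePs^{n+1}).
\end{align*}
The last summand vanishes by \eqref{eq:cTSeq0}, since the BDM--discontinuous-pressure pair enforces $\vu_h^{n+1} \in \vX_h \subset \vH_0(\sdiv\!^0;\Omega)$. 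The first and third summands produce $\mathcal{O}(h^{2k})$ residuals (multiplied by regularity norms of $\vu$ and $s$) after invoking the continuity bound \eqref{eq:bound_Cm0}, the interpolation estimates \eqref{eq:interp1} on $\eIu^{n+1}$ and $\eIs^{n+1}$, and the uniform stability of $\norm{\vu_h^{n+1}}_{\Thnorm}$ provided by \eqref{eq:utcs_bounds}. The second summand is precisely the source of the cross coupling: \eqref{eq:bound_Cm0} and Young's inequality give
\begin{align*}
\abs{4\Dt\,c_2(\ePu^{n+1};s(t_{n+1}),\ePs^{n+1})} \leq \tfrac{C\Dt}{\varepsilon}\norm{s}^2_{L^{\infty}(0,\Tend;H^1(\Omega))}\norm{\ePu^{n+1}}_{\Thnorm}^2 + \varepsilon\Dt\,\norm{\ePs^{n+1}}_{1,\Omega}^2.
\end{align*}

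To conclude, I would choose each Young parameter $\varepsilon_i$ proportional to $\hat{\alpha}_a$ so that every $\norm{\ePs^{n+1}}_{1,\Omega}^2$ contribution is absorbed into $J_2$; summing over $1\leq n\leq m$ telescopes $J_1$ into the discrete energy on the left, leaving a boundary remainder $3\norm{\ePs^1}_{0,\Omega}^2$ that is controlled by Theorem~\ref{thm:ebound_h1}. The constant $\gamma_2$ then reads off as $\gamma_2 \sim C\norm{s}^2_{L^{\infty}(0,\Tend;H^1(\Omega))}/\hat{\alpha}_a$. The main anticipated obstacle is the bookkeeping required to ensure that the only term ultimately feeding the $\gamma_2$-weighted residual is $c_2(\ePu^{n+1};s,\ePs^{n+1})$, so that when this inequality is later combined with Theorem~\ref{thm:errors_u} (and its analogue for $\ePc$) the resulting triangle of cross terms admits closure by a discrete Gr\"onwall argument with constants independent of $h$ and $\Dt$.
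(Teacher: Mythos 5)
Your proposal is correct and follows essentially the same route as the paper's proof: the same test functions $\varphi_h=\varphi=\ePs^{n+1}$, the same inserted terms, the identity \eqref{eq:alg_id} for the energy increment, coercivity \eqref{eq:coerciv_am}, the Taylor integral remainder for the BDF2 truncation, and the isolation of $c_2(\vu_h^{n+1};\ePs^{n+1},\ePs^{n+1})=0$ via \eqref{eq:cTSeq0} so that only $c_2(\ePu^{n+1};s,\ePs^{n+1})$ feeds the $\gamma_2$-weighted coupling term, with $\gamma_2\sim \norm{s}^2_{L^{\infty}(0,\Tend;H^1(\Omega))}/\hat{\alpha}_a$ as in the paper. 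The only bookkeeping slips are that your balance $J_1+\cdots+J_5=0$ omits the term $4\Dt\,a_2(\eIs^{n+1},\ePs^{n+1})$ generated by the inserted $\pm a_2(\Is^{n+1},\ePs^{n+1})$ (the paper's $\hat I_5$, routinely bounded by \eqref{eq:cont_a2}, \eqref{eq:interp1} and Young as $\mathcal{O}(h^{2k})$), and that \eqref{eq:utcs_bounds} gives only $\ell^2$-in-time (not uniform) control of $\norm{\vu_h^{n+1}}_{\Thnorm}$ for your term $c_2(\vu_h^{n+1};\eIs^{n+1},\ePs^{n+1})$ — which still suffices once you sum over $n$, since that factor multiplies $h^{2k}$.
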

\begin{proof}
	Proceeding similarly as for Theorem~\ref{thm:errors_u}, we choose as test function $\smash{\varphi_h=\ePs^{n+1}}$ in the second equation of \eqref{eq:full_disc_scheme} and insert suitable additional terms 	
	to obtain the following identity (analogous to \eqref{eq:tmp2_ud}) 
	\begin{align}\label{eq:tmp2_Td} \begin{split} 
	&-\frac{(\mathcal{D} \ePs^{n+1}, \ePs^{n+1})_{\Omega} }{2\Dt}- \frac{ (\mathcal{D} \eIs^{n+1} , \ePs^{n+1})_{\Omega}}{2\Dt} - a_2(\ePs^{n+1},\ePs^{n+1})  + \frac{( \mathcal{D} s(t_{n+1}) , \ePs^{n+1})_{\Omega}}{2\Dt}  \\ & + a_2(\Is(t_{n+1}),\ePs^{n+1}) + c_2(\vu_h^{n+1},s_h^{n+1},\ePs^{n+1}) = 0. \end{split} 
	\end{align} 
	From \eqref{lemm2.2eqnsb}, focusing 
	on $t=t_{n+1}$, using $\smash{\varphi = \ePs^{n+1}}$ and proceeding as in 
	the derivation of \eqref{eq:tmp2_uc}, we obtain 
	\begin{align} \label{eq:tmp2_Tc}  
	\frac{( \mathcal{D} s(t_{n+1}), \ePs^{n+1} )_{\Omega}}{2\Dt} + a_2
	\bigl(s(t_{n+1}),\ePs^{n+1} \bigr)   	+ c_2 \bigl(\vu(t_{n+1}),s(t_{n+1}),\ePs^{n+1} \bigr)   
	  = - \biggl(s'(t_{n+1}) - \frac{ \mathcal{D} s(t_{n+1}) }{2\Dt}, \ePs^{n+1} \biggr)_{\Omega}. 
	\end{align}
	Next we  subtract \eqref{eq:tmp2_Tc} from \eqref{eq:tmp2_Td}  and   multiply both sides by $4\Dt$. This yields 
	 $\hat{I}_1 + \dots + \hat{I}_6 =0$, where 
	\begin{align*} 
	& \hat{I}_1 \coloneqq 2 ( \mathcal{D} \ePs^{n+1} , \ePs^{n+1} )_{\Omega}, 
	\quad \hat{I}_2 \coloneqq   4\Dt a_2  (\ePs^{n+1},\ePs^{n+1} ), \quad 
	  \hat{I}_3 \coloneqq  4\Dt \biggl(s'(t_{n+1}) - \frac{ \mathcal{D} s(t_{n+1}) }{2\Dt}, \ePs^{n+1} \biggr)_{\Omega}, \\ &  \hat{I}_4 \coloneqq  2 \bigl( \mathcal{D} \eIs^{n+1}, \ePs^{n+1}\bigr)_{\Omega}, \quad 
	\hat{I}_5 \coloneqq 4\Dt a_2 \bigl(\eIs^{n+1},\ePs^{n+1} \bigr),  \\ & 
	\hat{I}_6 \coloneqq 4\Dt \bigl(c_2 \bigl(\vu_h^{n+1},s_h^{n+1},\ePu^{n+1} \bigr)-c_2 \bigl(\vu(t_{n+1}),s(t_{n+1}),\ePu^{n+1} \bigr) \bigr). 
	\end{align*}
	For $\hat{I}_1 $, $\hat{I}_2$ and $\hat{I}_3$  we use \eqref{eq:alg_id}, \eqref{eq:coerciv_am}, and Taylor expansion along with Young's inequality, respectively, to obtain
	\begin{align*}
	\hat{I}_1 & = \norm{\ePs^{n+1}}^2_{0,\Omega} + \norm{2\ePs^{n+1}-\ePs^{n}}^2_{0,\Omega} + \norm{\Lambda \ePs^{n+1}}^2_{0,\Omega} 
	- \norm{\ePs^{n}}^2_{0,\Omega} - \norm{2 \ePs^n - \ePs^{n-1}}^2_{0,\Omega},\\
	\hat{I}_2 & \geq 4\Dt \hat{\alpha_a} \norm{\ePs^{n+1}}^2_{1,\Omega}, \qquad 
	\abs{\hat{I}_3}  \leq \frac{\Dt^4}{24 \varepsilon_1} \norm{s^{(3)}}^2_{L^2(t_{n-1},t_{n+1};L^2(\Omega))} + \frac{\Dt \varepsilon_1}{2} \norm{\ePs^{n+1}}^2_{1,\Omega}.
	\end{align*}
	Inserting $\pm 4\Dt \eIs'(t_{n+1})$ into $\hat{I}_4$  and using \eqref{eq:interp1} leads to the bound 
	\begin{align*}
	\abs{\hat{I}_4} &\leq \frac{C}{2\varepsilon_2} h^{2k} \norm{s^{ \prime}}^2_{L^{\infty}(0,\Tend;H^{k}(\Omega))} + \frac{\Dt \varepsilon_2}{2} \norm{\ePs^{n+1}}^2_{1,\Omega} 
	+ \frac{\Dt^4 C}{2 \varepsilon_3} \norm{s^{(3)}}^2_{L^2(0,\Tend;L^2(\Omega))} + \frac{\Dt \varepsilon_3}{2}\norm{\ePs^{n+1}}_{1,\Omega}^2. 
	\end{align*}
	Employing again \eqref{eq:interp1} in combination with \eqref{eq:cont_a2} we have
	\begin{align*}
	\abs{\hat{I}_5} &\leq \frac{8\hat{C}_a^2 \Dt }{\varepsilon_4} h^{2k} \norm{s}^2_{L^{\infty}(0,\Tend;H^{k+1}(\Omega))}+\frac{\Dt \varepsilon_4}{2}\norm{\ePs^{n+1}}^2_{1,\Omega}.
	\end{align*}
	In order to derive a bound for  
	$\hat{I}_6$  we proceed as for the bound
	on $I_7$ in the proof of Theorem~\ref{thm:errors_u}; namely 
	adding and subtracting suitable terms 
	in the definition of~$\hat{I}_6$,  
	defining $\tilde{I}_6$ in this case by 
	\begin{align*} 
	\hat{I}_6 = \tilde{I}_6 + 4 \delta_t c_2 \bigl(\vu_h^{n+1},\ePs^{n+1},\ePs^{n+1} \bigr), 
	\end{align*} 
	and applying \eqref{eq:cTSeq0}, \eqref{eq:bound_Cm0}, \eqref{eq:interp1} and Young's Inequality to the result, we get
	\begin{align*}
	\abs{\tilde{I}_6} 
	\leq & 4\Dt \biggl( \frac{2 \tilde{C}_c^2 C^{*2}}{\varepsilon_5} \norm{\ePu^{n+1}}^2_{\Thnorm} \norm{s}^2_{L^{\infty}(0, \Tend; H^1(\Omega))} + \frac{1}{8\varepsilon_5} \norm{\ePs}^2_{1,\Omega} \\
	&\qquad  + \frac{2h^{2k}\tilde{C}_c^2C^{*2}}{\varepsilon_6}\norm{\vu}^2_{L^{\infty}(0,T;H^1(\Omega))} \norm{s}^2_{L^{\infty}(0,\Tend;H^{k+1}(\Omega))} + \frac{\varepsilon_6}{8} \norm{\ePs^{n+1}}^2_{1,\Omega} \\
	&\qquad + \frac{2 h^{2k} \tilde{C}^2 C^{*2}}{\varepsilon_7}\norm{\vu}^2_{L^{\infty}(0,\Tend;\vH^{k+1}(\Omega))} \norm{s}^2_{L^{\infty}(0,\Tend;H^1(\Omega))} + \frac{\varepsilon_7}{8} \norm{\ePs^{n+1}}^2_{1,\Omega}\biggr).
	\end{align*}
	In this manner, and after choosing $\varepsilon_i= 6 \hat{\alpha}_a /7$ for $i=1, \dots, 7$, we can collect the above estimates and sum over $1\leq n \leq m$, for all $m+1\leq N$, to get 
	\begin{align*}
	& \norm{\ePs^{m+1}}^2_{0,\Omega} + \norm{2\ePs^{m+1}-\ePs^{m}}^2_{0,\Omega} + \sum_{n=1}^{m}\norm{\Lambda \ePs^{n}}^2_{0,\Omega} 
	+ \sum_{n=1}^{m} \Dt \hat{\alpha}_a \norm{\ePs^{n+1}}^2_{1,\Omega}  -3  \norm{\ePs^{1}}^2_{0,\Omega} \\
	& \leq C(\Dt^4 + h^{2k})  +  \gamma_2 \Dt \sum_{n=1}^{m} \norm{\ePu^{n+1}}^2_{\Thnorm}, \quad \text{where} \quad \gamma_2 \coloneqq \frac{28 \tilde{C}_c^2 C^{*2}}{3\hat{\alpha}_a} \norm{s}^2_{L^{\infty}(0,\Tend; H^1(\Omega))}. 
	\end{align*}
	 This concludes the proof. \end{proof}

\begin{theorem} \label{thm:errors_c}
	Let $(\vu,p,s,c)$ be the solution of \eqref{eq:main_var_mod} under the assumptions of Section~\ref{sec:wellp}, and $(\vu_h, p_h,s_h,c_h)$ be the solution of \eqref{eq:full_disc_scheme}. If 
	\begin{align*} 	
	& \vu \in L^{\infty}(0,\Tend; \vH^{k+1}(\Omega) \cap \vH^1_0(\Omega)),  \quad  c \in L^{\infty}(0,\Tend; H^{k+1}(\Omega) \cap H^1_0(\Omega)),  \\  & c^{ \prime} \in L^{\infty}(0,\Tend; H^{k}(\Omega)), \quad c^{(3)} \in L^2(0,\Tend;L^2(\Omega)),  
	\end{align*} 
	then there exist  constants $C, \gamma_3 >0$ that are  independent of $h$ and $\Dt$, such that for all $m+1 \leq N$
	\begin{align*}
	&\norm{\ePc^{m+1}}^2_{0,\Omega} + \norm{2\ePc^{m+1}-\ePc^{m}}^2_{0,\Omega} + \sum_{n=1}^{m}\norm{\Lambda \ePc^{n+1}}^2_{0,\Omega} 
	+ \sum_{n=1}^{m} \Dt \hat{\alpha}_a \norm{\ePc^{n+1}}^2_{1,\Omega} \\
	&\qquad \qquad \leq C(\Dt^4 + h^{2k}) + \sum_{n=1}^{m} \gamma_3 \Dt \norm{\ePu^{n+1}}^2_{\Thnorm}.
	\end{align*}
\end{theorem}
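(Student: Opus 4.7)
The plan is to mimic, nearly verbatim, the strategy used for Theorem~\ref{thm:errors_s}, since equation \eqref{eq:main_var_mod-4} for the particle concentration~$c$ has exactly the same structure as \eqref{eq:main_var_mod-3} for~$s$, up to two modifications: the diffusion coefficient carries an extra factor~$1/\tau$, and the advecting velocity in the trilinear form~$c_2$ is~$\vu-v_{\mathrm{p}}\ve_z$ rather than~$\vu$. Neither of these alters the overall bookkeeping, so I would follow the same sequence of steps: select $\psi_h=\ePc^{n+1}$ in the fourth equation of \eqref{eq:full_disc_scheme}, insert the telescoping terms $\pm(\mathcal{D}c(t_{n+1}),\ePc^{n+1})_{\Omega}/(2\Dt)$, $\pm(\mathcal{D}\Ic(t_{n+1}),\ePc^{n+1})_{\Omega}/(2\Dt)$, and $\pm\frac{1}{\tau}a_2(\Ic(t_{n+1}),\ePc^{n+1})$, subtract \eqref{lemm2.2eqnsd} tested at $t=t_{n+1}$ against $\psi=\ePc^{n+1}$, and multiply by $4\Dt$ to obtain a decomposition $\hat{J}_1+\dots+\hat{J}_6=0$ analogous to the one used for~$s$.

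I would bound each $\hat{J}_i$ exactly as the corresponding $\hat{I}_i$ in the proof of Theorem~\ref{thm:errors_s}: $\hat{J}_1$ by the algebraic identity \eqref{eq:alg_id} producing the discrete energy telescoping; $\hat{J}_2$ by coercivity \eqref{eq:coerciv_am}, which now yields $4\Dt(\hat{\alpha}_a/\tau)\norm{\ePc^{n+1}}_{1,\Omega}^2$ (the factor $1/\tau$ is harmless and only enters the final constant); $\hat{J}_3$ by the Taylor remainder bound
\begin{align*}
\abs*{c'(t_{n+1})-\frac{\mathcal{D}c(t_{n+1})}{2\Dt}}
\leq \frac{\Dt^{3/2}}{2\sqrt{3}}\norm{c^{(3)}}_{L^2(t_{n-1},t_{n+1};L^2(\Omega))},
\end{align*}
combined with Cauchy--Schwarz and Young's inequality; and $\hat{J}_4$, $\hat{J}_5$ by the projection estimates~\eqref{eq:interp1} together with the continuity bound \eqref{eq:cont_a2}, after inserting $\pm 4\Dt\,\eIc'(t_{n+1})$ into $\hat{J}_4$.

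The only genuinely new piece is $\hat{J}_6$, namely the convection difference
\begin{align*}
\hat{J}_6 = 4\Dt\bigl(c_2(\vu_h^{n+1}-v_{\mathrm{p}}\ve_z;c_h^{n+1},\ePc^{n+1})
-c_2(\vu(t_{n+1})-v_{\mathrm{p}}\ve_z;c(t_{n+1}),\ePc^{n+1})\bigr).
\end{align*}
Here I would exploit that $c_2$ is linear in its first argument so the deterministic drift $v_{\mathrm{p}}\ve_z$ cancels in all factors except those involving $c_h^{n+1}-c(t_{n+1})=\ePc^{n+1}+\eIc^{n+1}$; by linearity of $c_2$ the contribution $-c_2(v_{\mathrm{p}}\ve_z;\ePc^{n+1},\ePc^{n+1})$ vanishes by \eqref{eq:cTSeq0} since $v_{\mathrm{p}}\ve_z\in\vH_0(\sdiv^0;\Omega)$, while $-c_2(v_{\mathrm{p}}\ve_z;\eIc^{n+1},\ePc^{n+1})$ is absorbed into a $C v_{\mathrm{p}}^2 h^{2k}$ term by \eqref{eq:bound_Cm0}, \eqref{eq:interp1} and Young's inequality. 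The remaining part splits exactly as in the salinity proof, by adding and subtracting $c_2(\Iu(t_{n+1});\Ic(t_{n+1}),\ePc^{n+1})$, $c_2(\Iu(t_{n+1});c(t_{n+1}),\ePc^{n+1})$ and $c_2(\vu(t_{n+1});\Ic(t_{n+1}),\ePc^{n+1})$, removing the diagonal term $c_2(\vu_h^{n+1};\ePc^{n+1},\ePc^{n+1})=0$, and applying \eqref{eq:bound_Cm0} and \eqref{eq:interp1} to what remains.

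Finally, choosing the $\varepsilon_i$ proportional to $\hat{\alpha}_a/\tau$ so that all $\norm{\ePc^{n+1}}_{1,\Omega}^2$ terms on the right-hand side are absorbed into the coercivity contribution, summing over $1\leq n\leq m$, and bounding the residual $\norm{\ePc^1}_{0,\Omega}^2$ by Theorem~\ref{thm:ebound_h1}, yields the stated bound with
\begin{align*}
\gamma_3 \;\coloneqq\; \frac{C\,\tilde{C}_c^2\,C^{*2}}{\hat{\alpha}_a}\,\norm{c}^2_{L^{\infty}(0,\Tend;H^1(\Omega))},
\end{align*}
up to a suitable absolute constant. The main obstacle I anticipate is purely bookkeeping: tracking the factor $1/\tau$ through the coercivity step and verifying that the auxiliary terms produced by $v_{\mathrm{p}}\ve_z$ in~$\hat{J}_6$ really do fit into the $C(\Dt^4+h^{2k})$ bound rather than producing an unabsorbable $\norm{\ePc^{n+1}}_{1,\Omega}^2$ contribution; both issues resolve because $v_{\mathrm{p}}\ve_z$ is divergence-free and constant in space.
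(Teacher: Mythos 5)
Your proposal is correct and follows essentially the same route as the paper, which proves Theorem~\ref{thm:errors_c} simply by repeating the argument of Theorem~\ref{thm:errors_s} with $s$ replaced by $c$, the extra factor $1/\tau$ tracked through the coercivity step, and the constant $\gamma_3$ given by the analogue of $\gamma_2$ with $\norm{c}_{L^{\infty}(0,\Tend;H^1(\Omega))}$ in place of $\norm{s}_{L^{\infty}(0,\Tend;H^1(\Omega))}$. One pedantic caveat: the constant field $v_{\mathrm{p}}\ve_z$ is not literally in $\vH_0(\sdiv^0;\Omega)$ (its normal trace need not vanish on $\partial\Omega$), but the identity $c_2(v_{\mathrm{p}}\ve_z;\ePc^{n+1},\ePc^{n+1})=0$ still holds because $\ePc^{n+1}\in H_0^1(\Omega)$ kills the boundary term in the integration by parts, so your argument goes through unchanged.
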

\begin{proof}
	It follows along the same lines of the proof of Theorem \ref{thm:errors_s}, with constant $\gamma_3$ given by
	\[
	\gamma_3 = \frac{28 \tilde{C}_c^2 C^{*2}}{3\hat{\alpha}_a} \norm{c}^2_{L^{\infty}(0,\Tend; H^1(\Omega))}.
	\]
\end{proof}

\begin{theorem} \label{thm:errors_ucs}
	Under the same assumptions of Theorems \ref{thm:errors_u}-\ref{thm:errors_c}, there exist positive constants $\hat{\gamma}_u$, $\gamma_{s}$ and $\gamma_c$ independent of $\Dt$ and $h$, such that, for a sufficiently small $\Dt$ and all $m + 1 \leq N$, there hold
	\begin{align*}
	& \Biggl(\norm{\ePu^{m+1}}^2_{0,\Omega} + \norm{2\ePu^{m+1}-\ePu^{m}}^2_{0,\Omega} + \sum_{n=1}^{m} \bigl( \norm{\Lambda \ePu^n}^2_{0,\Omega} 
	+  \Dt \tilde{\alpha}_a \norm{\ePu^{n+1}}^2_{\Thnorm} \bigr) \Biggr)^{1/2} \leq \gamma_u(\Dt^2 + h^{k}), \\
	& \Biggl(\norm{\ePs^{m+1}}^2_{0,\Omega} + \norm{2\ePs^{m+1}-\ePs^{m}}^2_{0,\Omega} + \sum_{n=1}^{m}  \bigl( \norm{\Lambda \ePs^n}^2_{0,\Omega} 
	+   \Dt \hat{\alpha}_a \norm{\ePs^{n+1}}^2_{1,\Omega}\bigr)  \Biggr)^{1/2}  \leq \gamma_s(\Dt^2 + h^{k}), \\
	& \Biggl(\norm{\ePc^{m+1}}^2_{0,\Omega} + \norm{2\ePc^{m+1}-\ePs^{m}}^2_{0,\Omega} + \sum_{n=1}^{m}  \bigl( \norm{\Lambda \ePc^n}^2_{0,\Omega} 
	+   \Dt \hat{\alpha}_a \norm{\ePc^{n+1}}^2_{1,\Omega}\bigr)  \Biggr)^{1/2}  \leq \gamma_c(\Dt^2 + h^{k}).
	\end{align*}
\end{theorem}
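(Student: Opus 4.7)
The three bounds of Theorems \ref{thm:errors_u}--\ref{thm:errors_c} form a coupled system in the cumulative quantities $S_u^m := \sum_{n=1}^m \Dt \norm{\ePu^{n+1}}^2_{\Thnorm}$, $S_s^m := \sum_{n=1}^m \Dt \norm{\ePs^{n+1}}^2_{1,\Omega}$ and $S_c^m := \sum_{n=1}^m \Dt \norm{\ePc^{n+1}}^2_{1,\Omega}$. The right-hand side of the $\ePu$-bound carries $\gamma_1 S_c^m$, those of the $\ePs$- and $\ePc$-bounds carry $\gamma_2 S_u^m$ and $\gamma_3 S_u^m$ respectively, while $S_s^m$ never appears on the right. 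Logically the coupling is the cycle $S_u \to S_c \to S_u$, with $S_s$ slaved to $S_u$; the plan is to break this cycle, propagate the resulting bound on $S_u^m$ back to $S_s^m$ and $S_c^m$, and conclude by taking square roots.

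The decoupling goes as follows. First, Theorem \ref{thm:errors_c} gives $S_c^m \leq [C(\Dt^4+h^{2k}) + \gamma_3 S_u^m]/\hat{\alpha}_a$. Substituting into Theorem \ref{thm:errors_u} yields an inequality of the form $\mathcal{A}_u^m + (\tilde{\alpha}_a - \gamma_1\gamma_3/\hat{\alpha}_a)\, S_u^m \leq C'(\Dt^4 + h^{2k})$, where $\mathcal{A}_u^m$ collects the remaining positive summands on the left. The constraint $\gamma_1 \leq \hat{\alpha}_a\tilde{\alpha}_a/2$ that is baked into Theorem \ref{thm:errors_u} reduces the absorption condition to $\gamma_3 < 2$, which is secured by combining the smallness of $M$ with the continuous stability estimate of Section \ref{sec:wellp} controlling $\norm{c}_{L^\infty(0,\Tend;H^1(\Omega))}$. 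With the resulting bound on $S_u^m$ in hand, Theorems \ref{thm:errors_s} and \ref{thm:errors_c} immediately yield matching bounds on the left-hand quantities for $\ePs$ and $\ePc$.

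If the direct absorption is borderline, step three is instead carried out via a discrete Grönwall lemma applied to an inequality of the shape $a^{m+1} \leq A + B\Dt \sum_{n\leq m+1} a^n$: the condition $B\Dt < 1$ is precisely the ``sufficiently small $\Dt$'' hypothesis and delivers $a^{m+1} \leq A\exp\bigl(BT/(1-B\Dt)\bigr)$, with the exponential prefactor absorbed into the final constants $\gamma_u$, $\gamma_s$, $\gamma_c$. I expect the main obstacle to lie in tracking the interplay between the two smallness conditions: $M$-smallness controls $\gamma_1$ through the Lipschitz constant of $\nu$, while $\gamma_3$ is controlled by solution norms via the continuous stability bounds; both must be compatible with the absorption $\gamma_1\gamma_3 < \tilde{\alpha}_a\hat{\alpha}_a$. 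Once this is pinned down, the remaining manipulations are routine algebra and taking square roots of the three decoupled bounds.
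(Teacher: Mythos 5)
Your proposal is correct and follows essentially the same route as the paper: the paper also breaks the $\ePu$--$\ePc$ cycle by a single substitution exploiting $\gamma_1 \leq \hat{\alpha}_a\tilde{\alpha}_a/2$ (it substitutes the $\ePu$-bound into Theorem~\ref{thm:errors_c} to close on $\sum_n \Dt\norm{\ePc^{n+1}}^2_{1,\Omega}$, whereas you substitute in the opposite direction to close on $\sum_n \Dt\norm{\ePu^{n+1}}^2_{\Thnorm}$ — algebraically the same absorption condition $\gamma_1\gamma_3 < \tilde{\alpha}_a\hat{\alpha}_a$), and then reads off the $\ePs$ and $\ePc$ bounds from Theorems~\ref{thm:errors_s} and~\ref{thm:errors_c}. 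If anything, you are more explicit than the paper about the residual requirement $\gamma_3 < 2$, which the paper leaves implicit.
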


\begin{proof}
	From Theorems \ref{thm:errors_u} and \ref{thm:errors_c}, since $\gamma_1 \leq  \frac{\hat{\alpha}_a\tilde{\alpha}_a}{2}$ we have the estimate 
	\[
	\sum_{n=1}^{m} \Dt \norm{\ePu^{n+1}}^2_{\Thnorm} \leq C(\Dt^4+h^{2k}) + \sum_{n=1}^{m} \Dt \frac{\hat{\alpha}_a}{2}\norm{\ePc^{n+1}}^2_{\Thnorm},
	\]
    which, after substitution back into Theorem~\ref{thm:errors_c}, yields
    \begin{equation} \label{eq:tmp1}
    \sum_{n=1}^{m} \Dt \norm{\ePc^{n+1}}^2_{1,\Omega} \leq C(\Dt^4+h^{2k}). 
    \end{equation}
	The first bound follow by combining \eqref{eq:tmp1} and Theorem \ref{thm:errors_u}. The second and third bounds follow directly from the first bound and Theorems \ref{thm:errors_s} and \ref{thm:errors_c}.
\end{proof}

\begin{lemma}
	Under the same assumptions of Theorem \ref{thm:errors_ucs}, we have
	\begin{align*}
	\left(\sum_{n=1}^{m} \Dt \norm{p(t_{n+1})-p_h^{n+1}}^2_{0,\Omega}\right)^{1/2} \leq \hat{\gamma}_p (\Dt^2 + h^{k}).
	\end{align*}
\end{lemma}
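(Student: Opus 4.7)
The plan is to bound $\ePp^{n+1} = p_h^{n+1} - \IL p(t_{n+1})$ via the discrete inf-sup condition \eqref{eq:infsup_disc}, then recover the full pressure error by the triangle inequality together with the projection estimate \eqref{eq:interp1}. The interpolation contribution is immediate: $\Dt\sum_n \norm{\eIp^{n+1}}_{0,\Omega}^2 \leq C^{*2}h^{2k}\Tend\,\norm{p}_{L^{\infty}(0,\Tend;H^k(\Omega))}^2$ is already of the targeted order after taking square roots.

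For the discrete contribution, I would subtract the first equation of \eqref{eq:full_disc_scheme} from the consistency identity \eqref{lemm2.2eqnsa} at $t=t_{n+1}$, obtaining for every $\vv_h\in\vV_h$
\begin{align*}
b(\vv_h,\ePp^{n+1}) &= -b(\vv_h,\eIp^{n+1}) + \bigl(\vu'(t_{n+1})-\tfrac{1}{2\Dt}\mathcal{D}\vu_h^{n+1},\vv_h\bigr)_{\Omega} + \mathcal{R}_a + \mathcal{R}_c + \mathcal{R}_F,
\end{align*}
where $\mathcal{R}_a$, $\mathcal{R}_c$, $\mathcal{R}_F$ collect the differences of the $a_1^h$, $c_1^h$ and $F$ contributions. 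The $a_1^h$ difference is handled by inserting $\pm a_1^h(c_h^{n+1};\vu(t_{n+1}),\vv_h)$ and using the Lipschitz-in-$c$ bound \eqref{eq:a1lipschitz} together with the continuity estimate of Lemma~\ref{lemma4:ab_bounds}; the $c_1^h$ difference is treated analogously via \eqref{eq:bound_ch1}; the $F$ contribution is bounded by its Lipschitz property. Each of these is of the form (discrete error)$\times\norm{\vv_h}_{1,\Th}$, where the errors involve $\ePu^{n+1}$, $\ePs^{n+1}$, $\ePc^{n+1}$ (controlled by Theorem~\ref{thm:errors_ucs}) and $\eIu^{n+1},\eIs^{n+1},\eIc^{n+1}$ (controlled by \eqref{eq:interp1}); so, squared, $\Dt$-weighted, and summed, they all contribute $O(\Dt^4+h^{2k})$.

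The main obstacle is the BDF2 time-derivative term. Substituting $\vu_h^{n+1}=\vu(t_{n+1})+\eIu^{n+1}+\ePu^{n+1}$ I would split
\[
\vu'(t_{n+1})-\tfrac{1}{2\Dt}\mathcal{D}\vu_h^{n+1} = \bigl[\vu'(t_{n+1})-\tfrac{1}{2\Dt}\mathcal{D}\vu(t_{n+1})\bigr] - \tfrac{1}{2\Dt}\mathcal{D}\eIu^{n+1} - \tfrac{1}{2\Dt}\mathcal{D}\ePu^{n+1}.
\]
A Taylor-with-integral-remainder argument, as in the treatment of $I_3$ in the proof of Theorem~\ref{thm:errors_u}, controls the first bracket at order $\Dt^{3/2}$ in $\ell^2$-in-time $L^2$, yielding an $O(\Dt^4)$ contribution after $\Dt$-weighted summation. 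The interpolation piece is rewritten, using that $\IBDM$ commutes with $\partial_t$, as $(\IBDM\vu-\vu)'(t_{n+1})$ plus a Taylor remainder in $\vu^{(3)}$, and bounded via \eqref{eq:interp1} applied to $\vu'$ plus the Taylor estimate. The delicate piece is $\tfrac{1}{2\Dt}\mathcal{D}\ePu^{n+1}$; because each $\ePu^j\in\vX_h$, so does this discrete derivative, so the pressure term drops when it is used as test function. Substituting $\vv_h=\tfrac{1}{2\Dt}\mathcal{D}\ePu^{n+1}$ into the momentum error equation and combining coercivity \eqref{eq:coerciv_ah}, Young's inequality, and Theorem~\ref{thm:errors_ucs} produces the required $\sum_n \Dt\,\norm{\tfrac{1}{2\Dt}\mathcal{D}\ePu^{n+1}}_{0,\Omega}^2 \leq C(\Dt^4+h^{2k})$.

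Squaring the inf-sup inequality, multiplying by $\Dt$, and summing over $1\le n\le m$, all contributions combine into $\sum_n \Dt\, \norm{\ePp^{n+1}}_{0,\Omega}^2 \leq C(\Dt^4+h^{2k})$; adding the interpolation bound from the first step, taking square roots, and applying the triangle inequality yields the claimed estimate, with $\hat\gamma_p$ depending on $\tilde\beta$, the continuity and Lipschitz constants of the variational forms, and the regularity norms of $\vu,p,s,c$ listed in the hypotheses.
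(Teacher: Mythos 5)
Your overall strategy coincides with the paper's: both proofs invoke the discrete inf-sup condition \eqref{eq:infsup_disc} to produce a test function $\vw_h$ realising the pressure error, subtract the fully discrete momentum equation from the consistency identity of Lemma~\ref{lemma:consistency} at $t=t_{n+1}$, bound the $a_1^h$, $c_1^h$ and $F$ differences exactly as you describe (insertion of intermediate terms, \eqref{eq:a1lipschitz}, \eqref{eq:bound_ch1}, Lipschitz continuity of $F$), and then sum, square and invoke Theorem~\ref{thm:errors_ucs}. The splitting into $\ePp^{n+1}$ plus the projection error $\eIp^{n+1}$ is a cosmetic difference; the paper estimates $p(t_{n+1})-p_h^{n+1}$ directly.

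The genuine gap is in your treatment of the term $\tfrac{1}{2\Dt}\mathcal{D}\ePu^{n+1}$. You claim that testing the momentum error equation with $\vv_h=\tfrac{1}{2\Dt}\mathcal{D}\ePu^{n+1}$ and "combining coercivity \eqref{eq:coerciv_ah}, Young's inequality, and Theorem~\ref{thm:errors_ucs}" yields $\sum_n \Dt\,\norm{\tfrac{1}{2\Dt}\mathcal{D}\ePu^{n+1}}_{0,\Omega}^2 \leq C(\Dt^4+h^{2k})$, but this does not follow from those ingredients. With that test function the left-hand side only controls the $L^2$ norm of the discrete derivative, while the elliptic term becomes the cross term $a_1^h(c_h^{n+1};\ePu^{n+1},\tfrac{1}{2\Dt}\mathcal{D}\ePu^{n+1})$, which by Lemma~\ref{lemma4:ab_bounds} is bounded by $\tilde C_a\norm{\ePu^{n+1}}_{\Thnorm}\norm{\tfrac{1}{2\Dt}\mathcal{D}\ePu^{n+1}}_{\Thnorm}$; the $\Norm_{\Thnorm}$ norm of the discrete derivative cannot be absorbed into an $L^2$ left-hand side without an inverse inequality costing $h^{-1}$. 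The alternative is the standard telescoping identity for $a(\ePu^{n+1},\mathcal{D}\ePu^{n+1})$, but that is complicated here by the BDF2 operator and, more seriously, by the fact that the coefficient $\nu(c_h^{n+1})$ changes with $n$, and it charges the bound with $\norm{\ePu^{1}}_{\Thnorm}^2$, for which Theorem~\ref{thm:ebound_h1} only gives $\Dt\norm{\ePu^1}_{\Thnorm}^2\leq C(h^{2k}+\Dt^4)$ — a loss of one power of $\Dt$. So the claimed auxiliary estimate is not established by the argument you sketch. For what it is worth, the paper's own proof does not carry out this step either: its displayed bound for $-\Dt\bigl(\vu'(t_{n+1})-\tfrac{1}{2\Dt}\mathcal{D}\vu_h^{n+1},\vw_h\bigr)_{\Omega}$ accounts only for the Taylor consistency remainder, leaving the $\mathcal{D}\eIu^{n+1}$ and $\mathcal{D}\ePu^{n+1}$ contributions implicit; you have correctly identified the delicate term, but the device you propose for it does not close the estimate as stated.
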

\begin{proof}
	Owing to the inf-sup condition \eqref{eq:infsup_disc}, there exists  $\vw_h \in \vX_h^{\perp}$ such that
	\begin{align}
	b \bigl(\vw_h, p(t_{n+1})-p_h^{n+1} \bigr) &= \norm{p(t_{n+1})-p_h^{n+1}}^2_{0,\Omega}, \quad  
	\norm{\vw_h}_{\Thnorm} \leq \frac{1}{\tilde{\beta}} \norm{p(t_{n+1})-p_h^{n+1}}_{0,\Omega}. \label{eq:bp2}
	\end{align}
	From \eqref{eq:full_disc_scheme} and Lemma \ref{lemma:consistency}, proceeding as in the proof of Theorem \ref{thm:errors_u}, we obtain
	\begin{align*}
	& \Dt b(\vw_h,p(t_{n+1})-p_h^{n+1}) \\
	&= -\Dt\left(\vu'(t_{n+1}) - \frac{1}{2\Delta t} \mathcal{D} \vu_h^{n+1},\vw_h\right)_{\Omega}  + \Dt \left(a_1^h(c_h^{n+1};\vu_h^{n+1}, \vw_h) - a_1^h(c(t_{n+1});\vu(t_{n+1}), \vw_h) \right) \\& \quad + \Dt \bigl(c_1^h(\vu_h^{n+1};\vu_h^{n+1},\vw_h) - c_1^h(\vu(t_{n+1});\vu(t_{n+1}),\vw_h) \bigr) 	\\
	& \quad + \Dt \bigl( F(s_h^{n+1},c_h^{n+1},\vw_h) - F(s(t_{n+1}),c(t_{n+1}),\vw_h) \bigr)\\
	&\leq \frac{\Dt^2}{2\sqrt{3}} \norm{\vu^{(3)}}_{L^2(t_{n-1},t_{n+1},\vL^2(\Omega))} \sqrt{\Dt} \norm{\vw_h}_{\Thnorm} + \Dt  C_f \norm{\ePs^{n+1}}_{0,\Omega} \norm{\vw_h}_{\Thnorm}
	\\ & \quad  + \tilde{C}_a C^* h^k \Dt \norm{\vu}_{L^{\infty}(0,\Tend;\vH^{k+1}(\Omega))} \norm{\vw_h}_{\Thnorm} 
	+ \tilde{C}_{\mathrm{lip}} M \Dt \norm{\ePc^{n+1}}_{1,\Omega} \norm{\vw_h}_{\Thnorm} \\
	& \quad +  \tilde{C}_{\mathrm{lip}} \Dt M \norm{\ePc^{n+1}}_{1,\Omega} \norm{\vw_h}_{\Thnorm}  + \Dt C \tilde{C}_c C^* C_{\infty} M \norm{\ePc^{n+1}}_{1,\Omega}  \norm{\vw_h}_{\Thnorm}\\ 
	& \quad + 2\Dt C^* \tilde{C}_c h^k \norm{\vu}_{L^{\infty}(0,\Tend;\vH^1(\Omega))} \norm{\vu}_{L^{\infty}(0,\Tend,\vH^{k+1}(\Omega))} \norm{\vw_h}_{\Thnorm}  + \Dt C_f \norm{\ePc^{n+1}}_{0,\Omega} \norm{\vw_h}_{\Thnorm}\\
	& \quad + \Dt C^* C_f h^k \norm{s}_{L^{\infty}(0,\Tend;H^1(\Omega))} \norm{\vw_h}_{\Thnorm} + \Dt C^* C_f h^k \norm{c}_{L^{\infty}(0,\Tend;H^1(\Omega))} \norm{\vw_h}_{\Thnorm}. 
	\end{align*}
	Summing over $1\leq n \leq m$ for all $m+1 \leq N$ and substituting back into  \eqref{eq:bp2}, we obtain
	\begin{align*}
 \left(\sum_{n=1}^{m} \Dt \norm{p(t_{n+1})-p_h^{n+1}}^2_{0,\Omega}\right)^{1/2}\!\!
  \leq \frac{C}{\tilde{\beta}}\Biggl( \Dt^2 + h^{k} + \Biggl( \sum_{n=1}^{m} \Dt \norm{\ePc^{n+1}}^2_{0,\Omega} \Biggr)^{1/2}   + \Biggl( \sum_{n=1}^{m} \Dt \norm{\ePu^{n+1}}^2_{\Thnorm} \Biggr)^{1/2}  \Biggr),
	\end{align*}
	and the desired result readily follows from Theorem \ref{thm:errors_ucs}.
\end{proof}

\bigskip 
We next proceed to derive and analyse {\it a posteriori} error estimators. We split the presentation into three cases of increasing complexity, starting with an estimator focusing on the steady coupled problem.

\section{{\it A posteriori} error estimation for the stationary problem}
\label{sec:aposteriori1}
Let us define the following nonlinear coupled problem in weak form, 
associated with the stationary version of the 
model equations. Find $({\bu}, {p},{s},{c})\in \bH^1_0(\Omega)\times L^2_0(\Omega)\times H^1_0(\Omega)\times H^1_0(\Omega)$ such that
\begin{subequations}\label{elliptic_cons111sta}
\begin{align}
a_1(c,{\bu},\bv)+c_1({\bu};{\bu},\bv)+b(\bv,{p})&=({\ff},\bv)_{0,\Omega}\quad\forall \bv\in\bH^1_0(\Omega)\\
b({\bu},q)&=0\quad \forall q\in L^2_0(\Omega),\\
a_2({s},\phi)+c_2(\bu;{s},\phi)&=(f_1,\phi)_{0,\Omega}\quad \forall\phi\in H^1_0(\Omega),\\
\frac{1}{\tau}a_2({c},\psi)+c_2(\bu-v_p\boldsymbol{e}_z;{c},\psi)&=(f_2,\psi)_{0,\Omega}\quad \forall\psi\in H^1_0(\Omega),
\end{align}
\end{subequations}
where  $\ff=  (\rho / \rho_{\mathrm{m}})\vg=(\alpha s+\beta c)\vg$, $f_1=0$, and $f_2=0$.
Let us also consider its discrete counterpart: Find $({\bu}_h, {p}_h,s_h,c_h)\in \bV_h\times \mathcal{Q}_h\times \mathcal{M}_h\times \mathcal{M}_h$ such that
\begin{subequations}\label{elliptic_cons112sta}
\begin{align}
a_1(c_h,{\bu}_h,\bv)+c_1({\bu}_h;{\bu}_h,\bv)+b(\bv,{p})&=({\ff}_h,\bv)_{0,\Omega}\quad\forall \bv\in \bV_h\\
b({\bu}_h,q)&=0\quad \forall q\in \mathcal{Q}_h,\\
a_2({s}_h,\phi)+c_2(\bu_h;{s}_h,\phi)&=(f_1,\phi)_{0,\Omega}\quad \forall\phi\in \mathcal{M}_h\\
\frac{1}{\tau}a_2({c}_h,\psi)+c_2(\bu_h-v_p\boldsymbol{e}_z;{c}_h,\psi)&=(f_2,\psi)_{0,\Omega}\quad \forall\psi\in \mathcal{M}_h,
\end{align}
\end{subequations}
where $\ff_h=(\alpha s_h+\beta c_h)\vg$, $f_1=0$, and $f_2=0$.

For each $K\in\mathcal{T}_h$ and each $e\in \mathcal{E}_h$ 
we define element-wise and edge-wise residuals  as follows:
\begin{align*}
& \textbf{R}_K \coloneqq \{\ff_h+\nabla\cdot(\nu(c_h)\bnabla\bu_h)-\bu_h\cdot\bnabla\bu_h-(\rho_{\mathrm{m}})^{-1}\nabla p_h\}|_K, \\
& R_{1,K} \coloneqq  \{f_{1}+ \mathrm{Sc}^{-1}\nabla^2 s_h-\bu_h\cdot\nabla s_h\}|_K,\quad 
R_{2,K} \coloneqq \{f_{2}+ (\tau \mathrm{Sc})^{-1}\nabla^2 c_h-(\bu_h-v_p\ve_z)\cdot\nabla c_h\}|_K,\nonumber\\
& \label{eq:RK-Re} \textbf{R}_e \coloneqq \begin{cases}
\frac{1}{2} \djump{((\rho_{\mathrm{m}})^{-1}p_h\vI -\nu(c_h)\bnabla \bu_h)\vn} & \text{for $e\in\mathcal{E}_h\setminus \Gamma$,} \\
0 & \text{for $ e\in \Gamma$,} 
\end{cases}\\
& R_{1,e} \coloneqq \begin{cases}
\frac{1}{2} \djump{(\mathrm{Sc}^{-1}\nabla s_h)\cdot\vn} & \text{for $e\in\mathcal{E}_h\setminus \Gamma$,} \\
0 & \text{for $e\in \Gamma$,} 
\end{cases}, \qquad 
R_{2,e} \coloneqq  \begin{cases}
\frac{1}{2} \djump{((\tau \mathrm{Sc})^{-1}\nabla c_h)\cdot\vn} &  \text{for $e\in\mathcal{E}_h\setminus \Gamma$,} \\
0 & \text{for $e\in \Gamma$.} 
\end{cases} \nonumber
\end{align*}
Then we introduce the element-wise error estimator $\Psi_K^2=\Psi_{R_K}^2+\Psi_{e_K}^2+\Psi_{J_K}^2$ with contributions defined as 
\begin{gather*}
\Psi_{R_K}^2 \coloneqq h_K^2 \bigl(\|\textbf{R}_K\|^2_{0,K}+\|R_{1,K}\|_{0,K}^2+\|R_{2,K}\|_{0,K}^2 \bigr),\\
\Psi_{e_K}^2 \coloneqq \sum_{e\in\partial K}h_e \bigl(\|\textbf{R}_e\|_{0,e}^2+\|R_{1,e}\|^2_{0,e}+\|R_{2,e}\|^2_{0,e}\bigr),\quad 
\Psi_{J_K}^2 \coloneqq \sum_{e\in\partial K}h_e^{-1}\|\djump{\bu_h}\|_{0,e}^2,
\end{gather*}
so a global {\it a posteriori} error estimator for the nonlinear coupled and steady problem \eqref{elliptic_cons112sta} is 
\begin{align}\label{eststa}
\Psi= \left(\sum_{K\in\mathcal{T}_h}\Psi_K^2 \right)^{1/2}. 
\end{align}

\subsection{Reliability}
Let us introduce the space 
$$ \tilde{X}(\mathcal{T}_h)=\{\bv\in\textbf{H}_0(\bdiv,\Omega): \bv\in \bH^1_0(K)\ \forall K\in\mathcal{T}_h\}.
$$
Then, for a fixed $(\tilde{\bu},\tilde{c})\in \tilde{X}(\mathcal{T}_h)\times H^1_0(\Omega)$, we define the bilinear form $\mathcal{A}^{(\tilde{\bu},\tilde{c})}_h(\cdot,\cdot)$ as
\begin{align*}
\mathcal{A}_h^{(\tilde{\bu},\tilde{c})}\bigl((\bu,p,s,c),(\bv,q,\phi,\psi)\bigr) &= \tilde{a}_1^h(\tilde{c},{\bu},\bv)+c_1^h(\tilde{\bu};{\bu},\bv)+b(\bv,p)+b(\bu,q)+a_2({s},\phi)\\
&\quad+c_2(\tilde{\bu};{s},\phi)+\frac{1}{\tau}a_2({c},\psi)+c_2(\tilde{\bu}-v_p\boldsymbol{e}_z;{c},\psi),
\end{align*}
for all  $(\bu,p,s,c),(\bv,q,\phi,\psi)\in \bV_h\times \mathcal{Q}_h\times \mathcal{M}_h\times \mathcal{M}_h$, 
where 
\[
\tilde{a}_1^h(\tilde{c},{\bu},\bv) \coloneqq 
 \int_{\Omega} \bigl(\nu(\tilde{c}) \bnabla_h(\bu) : \bnabla_h(\bv)  \bigr) +
\sum_{e\in\mathcal{E}_h} \int_e \biggl( 
 \frac{a_0}{h_e}\nu(\tilde{c}) \djump{\bu } \cdot \djump{\bv } \biggr).
\]
Note that
${a}_1^h(\tilde{c},{\bu},\bv)=\tilde{a}_1^h(\tilde{c},{\bu},\bv)+K_h(\tilde{c},\bu,\bv)$, where 
\[ 
K_h(\tilde{c},\bu,\bv) \coloneqq \sum_{e\in\mathcal{E}_h} \int_e \biggl( 
-\dmean{\nu(\tilde{c}) \bnabla_h(\bu) \vn_e} \cdot \djump{\bv } - \dmean{\nu(\tilde{c}) \bnabla_h(\bv) \vn_e} \cdot \djump{\bu}\biggr),
\]
and we point out that $\mathcal{A}_h^{(\cdot,\cdot)}\bigl((\bu,p,s,c),(\bv,q,\phi,\psi)\bigr)$ is well-defined also for every $(\bu,p,s,c), (\bv,q,\phi,\psi)\in \bH^1_0(\Omega)\times L^2_0(\Omega)\times H^1_0(\Omega)\times H^1_0(\Omega)$.

\begin{theorem}[Global inf-sup stability]\label{stabspa11}
Let the pair $(\tilde{\bu},\tilde{c})\in \tilde{X}(\mathcal{T}_h)\times H^1_0(\Omega)$ satisfy
$\|\tilde{\bu}\|_{\Thnorm} <M$, for a sufficiently small 
$M>0$. 
For any $(\bu,p,s,c)\in\bH^1_0(\Omega)\times L^2_0(\Omega)\times H^1_0(\Omega)\times H^1_0(\Omega)$, there exists $(\bv,q,\phi,\psi)\in \bH^1_0(\Omega)\times L^2_0(\Omega)\times H^1_0(\Omega)\times H^1_0(\Omega)$ with $\tnorm{(\bv,q,\phi,\psi)}\le 1$ such that
\[
\mathcal{A}_h^{(\tilde{\bu},\tilde{c})}\bigl((\bu,p,s,c),(\bv,q,\phi,\psi)\bigr)\ge C\tnorm{(\bu,p,s,c)},
\]
where  we define 
$\tnorm{(\bv,q,\phi,\psi)}^2:=\|\bv\|_{\Thnorm}^2+\|q\|_{0,\Omega}^2+\|\phi\|_{1,\Omega}^2+\|\psi\|_{1,\Omega}^2$.
\end{theorem}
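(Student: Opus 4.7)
The strategy is to adapt the classical Babu\v{s}ka--Brezzi argument for saddle-point systems to the coupled setting here. I will build the test tuple $(\bv,q,\phi,\psi)$ as a convex combination of two pieces: the first piece activates the coercivity of the elliptic contributions to control $\|\bu\|_{\Thnorm}$, $\|s\|_{1,\Omega}$ and $\|c\|_{1,\Omega}$, while the second recovers control of $\|p\|_{0,\Omega}$ through the continuous inf--sup of $b(\cdot,\cdot)$. The smallness hypothesis on $\|\tilde\bu\|_{\Thnorm}$ will enter only to absorb the non-skew remainders of the trilinear terms, since $\tilde\bu$ lies in $\bH_0(\sdiv;\Omega)$ but is not assumed divergence-free.

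For the first piece, take $(\bv_1,q_1,\phi_1,\psi_1)\coloneqq(\bu,-p,s,c)$; the sign flip on $p$ cancels the two $b$-contributions. Because $\bu\in\bH^1_0(\Omega)$ the jumps vanish and $\|\bu\|_{\Thnorm}=\|\bu\|_{1,\Omega}$, so $\tilde a_1^h(\tilde c;\bu,\bu)=(\nu(\tilde c)\nabla\bu,\nabla\bu)_\Omega\ge\nu_1|\bu|_{1,\Omega}^2$ and Poincar\'e \eqref{eq:poincare} upgrades this to an $\|\bu\|_{\Thnorm}^2$ bound; the coercivities \eqref{eq:coerciv_am} give analogous control on $s$ and $c$. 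For the convective parts, element-wise integration by parts (using $\tilde\bu\cdot\vn$ continuity across interior edges from $\tilde\bu\in\bH(\sdiv)$, vanishing of $\bu,s,c$ on $\partial\Omega$, and the fact that the upwind flux in $c_1^h$ drops out for continuous arguments) yields
\[
c_1^h(\tilde\bu;\bu,\bu)=-\tfrac12\!\int_\Omega(\sdiv\tilde\bu)|\bu|^2,\qquad c_2(\tilde\bu;s,s)=-\tfrac12\!\int_\Omega(\sdiv\tilde\bu)s^2,\qquad c_2(v_p\ve_z;c,c)=0.
\]
H\"older together with the embedding $\bH^1(\Th)\hookrightarrow\bL^4(\Omega)$ bounds the first two terms by $CM\|\bu\|_{\Thnorm}^2$ and $CM\|s\|_{1,\Omega}^2$, which are absorbed into the coercive parts for $M$ small. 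This gives
\[
\mathcal{A}_h^{(\tilde\bu,\tilde c)}\bigl((\bu,p,s,c),(\bv_1,q_1,\phi_1,\psi_1)\bigr)\ge\kappa_1\bigl(\|\bu\|_{\Thnorm}^2+\|s\|_{1,\Omega}^2+\|c\|_{1,\Omega}^2\bigr),
\]
with no control yet on $\|p\|_{0,\Omega}$.

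For the second piece, invoke the continuous inf--sup of $b(\cdot,\cdot)$ to pick $\bv_p\in\bH^1_0(\Omega)$ with $\|\bv_p\|_{1,\Omega}=\|p\|_{0,\Omega}$ and $b(\bv_p,p)\ge\beta\|p\|_{0,\Omega}^2$; set $(\bv_2,q_2,\phi_2,\psi_2)\coloneqq(\bv_p,0,0,0)$. Then Lemma~\ref{lemma4:ab_bounds} together with $\|\tilde\bu\|_{\Thnorm}<M$ gives
\[
\bigl|\tilde a_1^h(\tilde c;\bu,\bv_p)+c_1^h(\tilde\bu;\bu,\bv_p)\bigr|\le C(1+M)\|\bu\|_{\Thnorm}\|p\|_{0,\Omega},
\]
so Young's inequality yields $\mathcal{A}_h^{(\tilde\bu,\tilde c)}\!\bigl((\bu,p,s,c),(\bv_2,q_2,\phi_2,\psi_2)\bigr)\ge\tfrac{\beta}{2}\|p\|_{0,\Omega}^2-C_\ast\|\bu\|_{\Thnorm}^2$. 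Finally, take $(\bv,q,\phi,\psi)=\eta_1(\bv_1,q_1,\phi_1,\psi_1)+\eta_2(\bv_2,q_2,\phi_2,\psi_2)$ with $0<\eta_2\ll\eta_1$ chosen so that $\eta_2C_\ast\le\tfrac12\eta_1\kappa_1$; then the combined evaluation is bounded below by a constant times $\tnorm{(\bu,p,s,c)}^2$, while $\tnorm{(\bv_i,q_i,\phi_i,\psi_i)}\le\tnorm{(\bu,p,s,c)}$ for $i=1,2$. Dividing the whole test tuple by $\tnorm{(\bu,p,s,c)}$ gives $\tnorm{(\bv,q,\phi,\psi)}\le 1$ and the desired lower bound $\ge C\tnorm{(\bu,p,s,c)}$.

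The main obstacle, and the only place the hypothesis $\|\tilde\bu\|_{\Thnorm}<M$ is truly needed, is the loss of the skew-symmetry relation \eqref{eq:c_zero} when $\tilde\bu\notin\bH_0(\sdiv\!^0;\Omega)$: the residual contributions $-\tfrac12\!\int(\sdiv\tilde\bu)|\bu|^2$ and $-\tfrac12\!\int(\sdiv\tilde\bu)s^2$ would otherwise destroy coercivity, and smallness of $M$ is exactly what is required to absorb them into $\tilde a_1^h$ and $a_2$. Everything else is a routine two-test saddle-point construction.
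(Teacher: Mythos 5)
Your proposal is correct and follows essentially the same route as the paper: test with $(\bu,-p,s,c)$ to activate coercivity, use the continuous inf--sup of $b(\cdot,\cdot)$ to build a second test function controlling $\|p\|_{0,\Omega}$, combine the two with a small weight, and rescale to obtain a unit-norm test tuple. The only difference is that you explicitly integrate the convective terms by parts and absorb the resulting $\operatorname{div}\tilde{\bu}$ remainders using the smallness of $M$, a step the paper's proof asserts implicitly when it states the coercivity bound for the diagonal test function.
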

\begin{proof}
For any $(\bu,p,s,c)\in\bH^1_0(\Omega)\times L^2_0(\Omega)\times H^1_0(\Omega)\times H^1_0(\Omega)$  there holds 
\[
\mathcal{A}_h^{(\tilde{\bu},\tilde{c})}\bigl((\bu,p,s,c),(\bu,-p,s,c)\bigr)\ge \alpha_a \|\bu\|_{1,\Omega}^2+\tilde{\alpha}_a\|s\|_{1,\Omega}^2+\frac{\tilde{\alpha}_{a}}{\tau}\|c\|_{1,\Omega}^2.
\]
Applying the inf-sup condition, we get that for any $p\in L^2_0(\Omega)$,  there exists a $\bv\in\bH^1_0(\Omega)$ such that 
$b(\bv,p)\ge \beta \|p\|_{0,\Omega}^2$ and $\|\bv\|_{1,\Omega}\le\|p\|_{0,\Omega}$,
where $\beta>0$ is the inf-sup constant depending only on $\Omega$. Then, we have
\begin{align*}
& \mathcal{A}_h(\tilde{u},\tilde{c})(\bu,p,s,c;v,0,0,0)\\
&=a_1(\tilde{c};\bu,\bv)+c_1(\tilde{\bu};\bu,\bv)+b(\bv,p) \ge \beta\|p\|_{0,\Omega}^2-|a_1(\tilde{c};\bu,\bv)|-|c_1(\tilde{\bu};\bu,\bv)|\\
&\ge \beta\|p\|_{0,\Omega}^2-C_a\|\bu\|_{1,\Omega} \|\bv\|_{1,\Omega}-C_c \|\tilde{\bu}\|_{1,h}\|\bu\|_{1,\Omega}\|\bv\|_{1,\Omega} 
 \ge \beta\|p\|_{0,\Omega}^2-2C_a\|\bu\|_{1,\Omega} \|\bv\|_{1,\Omega}\\
&\ge \beta \|p\|_{0,\Omega}^2-2C_a\|\bu\|_{1,\Omega} \|p\|_{0,\Omega} 
\ge \left(\beta-\frac{1}{\epsilon}\right)\|p\|_{0,\Omega}^2-\epsilon C_a^2 \|\bu\|_{1,\Omega}^2,
\end{align*}
where $\epsilon >0$. Now, we introduce a $\delta>0$ such that
\begin{align*}
\mathcal{A}_h^{(\tilde{\bu},\tilde{c})}\bigl((\bu,p,s,c),(\bu+\delta \bv,-p,s,c)\bigr)&=\mathcal{A}_h^{(\tilde{\bu},\tilde{c})}(\bu,p,s,c,\bu,-p,s,c)+\delta\mathcal{A}_h^{(\tilde{\bu},\tilde{c})}(\bu,p,s,c,\bv,0,0,0)\\
&\ge (\alpha_a-\delta\epsilon C_a^2)\|\bu\|_{1,\Omega}^2+\delta \left(\beta-\frac{1}{\epsilon}\right)\|p\|_{0,\Omega}^2+\tilde{\alpha}\|s\|_{1,\Omega}^2+\frac{\tilde{\alpha}}{\tau}\|c\|_{1,\Omega}^2.
\end{align*}
Choosing $\epsilon=2/ \beta$ and $\delta= \alpha_a/(2\epsilon C_a^2)$, we obtain
\begin{align*}
\mathcal{A}_h^{(\tilde{\bu},\tilde{c})}(\bu,p,s,c,\bu+\delta \bv,-p,s,c)&\ge \frac{\alpha_a}{2} \|\bu\|_{1,\Omega}^2+\frac{\beta}{2}\|p\|_{0,\Omega}^2+\tilde{\alpha}\|s\|_{1,\Omega}^2+\frac{\tilde{\alpha}}{\tau}\|c\|_{1,\Omega}^2\\
&\ge \min \biggl\{\frac{\alpha_a}{2},\frac{\beta}{2},\tilde{\alpha},\frac{\tilde{\alpha}}{\tau} \biggr\} \bigl(\|\bu\|_{1,\Omega}^2+\|p\|_{0,\Omega}^2+\|s\|_{1,\Omega}^2+\|c\|_{1,\Omega}^2 \bigr).
\end{align*}
Finally, using triangle inequality, the following relations hold: 
\begin{align*}
\tnorm{(\bu+\delta \bv,-p,s,c)}^2&=\|\bu+\delta \bv\|^2_{1,\Omega}+\|p\|_{0,\Omega}^2+\|s\|_{1,\Omega}^2+\|c\|_{1,\Omega}^2\\
&\le 2 \bigl(\|\bu\|^2_{1,\Omega}+\delta^2\|\bv\|^2_{1,\Omega} \bigr)+\|p\|_{0,\Omega}^2+\|s\|_{1,\Omega}^2+\|c\|_{1,\Omega}^2\\
&\le \max\{2,(1+2\delta^2)\} \bigl(\|\bu\|^2_{1,\Omega}+\|p\|_{0,\Omega}^2+\|s\|_{1,\Omega}^2+\|c\|_{1,\Omega}^2 \bigr).
\end{align*}
This concludes the proof. 
\end{proof}

Next, we decompose the $\bH(\div)$-conforming velocity approximation uniquely into
$\bu_h=\bu_h^c+\bu_h^r$, 
where $\bu_h^c\in\bV_h^c$ and $\bu_h^r\in(\bV_h^c)^{\bot}$, and we note that $\bu^r_h=\bu_h-\bu^c_h\in\bV_h$.
\begin{lemma}
There holds
\[
\|\bu^r_h\|_{\Thnorm}\le {C}_r \left(\sum_{K\in\mathcal{T}_h}\Psi_{J_K}^2 \right)^{1/2}.
\]
\end{lemma}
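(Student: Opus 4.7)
The plan is to identify $\bu_h^r$ with the distance from $\bu_h$ to the conforming subspace $\bV_h^c$ and then invoke an Oswald-type averaging estimate that controls this distance by the interelement jumps. By construction $\bu_h^c$ is the orthogonal projection of $\bu_h$ onto $\bV_h^c$, so the best-approximation property gives
\[
\|\bu_h^r\|_{\Thnorm} = \|\bu_h - \bu_h^c\|_{\Thnorm} \le C\, \|\bu_h - E_h \bu_h\|_{\Thnorm}
\]
for any candidate $E_h\bu_h \in \bV_h^c$; the constant equals $1$ when the underlying inner product is $\|\cdot\|_{\Thnorm}$-inducing, and otherwise is an $h$-independent equivalence constant that follows from shape regularity and the finite dimensionality of $\bV_h$.

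First I would construct an averaging (enrichment) operator $E_h\colon \bV_h \to \bV_h^c \subset \bH_0^1(\Omega)$ of Oswald type: for every BDM degree of freedom shared between two neighbouring elements, assign the arithmetic mean of the two element-wise values, while the degrees of freedom located on $\partial\Omega$ are set to zero so as to enforce the homogeneous boundary condition. Since $\bu_h \in \bV_h \subset \bH(\sdiv;\Omega)$ has single-valued normal traces across interior edges, the averaging only modifies the tangential components.

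Next, the key elementwise enrichment estimate, derived by standard scaling arguments on the reference simplex in the spirit of classical Oswald and Karakashian--Pascal-type bounds, reads
\[
\|\bv_h - E_h\bv_h\|_{L^2(K)}^2 + \|\bnabla_h(\bv_h - E_h\bv_h)\|_{L^2(K)}^2 \le C \sum_{e \in \partial K} h_e^{-1} \|\djump{\bv_h}\|_{0,e}^2,
\]
valid for every $\bv_h \in \bV_h$. Summing over $K \in \Th$, recalling that $E_h\bu_h \in \bV_h^c$ has vanishing jumps across interior edges and vanishing trace on $\partial\Omega$, and invoking the convention $\djump{\bu_h} = \bu_h$ on boundary edges, one obtains
\[
\|\bu_h - E_h\bu_h\|_{\Thnorm}^2 \le C \sum_{K \in \Th} \Psi_{J_K}^2.
\]
Combined with the best-approximation inequality, this yields the claim with an $h$-independent constant $C_r>0$.

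The main obstacle is the careful realisation of $E_h$ for BDM elements of arbitrary order $k \ge 1$ on shape-regular simplicial meshes and the verification of the elementwise enrichment bound above. The crucial observation is that $\bv_h - E_h\bv_h$ is supported only on those degrees of freedom that have been modified by the averaging, which are precisely the ones associated with edges (or faces, in three dimensions) where $\djump{\bv_h}$ does not vanish; a reference-element scaling then produces the weight $h_e^{-1}$. A secondary technical point is the compatibility of the orthogonality defining $\bu_h^c$ with the $\|\cdot\|_{\Thnorm}$ norm, which is handled by a standard norm-equivalence argument on the finite-dimensional space $\bV_h$.
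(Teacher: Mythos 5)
Your proposal is correct and follows exactly the route the paper intends: the paper's own ``proof'' is a one-line appeal to the decomposition $\bu_h=\bu_h^c+\bu_h^r$ and the jump residual, and your Oswald-averaging argument (best approximation in $\bV_h^c$ plus the elementwise enrichment bound controlled by $h_e^{-1}\|\djump{\bu_h}\|_{0,e}^2$) is the standard way to make that one-liner rigorous, consistent with the references \cite{karakashian1998,georgoulis11} the paper leans on. The only caveat worth noting is your side remark that an $h$-independent norm-equivalence constant ``follows from finite dimensionality'' --- finite dimensionality alone does not give $h$-uniform constants --- but this is immaterial since the orthogonality can be (and should be read as being) taken in the inner product inducing $\Norm_{\Thnorm}$, for which the best-approximation constant is $1$.
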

\begin{proof} It follows straightforwardly from the decomposition $\bu_h=\bu_h^c+\bu_h^r$   and 
from the edge residual.\end{proof}

\begin{lemma}\label{rel1.1}
If $\|\bu\|_{1,\infty}< M$, $\|s\|_{\infty}<M$ and $\|c\|_{\infty}<M$, then the following estimate holds:
\begin{align*}
\frac{C}{2}\tnorm{(\boldsymbol{e}^{\bu},e^p,e^s,e^c)} &\le \int_{\Omega} (\ff-\ff_h)\cdot\bv+\int_{\Omega} \ff_h\cdot(\bv-\bv_h)+\int_{\Omega} f_1(\phi-\phi_h)+\int_{\Omega}f_2(\psi-\psi_h)+K_h(\bu_h,\bv_h)\\
&\quad-\mathcal{A}_h^{({\bu}_h,{c}_h)}(\bu_h,p_h,s_h,c_h,\bv-\bv_h,q,\phi-\phi_h,\psi-\psi_h)
  +(1+C)C_r \left(\sum_{K\in\mathcal{T}_h}\Psi_{j_K}^2 \right)^{1/2}.
\end{align*}
\end{lemma}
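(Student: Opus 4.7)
The plan is to invoke the global inf-sup stability (Theorem~\ref{stabspa11}) for $\mathcal{A}_h^{(\bu_h,c_h)}$ linearised at the discrete solution, rewrite the resulting residual through the continuous and discrete equations \eqref{elliptic_cons111sta}--\eqref{elliptic_cons112sta}, and finally absorb a small perturbation using the hypotheses $\|\bu\|_{1,\infty},\|s\|_\infty,\|c\|_\infty<M$. Since $\bu_h\in\bV_h$ is not $\bH^1$-conforming, I would first use the decomposition $\bu_h=\bu_h^c+\bu_h^r$ stated just above the lemma, set $\tilde{\boldsymbol{e}}^{\bu}\coloneqq \bu-\bu_h^c\in\bH^1_0(\Omega)$, and apply Theorem~\ref{stabspa11} to the tuple $(\tilde{\boldsymbol{e}}^{\bu},e^p,e^s,e^c)$ to produce test data $(\bv,q,\phi,\psi)$ with $\tnorm{(\bv,q,\phi,\psi)}\le 1$ and
\[
C\,\tnorm{(\tilde{\boldsymbol{e}}^{\bu},e^p,e^s,e^c)}\le \mathcal{A}_h^{(\bu_h,c_h)}\bigl((\tilde{\boldsymbol{e}}^{\bu},e^p,e^s,e^c),(\bv,q,\phi,\psi)\bigr).
\]
Combining the triangle inequality $\tnorm{(\boldsymbol{e}^{\bu},\ldots)}\le \tnorm{(\tilde{\boldsymbol{e}}^{\bu},\ldots)}+\|\bu_h^r\|_{\Thnorm}$, the linearity identity $\mathcal{A}_h^{(\bu_h,c_h)}((\tilde{\boldsymbol{e}}^{\bu},\ldots),\cdot)=\mathcal{A}_h^{(\bu_h,c_h)}((\boldsymbol{e}^{\bu},e^p,e^s,e^c),\cdot)+\mathcal{A}_h^{(\bu_h,c_h)}((\bu_h^r,0,0,0),\cdot)$, continuity of $\mathcal{A}_h^{(\bu_h,c_h)}$ applied to the second piece, and the preceding lemma $\|\bu_h^r\|_{\Thnorm}\le C_r(\sum_K \Psi_{J_K}^2)^{1/2}$, I recover the $(1+C)C_r(\sum_K\Psi_{J_K}^2)^{1/2}$ summand in the conclusion.

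The core computation is to rewrite $\mathcal{A}_h^{(\bu_h,c_h)}((\boldsymbol{e}^{\bu},e^p,e^s,e^c),(\bv,q,\phi,\psi))$ as the difference $\mathcal{A}_h^{(\bu_h,c_h)}((\bu,p,s,c),\ldots)-\mathcal{A}_h^{(\bu_h,c_h)}((\bu_h,p_h,s_h,c_h),\ldots)$. On the continuous side, since $\bu,\bv\in\bH^1_0(\Omega)$ all jump-penalty and upwind contributions in $\tilde{a}_1^h$ and $c_1^h$ vanish; comparing with \eqref{elliptic_cons111sta} after adding and subtracting the true nonlinear coefficients $\nu(c)$ and $\bu$ yields the clean contribution $(\ff,\bv)_\Omega+(f_1,\phi)_\Omega+(f_2,\psi)_\Omega$ together with four perturbations $((\nu(c_h)-\nu(c))\bnabla\bu,\bnabla\bv)_\Omega$, $((\bu_h-\bu)\cdot\bnabla\bu,\bv)_\Omega$, $((\bu_h-\bu)\cdot\bnabla s,\phi)_\Omega$ and $((\bu_h-\bu)\cdot\bnabla c,\psi)_\Omega$. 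By the Lipschitz property of $\nu$ and the $L^\infty$ smallness of $\bu,s,c$, each of these is bounded by $CM\,\tnorm{(\boldsymbol{e}^{\bu},e^p,e^s,e^c)}$, and for $M$ sufficiently small they absorb on the left-hand side of the inf-sup, which is what turns the constant $C$ into $C/2$.

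On the discrete side I would choose quasi-interpolants $\bv_h\in\bV_h$ and $\phi_h,\psi_h\in\mathcal{M}_h$ of $\bv,\phi,\psi$, and split $(\bv,q,\phi,\psi)=(\bv_h,0,\phi_h,\psi_h)+(\bv-\bv_h,q,\phi-\phi_h,\psi-\psi_h)$, using $b(\bu_h,q)=0$ to keep $q$ entirely in the non-interpolated part. Plugging $(\bv_h,0,\phi_h,\psi_h)$ into $\mathcal{A}_h^{(\bu_h,c_h)}((\bu_h,p_h,s_h,c_h),\cdot)$ and invoking $a_1^h=\tilde{a}_1^h+K_h$ together with the discrete equations \eqref{elliptic_cons112sta} produces exactly $(\ff_h,\bv_h)_\Omega-K_h(\bu_h,\bv_h)+(f_1,\phi_h)_\Omega+(f_2,\psi_h)_\Omega$; the remainder is precisely the term $\mathcal{A}_h^{(\bu_h,c_h)}(\bu_h,p_h,s_h,c_h,\bv-\bv_h,q,\phi-\phi_h,\psi-\psi_h)$ that appears with a minus sign in the statement. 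Regrouping the source contributions as $(\ff-\ff_h,\bv)_\Omega+(\ff_h,\bv-\bv_h)_\Omega+(f_1,\phi-\phi_h)_\Omega+(f_2,\psi-\psi_h)_\Omega$ then closes the identity. The step I expect to be the main obstacle is the absorption: verifying that each of the four continuous-side perturbations is genuinely of order $M\tnorm{\boldsymbol{e}}$ requires Hölder estimates together with the discrete Sobolev embedding \eqref{eq:discr_emb} to reduce $L^p$-factors of the test functions back to $\tnorm{\cdot}$- or $\norm{\cdot}_{1,\Omega}$-norms, and $M$ must be chosen small enough relative to the inf-sup constant of Theorem~\ref{stabspa11} to leave a strictly positive fraction on the left.
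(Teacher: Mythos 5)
Your proposal is correct and follows essentially the same route as the paper's proof: the decomposition $\bu_h=\bu_h^c+\bu_h^r$ with the triangle inequality and the jump-residual bound on $\|\bu_h^r\|_{\Thnorm}$, the global inf-sup of Theorem~\ref{stabspa11} applied to the conforming error, the switch of linearisation point from $(\bu_h,c_h)$ to $(\bu,c)$ at the cost of four perturbation terms controlled by $M$ and absorbed for $M$ small, and finally the rewriting via the continuous and discrete equations (with $a_1^h=\tilde a_1^h+K_h$ producing the $K_h(\bu_h,\bv_h)$ term). The bookkeeping of signs and the emergence of the $(1+C)C_r(\sum_K\Psi_{J_K}^2)^{1/2}$ summand match the paper's argument.
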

\begin{proof}
Using $\bu_h=\bu_h^c+\bu_h^r$ and the triangle inequality imply
\begin{align*}
\tnorm{(\boldsymbol{e}^{\bu},e^p,e^s,e^c)}&\le \tnorm{(\boldsymbol{e}^{\bu}_c,e^p,e^s,e^c)} +\|{\bu}_h^r\|_{\Thnorm}
\le\tnorm{(\boldsymbol{e}^{\bu}_c,e^p,e^s,e^c)}+{C}_r \left(\sum_{K\in \mathcal{T}_h}\Psi_{J_K}^2 \right)^{1/2}.
\end{align*}
Then, Theorem \ref{stabspa11} gives
\begin{align*}
C\tnorm{(\boldsymbol{e}_{c}^{\bu},e^p,e^s,e^c)} 
&\le \mathcal{A}_h^{({\bu_h},{c}_h)}(\boldsymbol{e}^{\bu},e^p,e^s,e^c; \bv,q,\phi,\psi)+\mathcal{A}_h^{({\bu_h},{c}_h)}({\bu}_h^r,0,0,0; \bv,q,\phi,\psi)\\
&\le \mathcal{A}_h^{({\bu_h},{c}_h)}(\boldsymbol{e}^{\bu},e^p,e^s,e^c; \bv,q,\phi,\psi)+C_r \left(\sum_{K\in\mathcal{T}_h}\Psi_{J_K}^2 \right)^{1/2}.
\end{align*}
Owing to the relation 
\begin{align*}
 \mathcal{A}_h^{({\bu_h},{c}_h)}(\bu,p,s,c; \bv,q,\phi,\psi)&= \mathcal{A}_h^{({\bu},{c})}(\bu,p,s,c; \bv,q,\phi,\psi)-a_{1}(c;\bu,\bv)+a_1(c_h;\bu,\bv)\\
 &\quad-c_1(e^{\bu};\bu,\bv)-c_2(e^{\bu};s,\phi)-c_2(e^{\bu};c,\psi),
\end{align*}
we then have
\begin{align*}
 C\tnorm{(\boldsymbol{e}^{\bu},e^p,e^s,e^c)} & \le C\tnorm{(\boldsymbol{e}_{c}^{\bu},e^p,e^s,e^c)}+C_r 
 \left(\sum_{K\in\mathcal{T}_h}\Psi_{J_K}^2 \right)^{1/2}\\
& \le\mathcal{A}_h^{({\bu},{c})}(\bu,p,s,c; \bv,q,\phi,\psi)-a_{1}({c};\bu,\bv)+a_1(c_h,\bu,\bv)-c_1(e^{\bu};\bu,\bv)-c_2(e^{\bu};s,\phi)\\
 &\quad-c_2(e^{\bu};c,\psi)-\mathcal{A}_h({\bu}_h,{c}_h)(\bu_h,p_h,s_h,c_h; \bv,q,\phi,\psi) +(1+C)C_r
  \left(\sum_{K\in\mathcal{T}_h}\Psi_{J_K}^2 \right)^{1/2},
\end{align*}
while using the properties 
\begin{gather*}
\bigl|a_1(c;\bu,\bv)-a_1(c_h;\bu,\bv) \bigr|\le C_1\|c-c_h\|_{1} \|\bu\|_{1,\infty}\|\bv\|_1 \le C_1M\|e^c\|_1,\\
c_1(e^{\bu};\bu,\bv)\le \ C_2M\|e^{\bu}\|_{\Thnorm},\quad 
c_2(e^{\bu};s,\phi)\le C_3 M \|e^{\bu}\|_{\Thnorm},\quad 
c_2(e^{\bu};c,\psi)\le C_4 M \|e^{\bu}\|_{\Thnorm} 
\end{gather*}
yields the bound
\begin{align*}
C\tnorm{(\boldsymbol{e}^{\bu},e^p,e^s,e^c)} &\le \mathcal{A}_h^{({\bu},{c})}(\bu,p,s,c; \bv,q,\phi,\psi)-\mathcal{A}_h({\bu}_h,{c}_h)(\bu_h,p_h,s_h,c_h; \bv,q,\phi,\psi)\\
&\quad+(1+C)C_r \left(\sum_{K\in\mathcal{T}_h}\Psi_{J_K}^2 \right)^{1/2} -(C_1+C_2+C_3+C_4)M \tnorm{(\boldsymbol{e}^{\bu},e^p,e^s,e^c)}.
\end{align*}
Moreover, we have
\begin{align}\label{eqstab11}
\frac{C}{2}\tnorm{(\boldsymbol{e}^{\bu},e^p,e^s,e^c)}&\le \int_{\Omega}\ff \cdot\bv+\int_{\Omega}f_1\phi +\int_{\Omega}f_2\psi-\mathcal{A}_h^{({\bu}_h,{c}_h)}(\bu_h,p_h,s_h,c_h; \bv,q,\phi,\psi)\nonumber\\
&\quad+(1+C)C_r \left(\sum_{K\in\mathcal{T}_h}\Psi_{j_K}^2 \right)^{1/2},
\end{align}
and we readily see that after stating the discrete problem as 
\begin{align*}
a_1^h(c_h;\bu_h,\bv_h)+c_1^h(\bu_h;\bu_h,\bv_h)+b(\bv_h,p_h)-\int_{\Omega}\ff_h \cdot\bv_h&=0,\quad \forall\bv_h\in\textbf{V}_h,\\
a_2(s_h,\phi_h)+c_2(\bu_h;s_h,\phi_h)-\int_{\Omega}f_1\phi_h&=0\quad\forall \phi_h\in\mathcal{M}_h,\\
\frac{1}{\tau}a_2(c_h,\psi_h)+c_2(\bu_h-v_pe_z;c_h,\psi_h)- \int_{\Omega}f_2\psi_h&=0\quad\forall \psi_h\in\mathcal{M}_h,
\end{align*}
and employing~\eqref{eqstab11}, the sought results follow.
\end{proof}

\begin{lemma}\label{rel1.2}
For $(\bv,q,s,c)\in \bH^1_0(\Omega)\times L^2_0(\Omega)\times H^1_0(\Omega)\times H^1_0(\Omega)$, there are $\bv_h\in \textbf{V}_h$, $s_h\in \mathcal{M}_h$ and $c_h\in \mathcal{M}_h$ such that
\begin{align}
\int_{\Omega}&(\ff-\ff_h)\cdot\bv+ \int_{\Omega} \ff_h\cdot(\bv-\bv_h)+\int_{\Omega} f_1 (\phi-\phi_h)+\int_{\Omega}f_2(\psi-\psi_h)\nonumber\\
&-\mathcal{A}_h^{({u}_h,{c}_h)}(\bu_h,p_h,s_h,c_h,\bv-\bv_h,q,\phi-\phi_h,\psi-\psi_h)\le C \bigl(\Psi+\|\ff-\ff_h\|_{0,\Omega} \bigr) 
 \tnorm{(\bv,q,s,c)}. \label{eq:aux33}
\end{align}
 \end{lemma}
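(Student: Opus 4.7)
The plan is to choose suitable quasi-interpolants, integrate by parts element-by-element, and then bound each residual contribution by the corresponding component of~$\Psi$.

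First I would take $\bv_h \in \bV_h$ to be a $\bH(\mathrm{div})$-conforming BDM quasi-interpolant of~$\bv$ (e.g., the construction of Karakashian--Pascal combined with the canonical BDM projection) so that $\bv_h$ satisfies the standard local approximation bounds
\begin{align*}
\norm{\bv-\bv_h}_{0,K} &\le C h_K \snorm{\bv}_{1,\omega_K}, \qquad \norm{\bv-\bv_h}_{0,e} \le C h_e^{1/2}\snorm{\bv}_{1,\omega_e},
\end{align*}
where $\omega_K$ and $\omega_e$ denote the usual patches. Analogously, $\phi_h,\psi_h\in\mathcal{M}_h$ are taken as Scott--Zhang interpolants of~$\phi,\psi$ (preserving the zero boundary trace), satisfying the same type of local estimates. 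This is the step that drives everything and whose technicalities are standard.

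Next I would rewrite the left-hand side of~\eqref{eq:aux33} by splitting $\mathcal{A}_h^{(\bu_h,c_h)}(\bu_h,p_h,s_h,c_h;\cdot)$ into its constituent forms $\tilde{a}_1^h,\, c_1^h,\, b,\, a_2,\, c_2$, and then performing integration by parts element-by-element. The strong residuals $\textbf{R}_K, R_{1,K}, R_{2,K}$ appear as element volume contributions, while the facet integrals regroup into the jump residuals $\textbf{R}_e, R_{1,e}, R_{2,e}$ (using $\dmean{\cdot}/\djump{\cdot}$ algebra, and the fact that $\bv,\phi,\psi$ are single-valued so that only the averages survive on interior faces). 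The data-approximation term $\int_\Omega (\ff-\ff_h)\cdot\bv$ is left untouched and directly yields $\norm{\ff-\ff_h}_{0,\Omega}\norm{\bv}_{0,\Omega}$.

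Then for each term I apply Cauchy--Schwarz and the local interpolation estimates to get bounds of the form
\begin{align*}
\sum_{K\in\Th}\int_K \textbf{R}_K\cdot(\bv-\bv_h) &\le \Bigl(\sum_{K}h_K^2\norm{\textbf{R}_K}_{0,K}^2\Bigr)^{1/2}\!\Bigl(\sum_K h_K^{-2}\norm{\bv-\bv_h}_{0,K}^2\Bigr)^{1/2} \le C\,\Psi\,\snorm{\bv}_{1,\Omega},
\end{align*}
and similarly for the facet residuals using the trace/jump bounds, and for the transport/diffusion residuals $R_{1,K}, R_{2,K}, R_{1,e}, R_{2,e}$ acting on $\phi-\phi_h$ and $\psi-\psi_h$. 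The consistency term $K_h(c_h,\bu_h,\bv-\bv_h)$ from rewriting $a_1^h=\tilde{a}_1^h+K_h$ combines with the penalty jump term and with $\djump{\bu_h}$ to yield the $\Psi_{J_K}$ contribution, via the discrete trace inequality.

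The hardest bookkeeping step will be the DG consistency/penalty combination: tracking how the penalty, the averages of the viscous flux $\dmean{\nu(c_h)\bnabla\bu_h\vn_e}$, and the interior-face contribution from integrating by parts on $\bu_h$ recombine cleanly into precisely $\textbf{R}_e$ and $\Psi_{J_K}$. Once this is done, summing the estimates over~$K$ and~$e$, using the finite overlap of the patches $\omega_K,\omega_e$ and the obvious bound $\norm{\bv}_{0,\Omega}\le\norm{\bv}_{\Thnorm}\le\tnorm{(\bv,q,\phi,\psi)}$ (and similarly for $\phi,\psi,q$) delivers the stated inequality with the constant absorbed into~$C$.
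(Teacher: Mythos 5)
Your proposal follows essentially the same route as the paper's proof: choose quasi-interpolants with standard local approximation properties, integrate by parts element-by-element to expose the volume residuals $\textbf{R}_K, R_{1,K}, R_{2,K}$ and regroup the facet terms into $\textbf{R}_e, R_{1,e}, R_{2,e}$ (plus the upwind/jump contribution controlled by $\Psi_{J_K}$), then apply Cauchy--Schwarz with the interpolation bounds and sum. The only cosmetic difference is that the form $\mathcal{A}_h^{(\bu_h,c_h)}$ in the statement is already built from $\tilde{a}_1^h$ without the consistency part $K_h$, so the $K_h$-bookkeeping you flag as the hardest step does not actually arise inside this lemma; otherwise the argument matches the paper's.
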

 \begin{proof}
 Using integration by parts gives
 \begin{align}
 \int_{\Omega}(\ff-\ff_h)\cdot\bv&+\int_{\Omega} \ff_h\cdot(\bv-\bv_h)+\int_{\Omega} f_1 (\phi-\phi_h)+\int_{\Omega}f_2(\psi-\psi_h)\nonumber\\
&-\mathcal{A}_h^{({u}_h,{c}_h)}(\bu_h,p_h,s_h,c_h,\bv-\bv_h,q,\phi-\phi_h,\psi-\psi_h)=T_1+ \cdots +T_5,
 \end{align}
 where we define the terms 
 \begin{align*}
 T_1& \coloneqq  \sum_{K\in\mathcal{T}_h}\int_{K} \biggl(\ff_h+\nabla\cdot \bigl(\nu(c_h)\bnabla\bu_h \bigr)-\bu_h\cdot\bnabla\bu_h-
 \frac{1}{\rho_{\mathrm{m}}} \nabla p_h \biggr)\cdot(\bv-\bv_h) \, \mathrm{d} \bx+\int_{\Omega}(\ff-\ff_h)\cdot\bv \, \mathrm{d} \bx,\\
 T_2&\coloneqq \sum_{K\in\mathcal{T}_h}\int_{\partial K} \biggl( \biggl(  \frac{1}{\rho_{\mathrm{m}}}p_h\vI-\nu(c_h)\bnabla\bu_h\biggr)\cdot\vn_K \biggr)\cdot(\bv-\bv_h) \, \mathrm{d}S ,\\
 T_3&\coloneqq \sum_{K\in\mathcal{T}_h}\int_{\partial K_{in}\setminus \Gamma}\bu_h\cdot\vn_K(\bu_h-\bu_h^e)\cdot(\bv-\bv_h) \,
  \mathrm{d} S,\\
 T_4&\coloneqq\sum_{K\in\mathcal{T}_h}\int_{K} \biggl(f_1+\frac{1}{\mathrm{Sc}}\nabla^2 s_h-\bu_h\cdot\nabla s_h \biggr)(\phi-\phi_h)
  \, \mathrm{d} \bx+\sum_{K\in\mathcal{T}_h}\int_{\partial K} \biggl(\frac{1}{\mathrm{Sc}}\nabla s_h\cdot\vn_K\biggr) (\phi-\phi_h)
    \, \mathrm{d} S ,\\
 T_5&\coloneqq\sum_{K\in\mathcal{T}_h}\int_{K} \biggl(f_2+\frac{1}{\tau \mathrm{Sc}}\nabla^2 c_h-(\bu_h-v_pe^z)\cdot\nabla c_h\biggr)(\psi-\psi_h) \, \mathrm{d} \bx+\sum_{K\in\mathcal{T}_h}\int_{\partial K} \biggl(\frac{1}{\tau \mathrm{Sc}}\nabla c_h\cdot\vn_K \biggr) (\psi-\psi_h) \, \mathrm{d}S.  
 \end{align*}
 Applying the Cauchy-Schwarz inequality to  $T_1$ implies
 \begin{align}
 T_1&\le \left(\sum_{K\in\mathcal{T}_h}h_K^2\|\textbf{R}_K\|^2_{0,K} \right)^{1/2}
 \left(\sum_{K\in\mathcal{T}_h}h_K^{-2}\|\bv-\bv_h\|^2_{0,K} \right)^{1/2}+\|\ff-\ff_h\|_{0,\Omega}\|\bv\|_{0,\Omega}\nonumber\\
& \le \left(\sum_{K\in\mathcal{T}_h}h_K^2\|\textbf{R}_K\|^2_{0,K} \right)^{1/2} 
\bar{C}\|\bnabla \bv\|_{0,\Omega} +\|\ff-\ff_h\|_{0,\Omega}\|\bv\|_{0,\Omega}.\nonumber
 \end{align} 
 Next, we rewrite $T_2$ in terms of a sum over interior edges and apply again the  Cauchy-Schwarz inequality. Then 
 \begin{align}
  T_2&=\sum_{e\in\mathcal{E}_h}\int_{e} \djump{((\rho_{\mathrm{m}})^{-1}p_h\vI-\nu(c_h)\bnabla\bu_h)\vn}\cdot(\bv-\bv_h) \, 
   \mathrm{d} S \nonumber\\
  &\le \left(\sum_{e\in\mathcal{E}_h}h_e\|\textbf{R}_e\|_{0,e}^2 \right)^{1/2} \left(\sum_{e\in\mathcal{E}_h}h_e^{-1}\|\bv-\bv_h\|_{0,e}^2 \right)^{1/2}\le \left(\sum_{e\in\mathcal{E}_h}h_e\|\textbf{R}_e\|_{0,e}^2 \right)^{1/2}\bar{C}\|\bnabla \bv\|_{0,\Omega}.
 \end{align}
 Then, owing to the Cauchy-Schwarz inequality, it follows that 
 \begin{align}
 T_3\le \left(\sum_{e\in\mathcal{E}_h}\|\djump{\bu_h}\|_{0,e}^2 \right)^{1/2}\bar{C}\|\bnabla \bv\|_{0,\Omega}.
 \end{align}
 Proceeding similarly, we may establish the following bounds  for $T_4$ and $T_5$:
 \begin{align}
 T_4 &\le \Biggl( \left(\sum_{K\in\mathcal{T}_h}h_K^2\|{R}_{1,K}\|^2_{0,K} \right)^{1/2}
 + \left(\sum_{e\in\mathcal{E}_h}h_e\|{R}_{1,e}\|_{0,e}^2 \right)^{1/2} \Biggr)\bar{C}\|\nabla \phi\|_{0,\Omega},\nonumber\\
 T_5 &\le \Biggl( \left(\sum_{K\in\mathcal{T}_h}h_K^2\|{R}_{2,K}\|^2_{0,K} \right)^{1/2}
 + \left(\sum_{e\in\mathcal{E}_h}h_e\|{R}_{2,e}\|_{0,e}^2\right)^{1/2} \Biggr)\bar{C}\|\nabla \psi\|_{0,\Omega}.\nonumber
 \end{align} 
Finally, \eqref{eq:aux33} results as a combination of the bounds derived for $T_1$, $T_2$, $T_3$, $T_4$ and $T_5$.
 \end{proof}
 \begin{theorem}\label{starelth11}
 Let $(\bu,p,s,c)$ and $(\bu_h,p_h,s_h,c_h)$ be the unique solutions to~\eqref{elliptic_cons111sta} and~\eqref{elliptic_cons112sta}, respectively. Let $\Psi$ be the {\it a posteriori} error estimator defined in~\eqref{eststa}. Then 
 the  following estimate holds:
 \begin{align}
 \tnorm{(\bu-\bu_h,p-p_h,s-s_h,c-c_h)}\le C(\Psi+\|\ff-\ff_h\|_0),
 \end{align}
 where $C>0$ is a constant independent of $h$.
 \end{theorem}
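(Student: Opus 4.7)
The plan is to combine Lemma~\ref{rel1.1} and Lemma~\ref{rel1.2} in a straightforward manner, together with the global inf-sup stability of Theorem~\ref{stabspa11}, so that the remaining work is essentially bookkeeping. More precisely, I will start from the upper bound for $\tnorm{(\boldsymbol{e}^{\bu},e^p,e^s,e^c)}$ given by Lemma~\ref{rel1.1}, which expresses this norm in terms of the residual-type quantity
\[
\mathcal{R}(\bv,q,\phi,\psi)\coloneqq \int_{\Omega}(\ff-\ff_h)\cdot\bv+\int_{\Omega}\ff_h\cdot(\bv-\bv_h)+\int_{\Omega}f_1(\phi-\phi_h)+\int_{\Omega}f_2(\psi-\psi_h)-\mathcal{A}_h^{(\bu_h,c_h)}(\bu_h,p_h,s_h,c_h;\bv-\bv_h,q,\phi-\phi_h,\psi-\psi_h),
\]
plus a contribution measured by the jump indicator $\sum_K \Psi_{J_K}^2$. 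I then immediately apply Lemma~\ref{rel1.2} to the quantity $\mathcal{R}(\bv,q,\phi,\psi)$ in order to bound it by $C(\Psi+\|\ff-\ff_h\|_{0,\Omega})\,\tnorm{(\bv,q,\phi,\psi)}$.

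Next, I have to choose the quadruple $(\bv,q,\phi,\psi)$ appropriately. The natural choice is to use the pair provided by Theorem~\ref{stabspa11} corresponding to the error $(\boldsymbol{e}^{\bu},e^p,e^s,e^c)$, so that $\tnorm{(\bv,q,\phi,\psi)}\le 1$ and
\[
\mathcal{A}_h^{(\bu_h,c_h)}\!\bigl((\boldsymbol{e}^{\bu},e^p,e^s,e^c),(\bv,q,\phi,\psi)\bigr)\ge C\,\tnorm{(\boldsymbol{e}^{\bu},e^p,e^s,e^c)}.
\]
The discrete interpolants $\bv_h\in\vV_h$, $\phi_h,\psi_h\in\mathcal{M}_h$ appearing in Lemma~\ref{rel1.2} will be taken as suitable quasi-interpolants (e.g.\ Scott--Zhang or Cl\'ement-type operators with $\vH(\sdiv)$-conforming variants for the velocity space), which guarantees the local approximation estimates of the form $h_K^{-1}\|\bv-\bv_h\|_{0,K}+h_e^{-1/2}\|\bv-\bv_h\|_{0,e}\lesssim \|\bnabla\bv\|_{0,\omega_K}$ that were implicitly used in the proof of Lemma~\ref{rel1.2}. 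Since the jump indicator $\Psi_{J_K}$ is part of the global estimator $\Psi$, the extra term $(1+C)C_r(\sum_K \Psi_{J_K}^2)^{1/2}$ coming from the non-conforming remainder $\bu_h^r$ is already controlled by $\Psi$.

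Collecting these bounds produces
\[
\tnorm{(\boldsymbol{e}^{\bu},e^p,e^s,e^c)}\le C\bigl(\Psi+\|\ff-\ff_h\|_{0,\Omega}\bigr)\,\tnorm{(\bv,q,\phi,\psi)}+(1+C)C_r\Bigl(\sum_{K\in\mathcal{T}_h}\Psi_{J_K}^2\Bigr)^{1/2},
\]
and since $\tnorm{(\bv,q,\phi,\psi)}\le 1$ and both remaining terms are dominated by $\Psi+\|\ff-\ff_h\|_{0,\Omega}$, the claim follows after a single division by a mesh-independent constant.

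The step I expect to be the main obstacle is the hidden smallness hypothesis required to pass from the two-sided form of Lemma~\ref{rel1.1} (where the absorbed terms $(C_1+C_2+C_3+C_4)M\,\tnorm{(\boldsymbol{e}^{\bu},e^p,e^s,e^c)}$ appear) to the clean reliability statement. This will have to be justified using the assumptions $\|\bu\|_{1,\infty},\|s\|_{\infty},\|c\|_{\infty}<M$ on the continuous solution that are already in force from Lemma~\ref{rel1.1}, so that $M$ is chosen sufficiently small to guarantee $(C_1+C_2+C_3+C_4)M<C/2$ and the nonlinear/coupling terms can indeed be absorbed into the left-hand side without deterioration of the constant. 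Once this absorption is secured, and provided the quasi-interpolation estimates used in Lemma~\ref{rel1.2} are valid on the chosen discrete spaces $\vV_h$ and $\mathcal{M}_h$, the rest of the argument is purely mechanical.
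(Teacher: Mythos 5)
Your proposal is correct and follows essentially the same route as the paper: the paper's proof of Theorem~\ref{starelth11} is precisely the combination of Lemma~\ref{rel1.1} (which already internalises the global inf-sup stability of Theorem~\ref{stabspa11} and the absorption of the $M$-dependent coupling terms under the smallness assumptions) with Lemma~\ref{rel1.2}, plus the observation that the jump contribution $\bigl(\sum_{K}\Psi_{J_K}^2\bigr)^{1/2}$ is already part of $\Psi$. The concerns you flag — the smallness of $M$ for absorption and the quasi-interpolation estimates — are exactly the hypotheses and mechanisms already built into those two lemmas, so no additional work is needed.
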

 \begin{proof}
It suffices to apply Lemmas~\ref{rel1.1} and~\ref{rel1.2}.
 \end{proof}
 
\subsection{Efficiency}
For each $K\in\mathcal{T}_h$, we can define the standard polynomial bubble function $b_K$. Then, for any polynomial function $\bv$ on $K$, the following results hold:
\begin{subequations}\label{bubleele11}
\begin{align}
\|b_K\bv\|_{0,K}&\le C \|\bv\|_{0,K},\qquad 
\|\bv\|_{0,K} \le C \|b_{K}^{1/2}\bv\|_{0,K},\\
\|\nabla(b_K\bv)\|_{0,K}&\le Ch_K^{-1}\|\bv\|_{0,K},\qquad 
\|b_K\bv\|_{\infty,K}\le C h_K^{-1}\|\bv\|_{0,K},
\end{align}  
\end{subequations}
where $C$ is a positive constant independent of $K$ and $\bv$.
\begin{lemma}\label{effires11}
The following estimates hold, where $C$ is a positive constant: 
\begin{align*}
h_K\|\textbf{R}_K\|_{0,K}&\le C \bigl(\|c-c_h\|_{1,K}+\|\bu-\bu_h\|_{1,K}+\|p-p_h\|_{0,K}+h_K\|\ff-\ff_h\|_{0,K} \bigr),\\
h_K\|R_{1,K}\|_{0,K}&\le C \bigl(\|s-s_h\|_{1,K}+\|\bu-\bu_h\|_{1,K} \bigr),\qquad 
h_K\|R_{2,K}\|_{0,K} \le C \bigl(\|c-c_h\|_{1,K}+\|\bu-\bu_h\|_{1,K} \bigr),
\end{align*}
 Moreover, it also follows that 
\begin{align*}
\Psi_{K}\le C\tnorm{(\bu-\bu_h,p-p_h,s-s_h,c-c_h)}_K.
\end{align*}
\end{lemma}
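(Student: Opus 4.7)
The plan is to adopt the standard Verf\"urth bubble-function technique, handling each residual in turn and then combining the squared estimates into the bound on $\Psi_K$. The underlying mechanism is always the same: test the strong-form residual against a bubble-multiplied polynomial that vanishes on the relevant boundary, use the continuous PDE satisfied by $(\bu,p,s,c)$ to rewrite the residual in terms of errors, and close via the polynomial inverse-type estimates \eqref{bubleele11}.

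For the momentum element residual, I would set $\vw_K\coloneqq b_K \vR_K$ (extended by zero outside $K$), use \eqref{bubleele11} to write $\|\vR_K\|_{0,K}^2\le C\int_K b_K|\vR_K|^2=C\int_K \vR_K\cdot\vw_K$, and then substitute $-\rho_{\mathrm{m}}^{-1}\nabla p -\bu\cdot\bnabla\bu +\bdiv(\nu(c)\bnabla\bu)+\ff=0$ (which holds pointwise on $K$) to obtain, after integration by parts on $K$ and using $\vw_K|_{\partial K}=\cero$,
\begin{align*}
\int_K \vR_K\cdot\vw_K &= \int_K (\ff_h-\ff)\cdot\vw_K + \int_K \nu(c_h)\bnabla(\bu_h-\bu):\bnabla\vw_K + \int_K (\nu(c_h)-\nu(c))\bnabla\bu:\bnabla\vw_K \\
&\quad + \int_K \rho_{\mathrm{m}}^{-1}(p-p_h)\,\sdiv\vw_K + \int_K \bigl[(\bu-\bu_h)\cdot\bnabla\bu + \bu_h\cdot\bnabla(\bu-\bu_h)\bigr]\cdot\vw_K.
\end{align*}
Cauchy--Schwarz, the Lipschitz bound on $\nu$, the uniform bound on $\bu$ together with the bubble estimates $\|\vw_K\|_{0,K}\le C\|\vR_K\|_{0,K}$ and $\|\bnabla\vw_K\|_{0,K}\le Ch_K^{-1}\|\vR_K\|_{0,K}$ then yield, after dividing by $\|\vR_K\|_{0,K}$ and multiplying by $h_K$, the first stated inequality. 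The same template with a scalar test $b_K R_{1,K}$ (resp.\ $b_K R_{2,K}$) delivers the bounds for $R_{1,K}$ and $R_{2,K}$; the only specific remarks are that $f_1=f_2=0$ so no data-oscillation terms appear, and that in the concentration case the convective splitting produces both $\|\bu-\bu_h\|_{1,K}$ and $\|c-c_h\|_{1,K}$ via the transport velocity $\bu-v_p\ve_z$.

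For the edge residuals $\vR_e,R_{1,e},R_{2,e}$ I would use the standard edge-bubble argument on the patch $\omega_e=K^+\cup K^-$: lift $\vR_e$ to a polynomial $\tilde\vR_e$ on $\omega_e$, test against $b_e\tilde\vR_e$, and integrate by parts elementwise. The bulk contributions are controlled by the already-established element-residual bounds, while the bubble estimates $\|b_e\tilde\vR_e\|_{0,\omega_e}\le Ch_e^{1/2}\|\vR_e\|_{0,e}$ and $\|\bnabla(b_e\tilde\vR_e)\|_{0,\omega_e}\le Ch_e^{-1/2}\|\vR_e\|_{0,e}$ give $h_e^{1/2}\|\vR_e\|_{0,e}\le C\tnorm{(\bu-\bu_h,p-p_h,s-s_h,c-c_h)}_{\omega_e}$, and analogously for $R_{1,e},R_{2,e}$. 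The jump contribution $\Psi_{J_K}$ is immediate: since $\bu\in\vH^1_0(\Omega)$ is single-valued across interior edges and vanishes on $\Gamma$, $\djump{\bu_h}=-\djump{\bu-\bu_h}$ and hence $\sum_{e\in\partial K}h_e^{-1}\|\djump{\bu_h}\|_{0,e}^2$ is already a piece of $\norm{\bu-\bu_h}_{*,\Th}^2$ restricted to the edges of $K$. Squaring and adding the three contributions gives the final estimate $\Psi_K\le C\tnorm{(\bu-\bu_h,p-p_h,s-s_h,c-c_h)}_K$ modulo the higher-order data oscillation $h_K\|\ff-\ff_h\|_{0,K}$.

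The main obstacle is the nonlinear coupling: the viscosity difference $\nu(c_h)-\nu(c)$ and the convective splittings must be closed using the $L^\infty$ smallness assumptions on $\bu$, $s$, $c$ already exploited in Section~\ref{sec:aposteriori1} for reliability, so the smallness parameter $M$ reappears implicitly in the constants. A second subtlety is bookkeeping: edge-patch contributions must be assembled with finite overlap so that the localised form $\Psi_K\le C\tnorm{\cdot}_K$ (rather than a patch-norm on the right) is preserved; this is standard provided the mesh is shape-regular.
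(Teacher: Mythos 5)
Your proposal is correct and follows essentially the same route as the paper: the element residuals are treated exactly as in the paper's proof (test against $b_K\textbf{R}_K$, insert the strong form of the continuous equations, integrate by parts, and close with the inverse estimates \eqref{bubleele11}), with only immaterial sign differences in the intermediate identity. The edge-residual and jump contributions you include are handled by the paper in the separate Lemma~\ref{edgeff} via the same edge-bubble argument, and your observation that the edge terms force a patch norm $\omega_e$ on the right-hand side is consistent with how the paper actually states and uses those bounds.
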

\begin{proof}
For each $K\in\mathcal{T}_h$, we define $\textbf{W}_b=b_K\textbf{R}_K$. Then, using \eqref{bubleele11}, we have
\begin{align*}
\|\textbf{R}_K\|_{0,K}^2&\le \|b_K^{1/2}\textbf{R}_K\|^2_{0,K}=\int_K \textbf{R}_K\cdot \textbf{W}_b\\
&=\int_K \biggl(\ff_h+\nabla\cdot \bigl(\nu(c_h)\bnabla \bu_h \bigr)-(\bu_h\cdot\nabla)\bu_h-\frac{1}{\rho_{\mathrm{m}}} \nabla p_h \biggr)\cdot \textbf{W}_b    =T_1+T_2, 
\end{align*}
where
\begin{align*}
T_1&= \int_{K} \biggl( \bigl((\nu(c)-\nu(c_h)\bigr)\bnabla\bu+\nu(c_h)\bnabla(\bu-\bu_h)):\nabla \textbf{W}_b- \frac{1}{\rho_{\mathrm{m}}}(p-p_h)\nabla\cdot\textbf{W}_b \biggr) +\int_{K}(\ff_h-\ff)\cdot\textbf{W}_b,\\
T_2&=\int_K \bigl(((\bu-\bu_h)\cdot\nabla)\bu+(\bu_h\cdot\nabla)(\bu-\bu_h) \bigr)\cdot \textbf{W}_b.
\end{align*}
Using  the Cauchy-Schwarz inequality and  \eqref{bubleele11}  we obtain 
\begin{align*}
T_1&\le C_1 \bigl(\|c-c_h\|_{1,K}+\|\bu-\bu_h\|_{1,K}+\|p-p_h\|_{0,K}+h_K\|\ff-\ff_h\| \bigr)h_K^{-1}\|\textbf{R}_K\|_{0,K},\\
T_2&\le C_2  \|\bu-\bu_h\|_{1,K} h_K^{-1}\|\textbf{R}_K\|_{0,K},
\end{align*}
and combining  these bounds 
leads to the first stated result. The other two bounds follow similarly.  
\end{proof}

Let $e$ denote an interior edge that is shared by two elements $K$ and $K'$. Let $\omega_e$ be the patch which is the union of $K$ and 
$K'$.
Next, we define the edge bubble function $\zeta_e$ on $e$ with the property that it is positive in the interior of the patch $\omega_e$ and zero on
the boundary of the patch. From \cite{verfurth96}, the following results hold:
\begin{subequations}\label{newedgebub11}
\begin{align}
\|q\|_{0,e}&\le C\|\zeta_e^{1/2}q\|_{0,e},\\
\|\zeta_eq\|_{0,K}&\le Ch_e^{1/2}\|q\|_{0,e},  \quad 
\|\nabla(\zeta_{e}q)\|_{0,K}\le Ch_e^{-1/2}\|q\|_{0,e}
\qquad \forall K\in \omega_e.
\end{align}
\end{subequations}

\begin{lemma}\label{edgeff}
The following estimates hold: 
\begin{align*}
h_e\|\textbf{R}_e\|_{0,e}^2&\le C\sum_{K\in\omega_e}
\bigl(\|\bu-\bu_h\|_{1,K}^2+\|c-c_h\|_{1,K}^2+\|p-p_h\|_{0,K}^2+h_K^2\|\ff-\ff_h\|_{0,K}^2 \bigr),\\
h_e\|{R}_{1,e}\|_{0,e}^2&\le C\sum_{K\in\omega_e}
 \bigl(\|\bu-\bu_h\|_{1,K}^2+\|s-s_h\|_{1,K}^2 \bigr),\\
h_e\|{R}_{2,e}\|_{0,e}^2&\le C\sum_{K\in\omega_e}
\bigl(\|\bu-\bu_h\|_{1,K}^2+\|c-c_h\|_{1,K}^2 \bigr). 
\end{align*}
Moreover,  we  also have 
\begin{align*}
\Psi_{e_K}^2\le C\sum_{e\in\partial K}\sum_{K\in\omega_e}(\|\!|(\bu-\bu_h,p-p_h,s-s_h,c-c_h)\|\!|_{K}^2+h_K^2\|\ff-\ff_h\|_{0,K}^2).
\end{align*}
\end{lemma}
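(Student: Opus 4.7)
The plan is to follow the classical Verf\"urth-style argument using the edge bubble function~$\zeta_e$. For each of the three edge residuals, I would construct an auxiliary test function of the form $\zeta_e$ times a polynomial extension of the residual, test the appropriate equation against it, and use element-wise integration by parts on the patch~$\omega_e$ to turn the edge integral into a sum of volume integrals that can be controlled using the element estimates already obtained in Lemma~\ref{effires11}.

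More concretely, for~$\textbf{R}_e$ set $\textbf{W}_e \coloneqq \zeta_e\,\widetilde{\textbf{R}}_e$, where $\widetilde{\textbf{R}}_e$ is a polynomial extension of~$\textbf{R}_e$ to~$\omega_e$. The equivalence~\eqref{newedgebub11}(a) yields
\[
\|\textbf{R}_e\|_{0,e}^2 \le C \int_e \textbf{R}_e \cdot \textbf{W}_e \, \mathrm{d}S.
\]
Since $\textbf{W}_e$ vanishes on $\partial\omega_e\setminus e$, integrating by parts element by element replaces the right-hand side by a sum over $K\in\omega_e$ of integrals involving $\textbf{R}_K$ against $\textbf{W}_e$, plus differences between continuous and discrete counterparts of $\nu(\cdot)\bnabla\bu$, $(\bu\cdot\bnabla)\bu$, $\nabla p$ and~$\ff$. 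The Lipschitz property of~$\nu$ together with $\nu_1\le \nu(\cdot)\le\nu_2$ control the viscosity cross-term by $\|c-c_h\|_{1,K}$, while the convective contribution is handled by adding and subtracting $(\bu_h\cdot\bnabla)\bu_h$. Applying Cauchy--Schwarz, using the scalings $\|\textbf{W}_e\|_{0,K}\lesssim h_e^{1/2}\|\textbf{R}_e\|_{0,e}$ and $\|\nabla \textbf{W}_e\|_{0,K}\lesssim h_e^{-1/2}\|\textbf{R}_e\|_{0,e}$ from~\eqref{newedgebub11}(b), and absorbing the $h_K\|\textbf{R}_K\|_{0,K}$ contribution via Lemma~\ref{effires11} yields the asserted bound on $h_e\|\textbf{R}_e\|_{0,e}^2$.

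The estimates for $R_{1,e}$ and $R_{2,e}$ follow the same blueprint, with the simplification that the only nonlinearity is the advective coupling through~$\bu_h$ (and $\bu_h-v_{\mathrm{p}}\ve_z$). Adding and subtracting $\bu$ in the transport equations produces a contribution controlled by $\|\bu-\bu_h\|_{1,K}$, exactly matching the right-hand sides in the claim; the cross-term in~$\nu$ is absent here, so no viscosity bookkeeping is needed.

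Summing the three edge bounds over $e\in\partial K$ immediately gives the estimate on~$\Psi_{e_K}^2$, with the data-oscillation term $h_K^2\|\ff-\ff_h\|_{0,K}^2$ appearing solely from the momentum contribution. The main technical obstacle is the careful bookkeeping of the viscosity cross-term in the bound for~$\textbf{R}_e$, where one must use the Lipschitz continuity of~$\nu$ in combination with the uniform bounds on~$\nu$ and the local continuity estimate analogous to~\eqref{eq:a1lipschitz} to absorb the product $(\nu(c)-\nu(c_h))\bnabla\bu \cdot \nabla \textbf{W}_e$ into the desired right-hand side; the remaining manipulations are routine extensions of the argument already deployed in Lemma~\ref{effires11}.
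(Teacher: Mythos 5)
Your proposal is correct and follows essentially the same route as the paper: an edge bubble test function supported on the patch $\omega_e$, element-wise integration by parts (using that the exact flux has no jump across interior edges), the scalings \eqref{newedgebub11}, and absorption of the resulting volume terms via Lemma~\ref{effires11}. The only cosmetic difference is that the paper builds the factor $h_e/2$ directly into the test function $\bvartheta_e$ rather than multiplying by $h_e$ at the end.
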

\begin{proof}
Let $e$ be an interior edge and let us define a rescaling of the edge bubble function in the form 
\begin{align*}
\bvartheta_e=\sum_{e\in\partial K}\frac{h_e}{2}\textbf{R}_e \zeta_e. 
\end{align*}
Using \eqref{newedgebub11} gives 
\begin{align}
h_e\|\textbf{R}_e\|^2_{0,e}&\le {C}
 \bigl(\djump{(\rho_{\mathrm{m}})^{-1}p_h\vI-\nu(c_h)\bnabla \bu_h} ,  \bvartheta_e\bigr)_e\nonumber\\
&\le {C}
 \bigl(\djump{(\rho_{\mathrm{m}})^{-1}p_h\vI-\nu(c_h)\bnabla \bu_h}-\djump{(\rho_{\mathrm{m}})^{-1}p\vI-\nu(c)\bnabla\bu} ,  \bvartheta_e \bigr)_e.\label{eleedge12}
\end{align}
Using integration by parts on  each element of patch~$\omega_e$ implies 
\begin{align*}
 \bigl(\djump{(\rho_{\mathrm{m}})^{-1}p_h\vI-\nu(c_h)\bnabla \bu_h} ,  \bvartheta_e \bigr)_e&=\sum_{K\in\omega_e}\int_K
  \biggl(\nabla\cdot \bigl(\nu(c_h)\bnabla\bu_h \bigr)-\nabla\cdot  \bigl(\nu(c)\bnabla\bu \bigr)
   + \frac{1}{\rho_{\mathrm{m}}} \nabla (p-p_h)\biggr)\cdot \bvartheta_e \\
&\quad+\int_K\biggl( \frac{1}{\rho_{\mathrm{m}}}(p-p_h)\vI+\nu(c_h)\bnabla\bu_h-\nu(c)\bnabla\bu \biggr):\bnabla \bvartheta_e. 
\end{align*}
Note that $(\bu,p,s,c)$ solves the underlying problem, so we then have 
\begin{align}\label{effedge11}
\bigl(\djump{(\rho_{\mathrm{m}})^{-1}p_h\vI-\nu(c_h)\bnabla \bu_h} ,  \bvartheta_e \bigr)_e&=\sum_{K\in\omega_e}\int_K
 \biggl(\ff+\nabla\cdot \bigl(\nu(c_h)\bnabla\bu_h \bigr)-\bu_h\cdot\bnabla\bu_h- \frac{1}{\rho_{\mathrm{m}}}\nabla p_h \biggr)
\cdot \bvartheta_e\nonumber\\
&\quad +\sum_{K\in\omega_e}\int_K(\bu\cdot\bnabla\bu-\bu_h\cdot\bnabla\bu_h)
\cdot\bvartheta_e\nonumber\\
&\quad +\sum_{K\in\omega_e}\int_K \biggl( \frac{p-p_h}{\rho_{\mathrm{m}}}\vI- \bigl(\nu(c)-\nu(c_h) \bigr)\bnabla\bu_h-\nu(c)\bnabla(\bu-\bu_h)\biggr):\bnabla \bvartheta_e\nonumber\\
&=:T_1+T_2+T_3. 
\end{align}
Next, applying  the Cauchy-Schwarz inequality together with Lemma~\ref{effires11} and \eqref{newedgebub11}  gives
\begin{align*}
T_1&\le C_1 \left(\sum_{K\in\omega_e}h_K^2\|\textbf{R}_K\|_{0,K}^2+h_K^2\|\ff-\ff_h\|_{0,K}^2 \right)^{1/2} \left(\sum_{K\in\omega_e}h_K^{-2}\|\bvartheta_e\|_{0,K}^2 \right)^{1/2} \\
&\le C_1 \left(\sum_{K\in\omega_e} \tnorm{(\bu-\bu_h,p-p_h,s-s_h,c-c_h)}_K^2 \right)^{1/2} h_e^{1/2}\|\textbf{R}_e\|_{0,e}, \\
T_2&\le C_2 \left(\sum_{K\in\omega_e}\|\bu-\bu_h\|_{1,K}^2
\right)^{1/2}h_e^{1/2}\|\textbf{R}_e\|_{0,e}, \\
T_3&\le C_3 \left(\sum_{K\in\omega_e}\tnorm{(\bu-\bu_h,p-p_h,s-s_h,c-c_h)}_{K}^2 \right)^{1/2}h_e^{1/2}\|\textbf{R}_e\|_{0,e}. 
\end{align*}
Combining the bounds of $T_1$, $T_2$ and $T_3$ with~\eqref{eleedge12} and~\eqref{effedge11}  implies the first stated result. Similarly, we can prove the other two bounds. 
\end{proof}

\begin{theorem}\label{staefflth11}
 Let $(\bu,p,s,c)$ and $(\bu_h,p_h,s_h,c_h)$ be the unique solutions of problems  \eqref{elliptic_cons111sta} and \eqref{elliptic_cons112sta}, respectively. Let $\Psi$ be defined as in~\eqref{eststa}. Then 
 there exists a constant $C>0$ that is independent of $h$ such that 
 \begin{align*}
 \Psi\le C \Biggl(\tnorm{(\bu-u_h,p-p_h,s-s_h,c-c_h)}+ \left(\sum_{K\in\mathcal{T}_h}h_K^2\|\ff-\ff_h\|_{0,K}^2  \right)^{1/2} \Biggr).
 \end{align*}
 \end{theorem}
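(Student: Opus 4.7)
The plan is to obtain the global efficiency bound by assembling local efficiency estimates for each of the three contributions $\Psi_{R_K}$, $\Psi_{e_K}$, and $\Psi_{J_K}$, summing over all elements, and exploiting the finite-overlap property of the element patches $\omega_e$. All the substantive work has already been done in Lemmas~\ref{effires11} and~\ref{edgeff}, so the argument amounts to organising these bounds.

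First, I would square the three local inequalities stated in Lemma~\ref{effires11}, add them up (together with the edge and jump pieces from $\Psi_K^2$), and sum over $K \in \mathcal{T}_h$. This yields
\[
\sum_{K\in\mathcal{T}_h}\Psi_{R_K}^2 \le C\Biggl(\tnorm{(\bu-\bu_h,p-p_h,s-s_h,c-c_h)}^2 + \sum_{K\in\mathcal{T}_h}h_K^2\|\ff-\ff_h\|_{0,K}^2\Biggr),
\]
since the local error norm $\tnorm{\cdot}_K$ adds up to the global triple norm without any overlap.

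Next, for the edge contributions, I would invoke Lemma~\ref{edgeff} edge by edge, square, and sum over $K\in\mathcal{T}_h$ and $e\in\partial K$. Because each patch $\omega_e$ contains at most two simplices and each simplex appears in at most $(d+1)$ patches, the resulting double sum can be bounded by a constant (depending only on the shape-regularity of the mesh) times the corresponding global sum, giving an analogous estimate for $\sum_K \Psi_{e_K}^2$.

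The only remaining piece is the jump term $\sum_K \Psi_{J_K}^2$. Here I would use that $\bu \in \bH^1_0(\Omega)$ has single-valued trace on interior edges and vanishing trace on $\partial\Omega$, so $\djump{\bu}=\mathbf{0}$ on every edge. Therefore $\djump{\bu_h}=\djump{\bu_h-\bu}$, and
\[
\sum_{K\in\mathcal{T}_h}\Psi_{J_K}^2 = \sum_{K\in\mathcal{T}_h}\sum_{e\in\partial K}h_e^{-1}\|\djump{\bu_h-\bu}\|_{0,e}^2 \le C\,\|\bu-\bu_h\|_{\Thnorm}^2 \le C\tnorm{(\bu-\bu_h,p-p_h,s-s_h,c-c_h)}^2,
\]
directly from the definition of $\Norm_{*,\Th}$ inside the broken norm.

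Combining the three displays above and taking square roots produces the claim. I do not anticipate a genuine obstacle: the work is essentially bookkeeping and the only subtle point is recognising that the jump term is controlled for free by $\Norm_{\Thnorm}$ (hence by $\tnorm{\cdot}$) once one uses the continuity of the exact velocity $\bu$.
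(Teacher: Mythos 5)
Your proposal is correct and follows essentially the same route as the paper, whose proof consists precisely of combining Lemmas~\ref{effires11} and~\ref{edgeff} and summing over the mesh. Your explicit treatment of the jump contribution $\Psi_{J_K}$ via $\djump{\bu}=\mathbf{0}$ (so that $\djump{\bu_h}=\djump{\bu_h-\bu}$ is absorbed into $\norm{\bu-\bu_h}_{*,\Th}\le\norm{\bu-\bu_h}_{\Thnorm}$) is a detail the paper leaves implicit inside the ``moreover'' statements of those lemmas, and is the right way to close the bookkeeping.
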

 \begin{proof}
 Combining Lemmas~\ref{effires11} and~\ref{edgeff} implies the stated result.
 \end{proof}
 
 \section{{\it A posteriori} error bound for the semidiscrete method}
 \label{sec:aposteriori2}
For each $t\in(0,T]$, let us consider the problem: find $(\tilde{\bu},\tilde{p},\tilde{c},\tilde{s})\in \bH^1_0(\Omega)\times L^2_0(\Omega)\times H^1_0(\Omega)\times H^1_0(\Omega)$ such that
\begin{align*}
a_1(c_h,\tilde{\bu},\bv)+c_1({\bu}_h;\tilde{\bu},\bv)+b(\bv,\tilde{p})&=({\ff},\bv),\quad\forall \bv\in\bH^1_0(\Omega),\\
b(\tilde{\bu},q)&=0,\quad \forall q\in L^2_0(\Omega),\\
a_2(\tilde{s},\phi)+c_2(\bu_h;\tilde{s},\phi)&=(f_1,\phi),\quad \forall\phi\in H^1_0(\Omega),\\
\frac{1}{\tau}a_2(\tilde{c},\psi)+c_2(\bu_h-v_p\boldsymbol{e}_z;\tilde{c},\psi)&=(f_2,\psi)\quad \forall\psi\in H^1_0(\Omega),
\end{align*}
where 
\begin{equation}\label{def:fs-sec6}
\ff=
(\alpha s_h+\beta c_h)\textbf{g}
- \partial_t \bu_{h},\quad f_1=-  \partial_t s_h, \quad f_2=- \partial_t c_h.
\end{equation}

Also, for each $t\in(0,T]$, we write the discrete weak formulation: find $(\tilde{\bu}_h,\tilde{p}_h,\tilde{c}_h,\tilde{s}_h)\in C^{0,1}(0,T; \bV_h)\times C^{0,0}(0,T;\mathcal{Q}_h)\times C^{0,1}(0,T; \mathcal{M}_h)\times C^{0,1}(0,T;\mathcal{M}_h)$ such that
\begin{subequations}\label{elliptic_cons112}
\begin{align}
a_1(c_h,\tilde{\bu}_h,\bv)+c_1({\bu}_h;\tilde{\bu}_h,\bv)+b(\bv,\tilde{p})&=({\ff},\bv),\quad\forall \bv\in \bV_h,\\
b(\tilde{\bu}_h,q)&=0,\quad \forall q\in \mathcal{Q}_h,\\
a_2(\tilde{s}_h,\phi)+c_2(\bu_h;\tilde{s}_h,\phi)&=(f_1,\phi),\quad \forall\phi\in \mathcal{M}_h\\
\frac{1}{\tau}a_2(\tilde{c}_h,\psi)+c_2(\bu_h-v_p\boldsymbol{e}_z;\tilde{c}_h,\psi)&=(f_2,\psi)\quad \forall\psi\in \mathcal{M}_h,
\end{align}
\end{subequations}
where \eqref{def:fs-sec6} remains in effect. 

\begin{lemma}\label{semdislem11}
For each $t\in(0,T]$ and for all $(\bv,q,\phi,\psi)\in\bH^1_0(\Omega)\times L^2_0(\Omega)\times H^1_0(\Omega)\times H^1_0(\Omega)$ we have
\begin{align*}
\left(\partial_t e_{\bu}, \bv\right)+a_1(c,\rho_{\bu},\bv)+c_1({\bu};\rho_{\bu},\bv)+b(\bv,p-\tilde{p})&=a_1(c_h,\tilde{\bu},\bv)-a_1(c,\tilde{\bu},\bv)-c_1(e_{\bu};\tilde{\bu},\bv),\quad\forall \bv\in\bH^1_0(\Omega),\\
b(\bu-\tilde{\bu},q)&=0,\quad \forall q\in L^2_0(\Omega),\\
\left(  \partial_t e_s, \phi\right)+a_2(\rho_s,\phi)+c_2(\bu;\rho_s,\phi)&=-c_2(e_{\bu};\tilde{s},\phi),\quad \forall\phi\in H^1_0(\Omega),\\
\left(\partial_t e_c, \psi\right)+\frac{1}{\tau}a_2(\rho_c,\psi)+c_2(\bu-v_p\boldsymbol{e}_z;\rho_c,\psi)&=-c_2(e_{\bu};\tilde{c},\psi)\quad \forall\psi\in H^1_0(\Omega),
\end{align*}  
where $e_{\bu}=\bu-\bu_h$, $e_s=s-s_h$, $e_c=c-c_h$, $\rho_{\bu}=\bu-\tilde{u}$, $\rho_s=s-\tilde{s}$ and $\rho_c=c-\tilde{c}$.
\end{lemma}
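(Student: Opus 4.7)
The plan is to derive the four identities in the lemma by subtracting, equation by equation, the auxiliary elliptic problem defining $(\tilde{\bu},\tilde{p},\tilde{s},\tilde{c})$ from the original transient weak formulation \eqref{eq:main_var_mod} evaluated at the same time~$t$, and then rewriting the result using the natural splittings $\bu=\tilde{\bu}+\rho_{\bu}$, $s=\tilde{s}+\rho_s$, $c=\tilde{c}+\rho_c$ together with bilinearity of the trilinear forms in their arguments. The semi-discrete forcing choice $\ff=(\alpha s_h+\beta c_h)\vg-\partial_t\bu_h$, $f_1=-\partial_t s_h$, $f_2=-\partial_t c_h$ is designed precisely so that the constant-in-$\bv$ data cancel correctly under this subtraction.

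For the momentum equation I would start from $(\partial_t\bu,\bv)+a_1(c;\bu,\bv)+c_1(\bu;\bu,\bv)+b(\bv,p)=((\alpha s+\beta c)\vg,\bv)$ and subtract the identity for $\tilde{\bu}$. The difference of the two time-derivative contributions collapses to $(\partial_t e_{\bu},\bv)$ on the left; the divergence-free identity $b(\bu-\tilde{\bu},q)=0$ is immediate. Splitting $\bu=\rho_{\bu}+\tilde{\bu}$ in the \emph{second} argument of $a_1(c;\cdot,\bv)$ and $c_1(\bu;\cdot,\bv)$ leaves the $\rho_{\bu}$-terms on the left in the claimed form $a_1(c;\rho_{\bu},\bv)+c_1(\bu;\rho_{\bu},\bv)$. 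The leftover $\tilde{\bu}$-terms combine: using linearity of $c_1$ in its first slot, $c_1(\bu;\tilde{\bu},\bv)-c_1(\bu_h;\tilde{\bu},\bv)=c_1(e_{\bu};\tilde{\bu},\bv)$, and the only remaining discrepancy in $a_1$ is in the coefficient, giving $a_1(c_h;\tilde{\bu},\bv)-a_1(c;\tilde{\bu},\bv)$, which together yield exactly the right-hand side stated in the lemma.

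The salinity and concentration error equations follow the same template, with the simplification that $a_2$ depends on neither the concentration nor the advective velocity. Subtracting the auxiliary problem for $\tilde{s}$ from \eqref{eq:main_var_mod-3}, the time-derivative terms again combine to $(\partial_t e_s,\phi)$ because of the choice $f_1=-\partial_t s_h$; splitting $s=\tilde{s}+\rho_s$ in the second slot of $a_2$ and $c_2$ produces $a_2(\rho_s,\phi)+c_2(\bu;\rho_s,\phi)$ on the left, while the residual $\tilde{s}$-terms recombine via $c_2(\bu;\tilde{s},\phi)-c_2(\bu_h;\tilde{s},\phi)=c_2(e_{\bu};\tilde{s},\phi)$ into the single right-hand side term $-c_2(e_{\bu};\tilde{s},\phi)$. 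The concentration equation is handled identically, noting that the advecting field differs from the velocity by the constant $v_p\ve_z$, which drops out in the difference, so the same linearity argument yields $-c_2(e_{\bu};\tilde{c},\psi)$.

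The computation is purely algebraic and I do not anticipate any genuine obstacle; it amounts to careful bookkeeping. The two points that require attention are (i) always splitting the \emph{second} argument of the (tri)linear forms rather than the first, so that the continuous coefficients $c$ and $\bu$ remain throughout in the residual-like terms kept on the left-hand side, and (ii) grouping terms so that each coefficient discrepancy (between $c$ and $c_h$, or between $\bu$ and $\bu_h$) is exposed separately as a manifestly first-order error contribution — this is what makes this decomposition useful for the subsequent \emph{a posteriori} analysis, which will control $\rho_{\bu},\rho_s,\rho_c$ via the stationary estimator of Section~\ref{sec:aposteriori1} applied to the elliptic reconstruction, and bound the right-hand side residuals by the total errors $e_{\bu},e_s,e_c$ in the standard way.
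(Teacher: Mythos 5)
Your overall strategy --- subtract the elliptic reconstruction problem from the continuous transient formulation at the same time $t$, split $\bu=\tilde{\bu}+\rho_{\bu}$, $s=\tilde{s}+\rho_s$, $c=\tilde{c}+\rho_c$ in the second slot of each form, and regroup the coefficient discrepancies using linearity in the first slot --- is exactly the computation this lemma rests on (the paper states it without proof), and your derivations of the divergence identity, the salinity equation, and the concentration equation are correct as described; in particular the constant drift $v_p\ve_z$ does drop out of the difference $c_2(\bu-v_p\ve_z;\tilde{c},\psi)-c_2(\bu_h-v_p\ve_z;\tilde{c},\psi)=c_2(e_{\bu};\tilde{c},\psi)$ as you say.

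The gap is in the momentum equation, where your claim that ``the constant-in-$\bv$ data cancel correctly'' is not accurate. The continuous right-hand side is $F(s,c,\bv)=((\alpha s+\beta c)\vg,\bv)_{\Omega}$, while the reconstruction uses $\ff=(\alpha s_h+\beta c_h)\vg-\partial_t\bu_h$. The $\partial_t\bu_h$ part does combine with $\partial_t\bu$ to produce $(\partial_t e_{\bu},\bv)$, but the buoyancy parts do not cancel: the subtraction leaves the residual $((\alpha e_s+\beta e_c)\vg,\bv)_{\Omega}=F(e_s,e_c,\bv)$, which must therefore appear on the right-hand side of the first identity. As printed, the lemma omits this term, and your bookkeeping silently reproduces the omission rather than detecting it --- note that the energy argument in Theorem~\ref{semdisrel} does carry a contribution of the form $CM\|e_s\|_{1,\Omega}\|e_{\bu}^c\|_{\Thnorm}$ in the momentum estimate, which is precisely the kind of term this residual generates. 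You should either add $F(e_s,e_c,\bv)$ to the right-hand side of the first identity (it is harmless downstream, being absorbed by the already-controlled norms of $e_s$ and $e_c$ via Gr\"onwall/Young arguments), or justify why it vanishes --- and it does not. The remaining three identities need no correction.
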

Next we introduce the error indicator $\Theta$ as
\begin{align}\label{semidisest}
\Theta^2=\|e_{\bu}(0)\|^2_{0,\Omega}+\|e_c(0)\|^2_{0,\Omega}+\|e_s(0)\|^2_{0,\Omega}+\int_{0}^T\Psi^2+\int_0^T\Theta_2^2+\max_{0\le t\le T}\Theta_3^2,
\end{align}
where
\begin{align*}
\Theta_2^2=\sum_{e\in\mathcal{E}_h}h_e\|\djump{{\partial_t\bu_h}}\|_{0,e}^2, \qquad 
\Theta_3^2=\sum_{e\in\mathcal{E}_h}h_e\|\djump{\bu_h}\|_{0,e}^2,
\end{align*}
whereas $\Psi$ is the global {\it a posteriori} error estimator for the steady problem
with  element and edge residual contributions defined in 
\eqref{eq:RK-Re}. In this case we now replace $\ff$ and $f_1,f_2$ by \eqref{def:fs-sec6}.

\begin{theorem}\label{semdisrel}
 Let $(\bu,p,s,c)$ and $(\bu_h,p_h,s_h,c_h)$ be the solutions to~\eqref{eq:main_var_mod} and~\eqref{elliptic_cons112}, respectively. Let $\Theta$ be the {\it a posteriori} error estimator defined in~\eqref{semidisest}. Then 
 there exists $C>0$, independent of $h$, such that 
\begin{align*}
\bigl(\|e_{\bu}\|^2_{\star}+\|e_s\|^2_{\star}+\|e_c\|^2_{\star} \bigr)^{1/2}&\le C\,\Theta,\\
\|\partial_t e_{\bu}+\nabla(p-p_h)\|_{L^2(0,T;\bH^{-1}(\Omega))}+\|\partial_t e_s\|_{L^2(0,T;H^{-1}(\Omega))}+\|\partial_t e_c\|_{L^2(0,T;H^{-1}(\Omega))}&\le C\,\Theta, 
\end{align*}
where 
\begin{align*}
\|\bv\|_{\star}^2=\|\bv\|_{L^{\infty}(0,T;L^2(\Omega))}^2+\int_{0}^T\|\bv\|_{\Thnorm}^2dt, \quad 
\|\phi\|_{\star}^2=\|\phi\|_{L^{\infty}(0,T;L^2(\Omega))}^2+\int_{0}^T\|\phi\|_{1}^2dt.
\end{align*}
\end{theorem}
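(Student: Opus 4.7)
My plan is to deploy the elliptic reconstruction technique of Makridakis--Nochetto (adapted to DG in reference \cite{georgoulis11}), introducing the splittings $e_{\bu} = \rho_\bu + \epsilon_\bu$, $e_s = \rho_s + \epsilon_s$, $e_c = \rho_c + \epsilon_c$ with $\epsilon_\bu := \tilde\bu - \bu_h$, $\epsilon_s := \tilde s - s_h$, $\epsilon_c := \tilde c - c_h$. At each fixed time $t$, the pair $(\tilde\bu,\tilde p,\tilde s,\tilde c)$ and its discrete counterpart $(\bu_h,p_h,s_h,c_h)$ solve exactly the stationary problems \eqref{elliptic_cons111sta} and \eqref{elliptic_cons112sta} with the shared data \eqref{def:fs-sec6}; in particular $\ff=\ff_h$, so the data-oscillation term of Theorem~\ref{starelth11} vanishes. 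I would therefore apply Theorem~\ref{starelth11} pointwise in~$t$ to obtain
$\tnorm{(\epsilon_\bu,\tilde p-p_h,\epsilon_s,\epsilon_c)}(t)\le C\,\Psi(t)$,
which after squaring and integrating in time matches the $\int_0^T\Psi^2$ contribution inside~$\Theta$.

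The next step would be to test the error equations of Lemma~\ref{semdislem11} with $\bv=e_\bu$, $\phi=e_s$, $\psi=e_c$. The pressure term $b(e_\bu,p-\tilde p)$ vanishes because $\sdiv\bu=\sdiv\tilde\bu=\sdiv\bu_h=0$. Writing $\rho_\bu=e_\bu-\epsilon_\bu$ inside the symmetric form $a_1(c;\rho_\bu,e_\bu)$ produces $a_1(c;e_\bu,e_\bu)-a_1(c;\epsilon_\bu,e_\bu)$, whose first piece gives coercivity via \eqref{eq:coerciv_a} while the second is absorbed by Young's inequality into $C\,\Psi(t)^2$ using the pointwise reconstruction bound from Step~1. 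The viscosity mismatch $a_1(c_h;\tilde\bu,e_\bu)-a_1(c;\tilde\bu,e_\bu)$ is controlled through \eqref{eq:a1lipschitz} under the smallness of $\|\bu\|_{1,\infty}$, while the convective terms $c_1(e_\bu;\tilde\bu,e_\bu)$, $c_2(e_\bu;\tilde s,\phi)$ and their analogues are bounded in the same spirit as in the proof of Theorem~\ref{thm:errors_u}. The outcome is a Gronwall-type inequality
\[
\tfrac12\tfrac{d}{dt}\bigl(\|e_\bu\|_{0,\Omega}^2+\|e_s\|_{0,\Omega}^2+\|e_c\|_{0,\Omega}^2\bigr)+\tfrac{\kappa}{2}\bigl(\|e_\bu\|_{\Thnorm}^2+\|e_s\|_{1,\Omega}^2+\|e_c\|_{1,\Omega}^2\bigr)\le C\,\Psi(t)^2+C\bigl(\|e_\bu\|_{0,\Omega}^2+\|e_s\|_{0,\Omega}^2+\|e_c\|_{0,\Omega}^2\bigr),
\]
from which I would integrate on $[0,t]$, apply Gronwall, and take the supremum over $t\in[0,T]$ to recover the first claimed bound, with the initial-data contribution of \eqref{semidisest} arising from the integration constants.

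For the dual-norm estimate I would test the equations of Lemma~\ref{semdislem11} against unit-norm functions $(\bv,\phi,\psi)$ in $\bH^1_0(\Omega)\times H^1_0(\Omega)\times H^1_0(\Omega)$. Combining $b(\bv,p-\tilde p)$ with $(\partial_t e_\bu,\bv)$ and writing $p-\tilde p=(p-p_h)-(\tilde p-p_h)$ recovers the sought functional $\partial_t e_\bu+\rho_{\mathrm{m}}^{-1}\nabla(p-p_h)$ up to a term involving $\tilde p-p_h$ already bounded by $\Psi(t)$ via Step~1. The remaining right-hand side terms are bounded using the continuity estimates of Lemma~\ref{lemma4:ab_bounds} and the Lipschitz bound \eqref{eq:a1lipschitz}, together with the $\|e_\bu\|_{\Thnorm}$, $\|\epsilon_\bu\|_{\Thnorm}$, $\|\rho_\bu\|_{\Thnorm}$ controls already obtained in Step~2; squaring, integrating in time and passing to the supremum over test functions then delivers the $L^2(0,T;H^{-1})$ estimate for all three time-derivative quantities.

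The hard part will be the bookkeeping of the non-conforming contributions: because the reconstruction data \eqref{def:fs-sec6} involves $\partial_t\bu_h$, the normal jumps of $\partial_t\bu_h$ across interior edges have to be tracked and give the $\Theta_2$ piece of $\Theta$, while the jump of $\bu_h$ itself (which does not cancel against the conforming lifts used inside~$\Psi$ and inside the reconstruction) contributes the $\Theta_3$ term. A secondary difficulty is keeping the nonlinear couplings $\|e_c\|_{1,\Omega}\|e_\bu\|_{\Thnorm}$ and $c_1(e_\bu;\tilde\bu,e_\bu)$ absorbable into the coercive left-hand side, which forces the smallness of~$M$ exactly as in the a priori analysis of Theorems~\ref{thm:errors_u}--\ref{thm:errors_c}.
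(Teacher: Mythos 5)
Your overall strategy is the paper's: an elliptic reconstruction $(\tilde\bu,\tilde p,\tilde s,\tilde c)$ whose distance to $(\bu_h,p_h,s_h,c_h)$ is controlled pointwise in time by the stationary estimator of Theorem~\ref{starelth11} (with no data-oscillation term, since the reconstruction and the discrete problem share the data \eqref{def:fs-sec6}), followed by an energy argument on the error equations of Lemma~\ref{semdislem11} and a duality argument for the time-derivative bound. One step, as literally written, would fail: you cannot test Lemma~\ref{semdislem11} with $\bv=e_{\bu}$, because $e_{\bu}=\bu-\bu_h$ is not in $\bH^1_0(\Omega)$ --- the BDM velocity $\bu_h$ is only $\bH(\sdiv)$-conforming and carries tangential jumps. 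The paper tests with the conforming part $e_{\bu}^c$ of the error, obtained from the splitting $\bu_h=\bu_h^c+\bu_h^r$, and it is precisely the leftover nonconforming pieces $\partial_t\bu_{h,r}$ and $\bu_{h,r}$ (controlled by the jump seminorms) that produce $\Theta_2$ and $\Theta_3$; note these are tangential, not normal, jumps --- the normal component of a BDM field is continuous across interior facets, so the ``normal jumps of $\partial_t\bu_h$'' you invoke vanish identically. Two smaller remarks: the paper avoids your Gronwall step altogether by absorbing the zero-order terms via Poincar\'e--Friedrichs and Young's inequality into the coercive left-hand side (using the smallness of $M$, exactly as you anticipate for the convective couplings), which yields constants free of exponential growth in $T$; and the $L^\infty(0,T;L^2)$ part of $\Norm_{\star}$ is obtained by integrating up to the time $T_0$ at which the $L^2$ norm of the conforming error attains its maximum, which is the rigorous version of your ``take the supremum over $t$''.
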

\begin{proof}
Choosing $\bv=e_{\bu}^c$, $q=p-\tilde{p}$, $\phi=e_s$ and $\psi=e_c$ in Lemma~\ref{semdislem11} gives
\begin{align*}
\left( \partial_t e_{\bu}, e_{\bu}^c\right)+a_1(c,\rho_{\bu},e_{\bu}^c)+c_1({\bu};\rho_{\bu},e_{\bu}^c)&=a_1(c_h,\tilde{\bu},e_{\bu}^c)-a_1(c,\tilde{\bu},e_{\bu}^c)-c_1(e_{\bu};\tilde{\bu},e_{\bu}^c),\\
\left( \partial_t e_s , e_s\right)+a_2(\rho_s,e_s)+c_2(\bu;\rho_s,e_s)&=-c_2(e_{\bu};\tilde{s},e_s),\\
\left( \partial_t  e_c, e_c\right)+\frac{1}{\tau}a_2(\rho_c,e_c)+c_2(\bu-v_p\boldsymbol{e}_z;\rho_c,e_c)&=-c_2(e_{\bu};\tilde{c},e_c).
\end{align*}
Moreover,  there also holds 
\begin{align*}
\left( \partial_t e_{\bu}^c, e_{\bu}^c\right)+a_1(c,e_{\bu}^c,e_{\bu}^c)+c_1({\bu};e_{\bu}^c,e_{\bu}^c)&=\left(\partial_t \bu_{h,r} , e_{\bu}^c\right)+a_1(c_h,\tilde{\bu},e_{\bu}^c)-a_1(c,\tilde{\bu},e_{\bu}^c)-c_1(e_{\bu};\tilde{\bu},e_{\bu}^c)\\
&\quad+a_1(c,\theta_{\bu}^c,e_{\bu}^c)+c_1({\bu};\theta_{\bu}^c,e_{\bu}^c)\nonumber\\
\left( \partial_t e_s, e_s\right)+a_2(e_s,e_s)+c_2(\bu;e_s,e_s)&=-c_2(e_{\bu};\tilde{s},e_s)+a_2(\theta_s,e_s)+c_2(\bu;\theta_s,e_s),\\
\left( \partial_t e_c, e_c\right)+\frac{1}{\tau}a_2(e_c,e_c)+c_2(\bu-v_p\boldsymbol{e}_z;e_c,e_c)&=-c_2(e_{\bu};\tilde{c},e_c)+\frac{1}{\tau}a_2(\theta_c,e_c)+c_2(\bu-v_p\boldsymbol{e}_z;\theta_c,e_s),
\end{align*}
where $\theta_{\bu}^c=\tilde{\bu}-\bu_h^c$.  
Using the Cauchy-Schwarz inequality, we have
\begin{align*}
\frac{\mathrm{d}}{\mathrm{d}t}\|e_{\bu}^c\|^2_{0,\Omega}+\alpha_a\|e_{\bu}^c\|^2_{\Thnorm}&\le \bigl(C_1\|\theta_{\bu}^c\|_{\Thnorm} +M\|\bu_{h,r}\|_{\Thnorm} \bigr)\|e_{\bu}^c\|_{\Thnorm}\nonumber\\&\quad+C_2M\|e_s\|_{1,\Omega}\|e_{\bu}^c\|_{\Thnorm}+C_3M\|e_{\bu}^c\|_{\Thnorm}^2+\|{\partial_t \bu_{h,r}}{}\|_{0,\Omega}\|e_{\bu}^c\|_{0,\Omega},\\
\frac{\mathrm{d}}{\mathrm{d}t}\|e_s\|^2_{0,\Omega}+\tilde{\alpha}_a\|e_s\|^2_{1,\Omega}&\le 
\bigl(C_4\|\theta_s\|_{1,\Omega}+M\|\bu_{h,r}\|_{\Thnorm} \bigr)\|e_s\|_{1,\Omega}+C_5M\|e_s\|_{1,\Omega}\|e_{\bu}^c\|_{1,\Omega},\\
\frac{\mathrm{d}}{\mathrm{d}t}\|e_c\|^2_{0,\Omega}+\tilde{\alpha}_a\|e_c\|^2_{1,\Omega}&\le \bigl(C_6\|\theta_c\|_{1,\Omega} +M\|\bu_{h,r}\|_{1,\Omega} \bigr)\|e_c\|_{1,\Omega}+C_7M\|e_c\|_{1,\Omega}\|e_{\bu}^c\|_{1,\Omega}.
\end{align*}
Let us now suppose that $E_c:=\|e_{\bu}^c(T_0)\|=\|e_{\bu}\|_{L^{\infty}(0,T;L^2(\Omega))}$, for some $T_0\in[0,T]$. Then
using Poincar\'{e}-Friedrichs's inequality, Young's inequality and then combining the three equations implies 
\begin{align*}
\frac{\mathrm{d}}{\mathrm{d}t} \| e_{\bu}^c\|^2_{0,\Omega}+\|e_{\bu}^c\|^2_{\Thnorm}+&\frac{\mathrm{d}}{\mathrm{d}t}\|e_c\|^2_{0,\Omega}+\|e_c\|^2_{1}+\frac{\mathrm{d}}{\mathrm{d}t}\|e_s\|^2_{0,\Omega}+\|e_s\|^2_{1,\Omega}\nonumber\\
&\le C \bigl(\|\theta_{\bu}^c\|_{\Thnorm}^2 + \|\theta_s\|_{1,\Omega}^2+\|\theta_c\|_{1,\Omega}^2+\|{\partial_t \bu_{h,r}}{}\|_{0,\Omega}^2 +M\|\bu_{h,r}\|^2_{\Thnorm} \bigr).
\end{align*}
Integrating with respect to $t$ on $[0,T]$ and $[0,T_0]$ yields
\begin{align*}
\|e_{\bu}^c\|^2_{\star}+\|e_c\|^2_{\star}+\|e_s\|^2_{\star}&\le \|e_{\bu}^c(0)\|^2_{0,\Omega}+\|e_c(0)\|^2_{0,\Omega}+\|e_s(0)\|^2_{0,\Omega} \\
& \quad +C \left (\int_{0}^{T} \bigl(\|\theta_{\bu}^c\|_{\Thnorm}^2 + \|\theta_s\|_{1,\Omega}^2+\|\theta_c\|_{1,\Omega}^2 \bigr)+\int_{0}^T\|{\partial_t\bu_{h,r}}{}\|_{0,\Omega}^2 +M\int_{0}^T\|\bu_{h,r}\|^2_{\Thnorm} \right),
\end{align*}
and we moreover  have
\begin{align}\label{sderr11}
\|e_{\bu}\|^2_{\star}+\|e_c\|^2_{\star}+\|e_s\|^2_{\star}&\le \|e_{\bu}(0)\|^2_{0,\Omega}+\|e_c(0)\|^2_{0,\Omega}+\|e_s(0)\|^2_{0,\Omega}\nonumber\\
& \quad +C \left(\int_{0}^{T} \bigl(\|\theta_{\bu}\|_{\Thnorm}^2 + \|\theta_s\|_{1,\Omega}^2+\|\theta_c\|_{1,\Omega}^2 \bigr)+\int_{0}^T\|{\partial_t \bu_{h,r}}{}\|^2_{0,\Omega} +\|\bu_{h,r}\|_{\star}^2 \right),
\end{align}
and as a result we can combine Theorem~\ref{starelth11} and~\eqref{sderr11}  to readily obtain the first stated result. 

On the other hand, integrating by parts in Lemma \ref{semdislem11} yields
\begin{align*}
\left( \partial_t  e_{\bu} +\nabla(p-p_h), \bv\right)&=-a_1(c,\rho_{\bu},\bv)-c_1({\bu};\rho_{\bu},\bv)-b(\bv,p_h-\tilde{p})+a_1(c_h,\tilde{\bu},\bv)\\
&\quad-a_1(c,\tilde{\bu},\bv)-c_1(e_{\bu};\tilde{\bu},\bv),\quad\forall \bv\in\bH^1_0(\Omega)\\
\left( \partial_t e_s, \phi\right)&=-a_2(\rho_s,\phi)-c_2(\bu;\rho_s,\phi)-c_2(e_{\bu};\tilde{s},\phi),\quad \forall\phi\in H^1_0(\Omega),\\
\left( \partial_t e_c, \psi\right)&=-\frac{1}{\tau}a_2(\rho_c,\psi)-c_2(\bu-v_p\boldsymbol{e}_z;\rho_c,\psi)-c_2(e_{\bu};\tilde{c},\psi)\quad \forall\psi\in H^1_0(\Omega).
\end{align*}
We apply Young's inequality and the definition of the dual norm. Then, we integrate in time 
the resulting expression. Finally, the second result is a consequence of 
Theorem~\ref{starelth11} and~\eqref{sderr11}.
\end{proof}

\section{{\it A posteriori} error analysis for the fully discrete method}
\label{sec:aposteriori3}
In this section, we develop an {\it a posteriori} error estimator for the fully discrete problem and focus the presentation on the simpler case of a time discretisation by the backward Euler method.
For each time step $k$ ($1\le k\le N$), we define the (global in space) 
time indicator $\Xi_k$ as
\begin{align*}
\Xi_k=(\Xi_{k,1}^2 +\Xi_{k,2}^2 +\Xi_{k,3}^2 )^{1/2}, 
\end{align*}
where 
\begin{align*}
\Xi_{k,1}^2& \coloneqq \tilde{\tau}_k \bigl(\|\bu^k_h-I^k\bu^{k-1}_h\|_{1,\mathcal{T}_{h,k}}^2+h_e\tilde{\tau}^{-2}_k\| \djump{ I^k\bu_{h}^{n-1}-\bu_{h}^{n-1}} \|_{0,e}^2+h_e\tilde{\tau}^{-2}_k\|\djump{\bu_{h}^{n}-I^k\bu_{h}^{n-1}}\|_{0,e}^2 \bigr),\\
\Xi_{k,2}^2&\coloneqq\tilde{\tau}_k\|s^k_h-s^{k-1}_h\|^2_1,\qquad 
\Xi_{k,3}^2\coloneqq \tilde{\tau}_k\|c^k_h-c^{k-1}_h\|^2_1.
\end{align*}
Here
\begin{align*}
\|\bu^{k}_{h}\|_{1,\mathcal{T}_{h,k}}^2=\sum_{K\in\mathcal{T}_{h,k}}\|\bnabla_h\bu^k_h\|_{0,K}+\sum_{e\in\mathcal{E}_h}\frac{1}{h_e}\| \djump{\bu^k_h} \|_{0,e}.
\end{align*}
Next we define the accumulated time and spatial error indicators as 
\begin{equation}\label{fullydisest}
\Xi^2 = \sum_{k=1}^{N} \Xi_k^2, \qquad 
\Upsilon^2=\sum_{k=1}^{N}\tilde{\tau}_k \bigl(\Upsilon^2_k(\bu^k_h,p^k_h,s^k_h,c^k_h)+\Upsilon^2_k(I^k\bu_h^{k-1},I^k_pp_h^{k-1},s^{k-1}_h,c^{k-1}_h) \bigr),
\end{equation}
where the terms $\Upsilon^2_k$ are constructed with the {\it a posteriori} error estimator contributions defined as in the 
steady case \eqref{eq:RK-Re}, but at a given time step $k$. That is, 
\begin{align*}
\Upsilon^2_k(\bu_h^{k},p_h^{k},s^{k}_h,c^{k}_h)=\Upsilon^2_{K,k}+\Upsilon^2_{e,k}+\Upsilon^2_{J,k},
\end{align*}
with 
\begin{gather*}
\Upsilon^2_{K,k}\coloneqq h_K^2 \bigl(\|\textbf{R}_{K}^k\|^2_{0,K}+\|R_{1,K}^k\|_{0,K}^2+\|R_{2,K}^k\|_{0,K}^2 \bigr), \\
\Upsilon^2_{e,k} \coloneqq \sum_{e\in\partial K}h_e \bigl(\|\textbf{R}_{e}^{k}\|_{0,e}^2+\|R_{1,e}^k\|^2_{0,e}+\|R_{2,e}^k\|^2_{0,e} \bigr),\quad 
\Upsilon^2_{J,k} \coloneqq \sum_{e\in\partial K}h_e^{-1}\|\djump{\bu_h^k}\|_{0,e}^2,
\end{gather*}
and  
\begin{align*}
& \textbf{R}^k_K \coloneqq \biggl\{ 
- \frac{1}{\tilde{\tau}_k} ( \bu_h^k-I^k\bu^{k-1}_h) 
 + \frac{\rho}{\rho_{\mathrm{m}}} \vg +\nabla\cdot(\nu(c_h)\bnabla\bu_h) -\bu_h\cdot\bnabla\bu_h- 	\frac{1}{\rho_{\mathrm{m}}}\nabla p_h \biggr\}  \biggr|_K,\\
& R^k_{1,K} \coloneqq \biggl\{-\frac{s^k_h-s_h^{k-1}}{\tilde{\tau}_k} +\frac{1}{\mathrm{Sc}}\nabla^2 s_h-\bu_h\cdot\nabla s_h \biggr\} \biggr|_K,\quad R^k_{1,e} \coloneqq  \begin{cases}
\frac{1}{2} \djump{(\mathrm{Sc}^{-1}\nabla s_h)\cdot\vn} &\text{for $e\in\mathcal{E}_h\setminus \Gamma$,} \\
0 & \text{for $e\in \Gamma$,} 
\end{cases},\\
& R^k_{2,K}  \coloneqq \biggl\{- \frac{c^k_h-c^{k-1}_h}{\tilde{\tau}_k} +\frac{1}{\tau \mathrm{Sc}}\nabla^2 c_h-(\bu_h-v_p\ve_z)\cdot\nabla c_h \biggr\} \biggr|_K, \quad R^k_{2,e} \coloneqq \begin{cases}
\frac{1}{2} \djump{((\tau \mathrm{Sc})^{-1}\nabla c_h)\cdot\vn} & \text{for $e\in\mathcal{E}_h\setminus \Gamma$,} \\
0 & \text{for $e\in \Gamma$.} 
\end{cases} 
\end{align*}

For each time step $k$, we can split again the $\bH(\div)$-conforming discrete solution $\bu^k_h$ into a conforming part $\bu^k_{hc}$ and a non-conforming part $\bu^k_{hr}$
such that $\bu^k_h=\bu^k_{h,c}+\bu^k_{h,r}$. For each $t\in(t_{k-1},t_{k}]$, we introduce a linear interpolant $\bu_{h}(t)$ in terms of $t$ as
\begin{align*}
\bu_{h}(t):= \frac{t_{k}-t}{\tilde{\tau}_k} I^k\bu^{k-1}_h+\frac{t-t_{k}}{\tilde{\tau}_k}\bu^{k}_h, 
\end{align*} 
where $\{l_k,l_{k+1}\}$ is the standard linear interpolation basis defined on $[t^k,t^{k+1}]$. Similarly, we may introduce $\bu_{h,c}(t)$ and $\bu_{h,r}(t)$. 
Then, setting $\boldsymbol{e}_{\bu_c}=\bu-\bu_{h,c}$, we have 
$\boldsymbol{e}^{\bu}=\bu-\bu_h=\boldsymbol{e}^{\bu_c}-\bu_{h,r}$. 
For $t\in(t_{k-1},t_k)$, we define 
\begin{align*}
\partial_t\bu_{\tilde{\tau}}(t) \coloneqq \frac{1}{\tilde{\tau}_k}(\bu^k-\bu^{k-1}),
\end{align*}
and for all $t\in(t_{k-1},t_k)$, we consider the problem of finding  $(\tilde{\bu}^k, \tilde{p}^k,\tilde{s}^k,\tilde{c}^k)\in \bH^1_0(\Omega)\times L^2_0(\Omega)\times H^1_0(\Omega)\times H^1_0(\Omega)$ such that
\begin{subequations}\label{elliptic_cons11}
\begin{align}
 \bigl(\partial_t\bu_{h}(t),\bv \bigr)+a_1(c_h,\tilde{\bu}^k,\bv)+c_1({\bu}_h;\tilde{\bu}^k,\bv)+b(\bv,\tilde{p}^k)&=(\ff^k,\bv),\quad\forall \bv\in\bH^1_0(\Omega),\\
b(\tilde{\bu}^k,q)&=0,\quad \forall q\in L^2_0(\Omega),\\
\bigl(\partial_t {s}_h(t),\phi \bigr)+a_2(\tilde{s}^k,\phi)+c_2(\bu_h;\tilde{s}^k,\phi)&=0,\quad \forall\phi\in H^1_0(\Omega),\\
(\partial_t{c}_h,\psi)+\frac{1}{\tau}a_2(\tilde{c}^k,\psi)+c_2(\bu_h-v_p\boldsymbol{e}_z;\tilde{c}^k,\psi)&=0\quad \forall\psi\in H^1_0(\Omega).
\end{align}
\end{subequations}

\begin{lemma}\label{lemma1.1}
The following estimates hold
\begin{align}
&\frac{1}{2}\|\boldsymbol{e}^{\bu_c}_{\tilde{{{\tau}}}}(t_n)\|^2_{0,\Omega}+\alpha_1\int_{0}^{t_n}\|\boldsymbol{e}^{\bu_c}_{\tilde{\tau}}(t)\|^2_{1,\Omega}+\frac{1}{2}\|{e}^{s}_{\tilde{\tau}}(t_n)\|^2_{0,\Omega}+{\alpha}_2\int_{0}^{t_n}\|{e}^{s}_{\tilde{\tau}}(t)\|^2_{1,\Omega}+\frac{1}{2}\|{e}^{c}_{\tilde{\tau}}(t_n)\|^2_{0,\Omega}+{{\alpha}_3}{\tau}\int_0^{t_n}\|{e}^{c}_{\tilde{\tau}}(t)\|^2_{1,\Omega}\nonumber\\
&\le C(\Xi^2+\Upsilon^2)+\frac{1}{2}\|\boldsymbol{e}^{\bu_c}_{\tilde{\tau}}(0)\|^2_{0,\Omega}+\frac{1}{2}\|{e}^{s}_{\tilde{\tau}}(0)\|^2_{0,\Omega}+\frac{1}{2}\|{e}^{c}_{\tilde{\tau}}(0)\|^2_{0,\Omega}+\frac{1}{2}\sum_{k=1}^{n-1} \bigl(\|\bu(t_k)-I^{k+1}\tilde{\bu}_{h,c}^k\|^2_{0,\Omega}-\|\boldsymbol{e}^{\bu_c}_{\tilde{\tau}}(t_n)\|^2_{0,\Omega} \bigr),
\nonumber\\ 
&\sum_{k=1}^{n} \bigl(\|\partial_t \boldsymbol{e}^{\bu}_{\tilde{\tau}}+\nabla(p-p_h)\|_{L^2(t_{k-1},t_k;\bH^{-1}(\Omega))}+\|\partial_t e^s_{\tilde{\tau}}\|_{L^2(t_{k-1},t_k;H^{-1}(\Omega))}+\|\partial_t e^c_{\tilde{\tau}}\|_{L^2(t_{k-1},t_k;H^{-1}(\Omega))}\bigr)\nonumber\\
&\le C(\Xi^2+\Upsilon^2)+\frac{1}{2}\|\boldsymbol{e}^{\bu_c}_{\tilde{\tau}}(0)\|^2_{0,\Omega}+\frac{1}{2}\|{e}^{s}_{\tilde{\tau}}(0)\|^2_{0,\Omega}+\frac{1}{2}\|{e}^{c}_{\tilde{\tau}}(0)\|^2_{0,\Omega}+\frac{1}{2}\sum_{k=1}^{n-1} \bigl(\|\bu(t_k)-I^{k+1}\tilde{\bu}_{h,c}^k\|^2_{0,\Omega}-\|\boldsymbol{e}^{\bu_c}_{\tilde{\tau}}(t_n)\|^2_{0,\Omega} \bigr).\nonumber
\end{align}
\end{lemma}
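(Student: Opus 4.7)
The plan is to adapt the semi-discrete argument of Theorem~\ref{semdisrel} to the fully discrete setting, with the piecewise-linear-in-time reconstruction $\bu_h(t)$ playing the role of the continuous-in-time semi-discrete solution and $(\tilde{\bu}^k,\tilde{p}^k,\tilde{s}^k,\tilde{c}^k)$ playing the role of the elliptic projection at each step. Concretely, on each subinterval $(t_{k-1}, t_k]$ I would subtract the elliptic reconstruction problem \eqref{elliptic_cons11} from the continuous weak formulation \eqref{eq:main_var_mod} to get error equations of the form
\begin{align*}
(\partial_t \boldsymbol{e}^{\bu}_{\tilde{\tau}}, \bv)_\Omega + a_1(c;\rho^k_{\bu},\bv) + c_1(\bu;\rho^k_{\bu},\bv) + b(\bv,p-\tilde{p}^k) &= a_1(c_h,\tilde{\bu}^k,\bv) - a_1(c,\tilde{\bu}^k,\bv) - c_1(\boldsymbol{e}^{\bu}_{\tilde{\tau}};\tilde{\bu}^k,\bv) \\
& \quad + (\partial_t \bu_h(t) - \partial_t \bu_{\tilde{\tau}}(t), \bv)_\Omega,
\end{align*}
and analogous identities for the scalar species, where $\rho^k_{\bu}=\bu-\tilde{\bu}^k$ and similarly for $\rho^k_s,\rho^k_c$. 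The last right-hand-side term records the discrepancy between the linear-in-time interpolant used in \eqref{elliptic_cons11} and the piecewise-constant backward-Euler increment, and it is exactly what the indicator $\Xi$ is designed to quantify.

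Next I would test with $\bv=\boldsymbol{e}^{\bu_c}_{\tilde{\tau}}$, $q=p-\tilde{p}^k$, $\phi=e^s_{\tilde{\tau}}$, $\psi=e^c_{\tilde{\tau}}$ and combine the coercivity bounds \eqref{eq:coerciv_a}--\eqref{eq:coerciv_am}, the skew-symmetry \eqref{eq:c_zero}, and the Lipschitz estimate \eqref{eq:a1lipschitz} on $\nu$. The nonconforming velocity component $\bu_{h,r}$ is then controlled through $\Upsilon^2_{J,k}$ exactly as in Lemma~\ref{rel1.1}, and the cross convective terms $c_1(\boldsymbol{e}^{\bu}_{\tilde{\tau}};\tilde{\bu}^k,\bv)$, $c_2(\boldsymbol{e}^{\bu}_{\tilde{\tau}};\tilde{s}^k,\phi)$, $c_2(\boldsymbol{e}^{\bu}_{\tilde{\tau}};\tilde{c}^k,\psi)$ carry the smallness factor $M$ and can be absorbed on the left. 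Each remaining contribution falls into one of three families: a spatial reconstruction defect $\rho^k_{\bu}$, $\rho^k_s$, $\rho^k_c$ bounded by applying Theorem~\ref{starelth11} at time $t_k$ to the exact-versus-discrete pair, producing \emph{both} of the summands $\Upsilon^2_k(\bu_h^k,p_h^k,s_h^k,c_h^k)$ and $\Upsilon^2_k(I^k\bu_h^{k-1},I^k_pp_h^{k-1},s_h^{k-1},c_h^{k-1})$ that appear in \eqref{fullydisest}; a temporal-oscillation contribution whose $L^2$-in-time norm is exactly $\Xi_{k,1}^2+\Xi_{k,2}^2+\Xi_{k,3}^2$ once $I^k\bu_h^{k-1}-\bu_h^{k-1}$ and $\bu_h^k-I^k\bu_h^{k-1}$ are rewritten in terms of their element and edge pieces (the edge-jump terms in $\Xi_{k,1}^2$ reflect the nonconformity introduced when the mesh changes between steps $k-1$ and $k$); and the jump contribution from $\bu_{h,r}$, controlled as in Section~\ref{sec:aposteriori1}.

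Integrating in time from $0$ to $t_n$, telescoping the $\tfrac{1}{2}\|\boldsymbol{e}^{\bu_c}_{\tilde{\tau}}(t_k)\|_{0,\Omega}^2$ endpoint contributions, and applying a Gronwall argument under the smallness hypothesis on $M$ then gives the first estimate. The telescoping is imperfect at the instants where the mesh changes, and the obstruction is precisely the sum $\sum_{k=1}^{n-1}\bigl(\|\bu(t_k)-I^{k+1}\tilde{\bu}_{h,c}^k\|_{0,\Omega}^2 - \|\boldsymbol{e}^{\bu_c}_{\tilde{\tau}}(t_n)\|_{0,\Omega}^2\bigr)$ that therefore has to be carried on the right-hand side. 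Tracking this remeshing defect cleanly, together with the algebraic identity linking $\bu_{h,c}^{k-1}$ to $I^k\bu_{h,c}^{k-1}$ and their respective jumps, is the main technical obstacle; the smallness assumption on $M$ is what guarantees that the Gronwall constants remain independent of $h$ and $\tilde{\tau}_k$.

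For the dual-norm bound I would recast each error equation in the form $(\partial_t \boldsymbol{e}^{\bu}_{\tilde{\tau}} + \nabla(p-p_h), \bv)_\Omega = \text{(everything else)}$ (and similarly for the scalars, without pressure), take the supremum over $\bv\in\bH^1_0(\Omega)$ with $\|\bv\|_{1,\Omega}\le 1$, square, integrate on $(t_{k-1},t_k)$, and sum over $k$. Every resulting term has already been bounded by $\Xi^2+\Upsilon^2$ together with the energy quantities appearing in the first estimate, so once the primal estimate is in place the dual-norm estimate follows from essentially the same bookkeeping, now inheriting the same remeshing remainder as the primal bound.
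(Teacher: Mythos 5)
Your proposal is correct and follows essentially the same route as the paper's proof: subtracting the elliptic reconstruction \eqref{elliptic_cons11} from \eqref{eq:main_var_mod}, testing with $(\boldsymbol{e}^{\bu_c}_{\tilde{\tau}},p-\tilde p^k,e^s_{\tilde\tau},e^c_{\tilde\tau})$, absorbing the convective cross terms via the smallness of $M$, bounding the reconstruction defects by Theorem~\ref{starelth11} and the time-interpolation terms by $\Xi$, carrying the remeshing remainder through the telescoping, and concluding the dual-norm bound by integration by parts and the same bookkeeping. The only cosmetic difference is that the paper absorbs the lower-order terms directly into the coercivity constants $\alpha_1,\alpha_2,\alpha_3$ rather than invoking a Gronwall argument, which changes nothing of substance.
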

\begin{proof}
Combining~\eqref{eq:main_var_mod} and~\eqref{elliptic_cons11}  implies
\begin{subequations}
\begin{align}\label{lemfuldiseq1}
\nonumber \bigl(\partial_t(\boldsymbol{e}_{\tilde{\tau}}^{\bu}),\bv \bigr)+a_1(c,\bu-\tilde{\bu}^k,\bv)+c_1(\bu,\bu-\tilde{\bu}^k,\bv)+b(\bv,p-\tilde{p}^k)\\
+a_1(c-c_h,\tilde{\bu}^k,\bv)+c_1(\bu-\bu_h,\tilde{\bu}^k,\bv)&= 0, \quad \forall \bv\in \bH^1_0(\Omega),\\
b(\bu-\tilde{\bu}^k,q)&=0,\quad \forall q\in L^2_0(\Omega),\\
 \bigl(\partial_t e^s_{\tilde{\tau}}(t),\phi \bigr)-a_2(s-\tilde{s}^k,\phi)+c_2(\bu;s-\tilde{s}^k,\phi)&=c_2(\bu-\bu_h;\tilde{s}^k,\phi),\quad \forall\phi\in H^1_0(\Omega),\\
(\partial_te^c_{\tilde{\tau}},\psi)-\frac{1}{{\tau}}a_2(c-\tilde{c}^k,\psi)+c_2(\bu-v_p\boldsymbol{e}_z;c-\tilde{c}^k,\psi)&=c_2(\bu-\bu_h;\tilde{c}^k,\psi)\quad \forall\psi\in H^1_0(\Omega).
\end{align}\end{subequations}
Moreover, we have
\begin{align*}
(\partial_t\boldsymbol{e}_{{\tilde{\tau}}}^{\bu_c},\bv)+a_1(c,\boldsymbol{e}^{\bu_c}_{{\tilde{\tau}}},\bv)+c_1(\bu,\boldsymbol{e}^{\bu_c}_{{\tilde{\tau}}},\bv)+b(\bv,p-p^k)&=(\partial_t\tilde{\bu}_{h,r},\bv)-a_1(c-c_h,\tilde{\bu}^k,\bv)
-c_1(\bu-\bu_h,{\bu}^k,\bv)\nonumber\\
&\quad -a_1(c,\tilde{\bu}_{h,c}-\tilde{\bu}^k,\bv)-c_1(\bu,\tilde{\bu}_{h,c}-\tilde{\bu}^k,\bv),\\
b(\boldsymbol{e}^{\bu}_{{\tilde{\tau}}},q)&=0,\nonumber\\
\bigl(\partial_t e^s_{\tilde{\tau}}(t),\phi \bigr)-a_2(e^{s}_k,\phi)+c_2(\bu;e^{s}_k,\phi)&=c_2(\bu-\bu_h;\tilde{s}^k,\phi)-a_2(s_h-\tilde{s}^k,\phi)+c_2(\bu;s_h-\tilde{s}^k,\phi),\nonumber\\
(\partial_te^c_{\tilde{\tau}},\psi)-\frac{1}{{\tau}}a_2(e^{c}_k,\psi)+c_2(\bu-v_p\boldsymbol{e}_z;e^{c}_k,\psi)&=c_2(\bu-\bu_h;\tilde{c}^k,\psi)\\
&-\frac{1}{{\tau}}a_2(c_h-\tilde{c}^k,\psi)+c_2(\bu-v_p\boldsymbol{e}_z;c_h-\tilde{c}^k,\psi).\nonumber
\end{align*}
Choosing $\bv=\boldsymbol{e}^{\bu_c}_{{\tilde{\tau}}}$, $q=p-p^k$, $\phi=e^s_{\tilde{\tau}}$, $\psi=e^c_{\tilde{\tau}}$ and then combining the first two equations, we have
\begin{align*}
& (\partial_t\boldsymbol{e}_{{\tilde{\tau}}}^{\bu_c},\boldsymbol{e}^{\bu_c}_{{\tilde{\tau}}})+a_1(c,\boldsymbol{e}^{\bu_c}_{{\tilde{\tau}}},\boldsymbol{e}^{\bu_c}_{{\tilde{\tau}}})+c_1(\bu,\boldsymbol{e}^{\bu_c}_{{\tilde{\tau}}},\boldsymbol{e}^{\bu_c}_{{\tilde{\tau}}})
-(\partial_t\tilde{\bu}_{h,r},\boldsymbol{e}^{\bu_c}_{{\tilde{\tau}}})+a_1(c-c_h,\tilde{\bu},\boldsymbol{e}^{\bu_c}_{{\tilde{\tau}}})\\
&\qquad +c_1(\bu-\bu_h,{\bu},\boldsymbol{e}^{\bu_c}_{{\tilde{\tau}}})
 + a_1(c,\tilde{\bu}_{h,c}-\tilde{\bu}^{k},\boldsymbol{e}^{\bu_c}_{{\tilde{\tau}}})+ c_1(\bu,\tilde{\bu}_{h,c}-\tilde{\bu}^k,\boldsymbol{e}^{\bu_c}_{{\tilde{\tau}}}) = 0,\\
& (\partial_t e^s_{\tilde{\tau}},e^s_{\tilde{\tau}})+a_2(e^{s}_{\tilde{\tau}},e^s_{\tilde{\tau}})+c_2(\bu;e^{s}_{\tilde{\tau}},e^s_{\tilde{\tau}})-c_2(\bu-\bu_h;\tilde{s}^k,e^s_{\tilde{\tau}})+a_2(s_h-\tilde{s}^k,e^s_{\tilde{\tau}})-c_2(\bu;s_h-\tilde{s}^k,e^s_{\tilde{\tau}})=0,\\
&(\partial_te^c_{\tilde{\tau}},e^c_{\tilde{\tau}})+\frac{1}{{\tau}}a_2(e^{c}_{\tilde{\tau}},e^c_{\tilde{\tau}})+c_2(\bu-v_p\boldsymbol{e}_z;e^{c}_{\tilde{\tau}},e^c_{\tilde{\tau}})-c_2(\bu-\bu_h;\tilde{c}^k,e^c_{\tilde{\tau}}) 
+ \frac{1}{{\tau}}a_2(c_h-\tilde{c}^k,e^c_{\tilde{\tau}})\\
&\qquad -c_2(\bu-v_p\boldsymbol{e}_z;c_h-\tilde{c}^k,e^c_{\tilde{\tau}}) = 0.
\end{align*}
These identities  readily allow us to derive the following bounds: 
\begin{align*}
\frac{1}{2}\frac{\mathrm{d}}{\mathrm{d}t}\|\boldsymbol{e}^{\bu_c}_{{\tilde{\tau}}}\|_{0,\Omega}^2+\alpha_a\|\boldsymbol{e}^{\bu_c}_{{\tilde{\tau}}}\|^2_{1,\Omega}&\le \bigl(\|\partial_t\tilde{\bu}_{h,r}\|_{0,\Omega}+M\|c-c_h\|_{1,\Omega}+(1+2M)\|\bu_{hc}-\tilde{\bu}^k\|_{1,\Omega}\bigr)\|\boldsymbol{e}^{\bu_c}_{{\tilde{\tau}}}\|_{1,\Omega}+M\|\boldsymbol{e}^{\bu_c}_{\tilde{\tau}}\|^2_{1,\Omega}, \\
\frac{1}{2}\frac{\mathrm{d}}{\mathrm{d}t}\|{e}^{s}_{{\tilde{\tau}}}\|^2_{0,\Omega}+\hat{\alpha}_a\|{e}^{s}_{{\tilde{\tau}}}\|^2_{1,\Omega}&\le \bigl(M\|\bu-\bu_h\|_{\Thnorm}+(1+M)\|s_{h}-\tilde{s}^k\|_{1,\Omega}\bigr)\|{e}^{s}_{{\tilde{\tau}}}\|_{1,\Omega},\\
\frac{1}{2}\frac{\mathrm{d}}{\mathrm{d}t}\|{e}^{c}_{{\tilde{\tau}}}\|^2_{0,\Omega}+\frac{\hat{\alpha}_a}{{\tau}}\|{e}^{c}_{{\tilde{\tau}}}\|^2_{1,\Omega}&\le \bigl(M\|\bu-\bu_h\|_{\Thnorm}+(1+M)\|c_{h}-\tilde{c}^k\|_{1,\Omega}\bigr)\|{e}^{c}_{{\tilde{\tau}}}\|_{1,\Omega}.
\end{align*}
And owing to Young's inequality, we obtain 
\begin{align}\label{newfuleqn11}
\frac{1}{2}\frac{\mathrm{d}}{\mathrm{d}t}\|\boldsymbol{e}^{\bu_c}_{{\tilde{\tau}}}\|^2_{0,\Omega}&+\alpha_1\|\boldsymbol{e}^{\bu_c}_{{\tilde{\tau}}}\|^2_{1,\Omega}+\frac{1}{2}\frac{\mathrm{d}}{\mathrm{d}t}\|{e}^{s}_{{\tilde{\tau}}}\|^2_{0,\Omega}+{\alpha}_2\|{e}^{s}_{{\tilde{\tau}}}\|^2_{1,\Omega}+\frac{1}{2}\frac{\mathrm{d}}{\mathrm{d}t}\|{e}^{c}_{{\tilde{\tau}}}\|^2_{0,\Omega}+{{\alpha}_3}\|{e}^{c}_{{\tilde{\tau}}}\|^2_{1,\Omega}\nonumber\\
&\le C_1 \bigl(\|\tilde{\bu}_{hc}-\tilde{\bu}^k\|_{\Thnorm}^2+\|s_{h}-\tilde{s}^k\|_{1,\Omega}^2+\|c_{h}-\tilde{c}^k\|_{1,\Omega}^2\bigr)+\|\partial_t\tilde{\bu}_{h,r}\|^2_{0,\Omega}\nonumber\\
&\le 2C_1 \bigl(\|\tilde{\bu}_{hc}-\tilde{\bu}_{{\tilde{\tau}}}\|_{\Thnorm}^2+\|s_{h}-\tilde{s}_{{\tilde{\tau}}}\|_{1,\Omega}^2+\|c_{h}-\tilde{c}_{{\tilde{\tau}}}\|_{1,\Omega}^2\nonumber\\
&\quad+\|\tilde{\bu}_{{\tilde{\tau}}}-\tilde{\bu}^k\|_{\Thnorm}^2+\|\tilde{s}_{{\tilde{\tau}}}-\tilde{s}^k\|_{1,\Omega}^2+\|\tilde{c}_{{\tilde{\tau}}}-\tilde{c}^k\|_{1,\Omega}^2 \bigr)+\|\partial_t\tilde{\bu}_{h,r}\|^2_{0,\Omega},
\end{align}
where $C_1=\max\{\frac{1}{2\alpha_a}(1+2M)^2,\frac{1}{2\hat{\alpha}_a}(1+M)^2, \frac{\tau}{2\hat{\alpha}_a}(1+M)^2\}$, $\alpha_1=\frac{\alpha}{2}-M-2M^2$, $\alpha_2=  \hat{\alpha}_a / 2$ and $\alpha_3=\frac{\hat{\alpha}_a}{2\tau}-\frac{3M^2}{2}$. 
Moreover, we have
\begin{align*}
\frac{1}{2}\|\boldsymbol{e}^{\bu_c}_{{\tilde{\tau}}}(t_n)\|^2_{0,\Omega}&+\alpha_1\int_{0}^{t_n}\|\boldsymbol{e}^{\bu_c}_{{\tilde{\tau}}}(t)\|^2_{1,\Omega}+\frac{1}{2}\|{e}^{s}_{{\tilde{\tau}}}(t_n)\|^2_{0,\Omega}+{\alpha}_2\int_{0}^{t_n}\|{e}^{s}_{{\tilde{\tau}}}(t)\|^2_{1,\Omega}+\frac{1}{2}\|{e}^{c}_{{\tilde{\tau}}}(t_n)\|_{0,\Omega}^2+{{\alpha}_3}\int_0^{t_n}\|{e}^{c}_{{\tilde{\tau}}}(t)\|^2_{1,\Omega} \\
&\le \frac{1}{2}\|\boldsymbol{e}^{\bu_c}_{{\tilde{\tau}}}(0)\|_{0,\Omega}^2+C \left(\sum_{k=1}^{n}\int_{t_{k-1}}^{t_k}(\|(\tilde{\bu}_{{\tilde{\tau}}}-\tilde{\bu}^k)\|^2_{\Thnorm}+\|(\tilde{s}_{{\tilde{\tau}}}-\tilde{s}^k)\|^2_{1,\Omega}+\|(\tilde{c}_{{\tilde{\tau}}}-\tilde{c}^k)\|^2_{1,\Omega} \right) \\
&\quad +\sum_{k=1}^{n}\int_{t_{k-1}}^{t_k} \bigl(\| \tilde{\bu}_{hc}-\tilde{\bu}_{{\tilde{\tau}}} \|^2_{\Thnorm}+\| s_{h}-\tilde{s}^k \|^2_{1,\Omega}+\| c_{h}-\tilde{c}_{{\tilde{\tau}}} \|^2_{1,\Omega} \bigr) \\
&\quad+\frac{1}{2}\sum_{k=1}^{n-1} \bigl(\|\bu(t_k)-I^{k+1}\tilde{\bu}_{h,c}^k\|^2_{0,\Omega}-\|\boldsymbol{e}^{\bu_c}_{{\tilde{\tau}}}(t_n)\|^2_{0,\Omega} \bigr)+\|\partial_t\tilde{\bu}_{h,r}\|^2_{0,\Omega}. 
\end{align*}
In light of  the definition of $\bu_{{\tilde{\tau}}}$, $s_{{\tilde{\tau}}}$ and $c_{{\tilde{\tau}}}$, we get
\begin{align}\label{newfuleq12} \begin{split} 
&\int_{t_{k-1}}^{t_k}(\|\bnabla(\bu_{{\tilde{\tau}}}-\bu^k)\|^2_{0,\mathcal{T}_h}+\|\nabla(s_{{\tilde{\tau}}}-s^k)\|_{0,\Omega}^2+\|\nabla(c_{{\tilde{\tau}}}-c^k)\|^2_{0,\Omega})\\
&\quad \le \tilde{\tau}_k \bigl(\|\bnabla(\bu^{k}-\bu^{k-1})\|^2_{0,\mathcal{T}_h}+\|\nabla(s^{k}-s^{k-1})\|^2_{0,\Omega}+\|\nabla(c^k-c^{k-1})\|^2_{0,\Omega} \bigr). \end{split} 
\end{align}
Then we can apply triangle inequality, which gives
\begin{align}\label{newfuleq13} \begin{split} 
& \tilde{\tau}_k \bigl( \| \bu^{k}-\bu^{k-1} \|^2_{1,\Omega}+\| s^{k}-s^{k-1} \|^2_{1,\Omega}+\| c^k-c^{k-1} \|^2_{1,\Omega} \bigr)\\
& \le \Upsilon_k^2 
    +\tilde{\tau}_k \bigl(\| \bu^{k}-\bu^{k}_h \|^2_{\Thnorm}+\| s^{k}-s^{k}_h \|^2_{1,\Omega}+\| c^k-c^{k}_h \|^2_{1,\Omega}  \bigr) \\
   &\quad +\tilde{\tau}_k \bigl(\| \bu^{k-1}-I_k\bu^{k-1}_h \|^2_{\Thnorm}+\| s^{k-1}-s^{k-1}_h \|^2_{1,\Omega}+\| c^{k-1}-c^{k-1}_h \|^2_{1,\Omega} \bigr).
   \end{split} 
\end{align}
Combining the results with Theorem \ref{starelth11} implies that
\begin{align}\label{newfuleq14}
&\frac{1}{2}\|\boldsymbol{e}^{\bu_c}_{{\tilde{\tau}}}(t_n)\|^2_{0,\Omega}+\alpha_1\int_{0}^{t_n}\|\boldsymbol{e}^{\bu_c}_{{\tilde{\tau}}}(t)\|^2_{1,\Omega}+\frac{1}{2}\|{e}^{s}_{{\tilde{\tau}}}(t_n)\|^2+{\alpha}_2\int_{0}^{t_n}\|{e}^{s}_{{\tilde{\tau}}}(t)\|^2_{1,\Omega}+\frac{1}{2}\|{e}^{c}_{{\tilde{\tau}}}(t_n)\|^2+{{\alpha}_2}\int_0^{t_n}\|{e}^{c}_{{\tilde{\tau}}}(t)\|^2_{1,\Omega}\nonumber\\
&\le C(\Xi^2+\Upsilon^2)+\frac{1}{2}\|\boldsymbol{e}^{\bu_c}_{{\tilde{\tau}}}(0)\|^2_{0,\Omega}+\frac{1}{2}\|{e}^{s}_{\tilde{\tau}}(0)\|^2_{0,\Omega}+\frac{1}{2}\|{e}^{t}_{\tilde{\tau}}(0)\|^2_{0,\Omega}+\frac{1}{2}\sum_{k=1}^{n-1} \bigl(\|\bu(t_k)-I^{k+1}\tilde{\bu}_{h,c}^k\|^2_{0,\Omega}-\|\boldsymbol{e}^{\bu_c}_{{\tilde{\tau}}}(t_n)\|^2_{0,\Omega} \bigr).
\end{align}
Finally, applying integration by parts in \eqref{lemfuldiseq1}  yields 
\begin{align*}
\bigl(\partial_t \boldsymbol{e}_{\tilde{\tau}}^{\bu} +\nabla(p-p_h),\bv\bigr)&=-a_1(c,\bu-\tilde{\bu}^k,\bv)-c_1(\bu,\bu-\tilde{\bu}^k,\bv)-b(\bv,p_h-\tilde{p}^k)\\
&\qquad-a_1(c-c_h,\tilde{\bu}^k,\bv)-c_1(\bu-\bu_h,\tilde{\bu}^k,\bv)\quad \forall \bv\in\bH^1_0(\Omega),\\
 \bigl(\partial_t e^s_{\tilde{\tau}}(t),\phi\bigr)&=a_2(s-\tilde{s}^k,\phi)-c_2(\bu;s-\tilde{s}^k,\phi)+c_2(\bu-\bu_h;\tilde{s}^k,\phi),\quad \forall\phi\in H^1_0(\Omega),\\
(\partial_te^c_{\tilde{\tau}},\psi)&=\frac{1}{{\tau}}a_2(c-\tilde{c}^k,\psi)-c_2(\bu-v_p\boldsymbol{e}_z;c-\tilde{c}^k,\psi)+c_2(\bu-\bu_h;\tilde{c}^k,\psi)\quad \forall\psi\in H^1_0(\Omega).
\end{align*}
Next we apply Young's inequality and the definition of the dual norm. Then, we integrate the whole expression in time between $t_{k-1}$ and $t_{k}$ for each $k=1,2,\ldots,n$ and sum the expression for each $k$. Finally, we
 use \eqref{newfuleqn11}, \eqref{newfuleq12}, \eqref{newfuleq13} and~\eqref{newfuleq14}  to establish the second stated result.
\end{proof}

\begin{theorem}\label{fullydisrel}
 Let $(\bu,p,s,c)$ be the solution of \eqref{eq:main_var_mod}, and   $(\bu_h,p_h,s_h,c_h)$   the corresponding discrete solution. Let $\Xi$, $\Upsilon$ be the {\it a posteriori} error estimators defined in \eqref{fullydisest}. Then 
 the  following reliability estimate holds:
 \begin{align*}
& \bigl(\|\boldsymbol{e}^{\bu}_{\tau}\|_{\star}^2+\|e^c_{\tau}\|^2_{\star}+\|e^s_{\tau}\|_{\star}^2
\bigr)^{1/2}\\
& \qquad \le C \left(\Xi^2+\Upsilon^2+\frac{1}{2}\|\boldsymbol{e}^{\bu_c}_{{\tilde{\tau}}}(0)\|^2_{0,\Omega}+\frac{1}{2}\|{e}^{s}_{\tilde{\tau}}(0)\|^2_{0,\Omega}+\frac{1}{2}\|{e}^{t}_{\tilde{\tau}}(0)\|^2_{0,\Omega}+\sum_{k=1}^{N-1}\|u^{k}_{h,r}-I^{k+1}u^{k}_{h,r}\|_{0,\Omega}^2 \right)^{1/2},\\
&\sum_{k=1}^{N} \bigl(\|\partial_t \boldsymbol{e}^{\bu}_{\tilde{\tau}}+\nabla(p-p_h)\|_{L^2(t_{k-1},t_k;\bH^{-1}(\Omega))}+\|\partial_t e^s_{\tilde{\tau}}\|_{L^2(t_{k-1},t_k;H^{-1}(\Omega))}+\|\partial_t e^c_{\tilde{\tau}}\|_{L^2(t_{k-1},t_k;H^{-1}(\Omega))} \bigr)\nonumber\\
&\qquad\le C \left(\Xi^2+\Upsilon^2+\frac{1}{2}\|\boldsymbol{e}^{\bu_c}_{{\tilde{\tau}}}(0)\|^2_{0,\Omega}+\frac{1}{2}\|{e}^{s}_{\tilde{\tau}}(0)\|^2_{0,\Omega}+\frac{1}{2}\|{e}^{t}_{\tilde{\tau}}(0)\|^2_{0,\Omega}+\sum_{k=1}^{N-1}\|u^{k}_{h,r}-I^{k+1}u^{k}_{h,r}\|_{0,\Omega}^2 \right)^{1/2},
\end{align*}
where we define 
\begin{align*}
\|\bv\|_{\star}^2& \coloneqq 
 \int_{0}^T \|\bv\|_{\Thnorm}^2 \, \mathrm{d}t,\quad  
\|\phi\|_{\star}^2 \coloneqq 
 \int_{0}^T \|\phi\|_{1,\Omega}^2 \, \mathrm{d}t.
\end{align*}
\end{theorem}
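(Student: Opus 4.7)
\medskip
\noindent\textbf{Proof plan.} The proof will follow as a direct consequence of Lemma~\ref{lemma1.1} (taken with $n=N$) together with a careful absorption of the residual mesh-transfer sum
$\sum_{k=1}^{N-1}(\|\bu(t_k)-I^{k+1}\tilde{\bu}_{h,c}^k\|^2_{0,\Omega}-\|\boldsymbol{e}^{\bu_c}_{\tilde{\tau}}(t_n)\|^2_{0,\Omega})$ appearing on the right-hand side of that lemma, into the estimators $\Xi^2 + \Upsilon^2$, the initial data, and the explicit non-conforming residual kept in the statement.

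First, I would drop the positive nodal-in-time left-hand side terms $\tfrac12\|\boldsymbol{e}^{\bu_c}_{\tilde{\tau}}(t_N)\|^2_{0,\Omega}$, $\tfrac12\|e^{s}_{\tilde{\tau}}(t_N)\|^2_{0,\Omega}$, $\tfrac12\|e^{c}_{\tilde{\tau}}(t_N)\|^2_{0,\Omega}$, since the target $\star$-norm in this theorem contains only the time-integrated parts $\int_0^T\|\cdot\|_{\Thnorm}^2\,\mathrm{d}t$ and $\int_0^T\|\cdot\|_{1,\Omega}^2\,\mathrm{d}t$ (unlike Theorem~\ref{semdisrel}, where the $L^\infty(L^2)$-in-time part was included). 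What remains on the left are precisely the coercivity contributions $\alpha_1\int_0^T\|\boldsymbol{e}^{\bu_c}_{\tilde{\tau}}\|^2_{1,\Omega}$, $\alpha_2\int_0^T\|e^{s}_{\tilde{\tau}}\|^2_{1,\Omega}$, $\alpha_3\tau\int_0^T\|e^{c}_{\tilde{\tau}}\|^2_{1,\Omega}$, which are what is needed.

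Next, I would rewrite each summand by inserting $\pm\bu_h^k$ and $\pm I^{k+1}\bu_h^k$ and using the triangle inequality to split
$\bu(t_k)-I^{k+1}\tilde{\bu}_{h,c}^k$ into three pieces: the error $\bu(t_k)-\bu_h^k$, which telescopes against $-\|\boldsymbol{e}^{\bu_c}_{\tilde{\tau}}(t_N)\|^2_{0,\Omega}$ and is then absorbed; the mesh-transfer discrepancy $\bu_h^k-I^{k+1}\bu_h^k$, which is controlled by the jump indicators $h_e\tilde{\tau}_k^{-2}\|\djump{\bu_h^{n}-I^k\bu_h^{n-1}}\|^2_{0,e}$ and $h_e\tilde{\tau}_k^{-2}\|\djump{I^k\bu_h^{n-1}-\bu_h^{n-1}}\|^2_{0,e}$ built into $\Xi_{k,1}^2$ via a discrete trace inequality; and, finally, the piece $I^{k+1}(\bu_h^k-\tilde{\bu}_{h,c}^k)$, which, after using $\bu_h=\bu_{h,c}+\bu_{h,r}$, collapses exactly to the explicit residual $\|\bu_{h,r}^k-I^{k+1}\bu_{h,r}^k\|^2_{0,\Omega}$ retained in the statement. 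To pass from $\boldsymbol{e}^{\bu_c}_{\tilde{\tau}}$ back to the full error $\boldsymbol{e}^{\bu}_{\tilde{\tau}}=\boldsymbol{e}^{\bu_c}_{\tilde{\tau}}-\bu_{h,r}$, a triangle inequality suffices, controlling $\int_0^T\|\bu_{h,r}\|^2_{\Thnorm}\,\mathrm{d}t$ by the jump contributions $\Upsilon_{J,k}^2$ in $\Upsilon^2$, in direct analogy with the bound $\|\bu_h^r\|_{\Thnorm}\le C_r(\sum_{K}\Psi_{J_K}^2)^{1/2}$ used in Section~\ref{sec:aposteriori1}. This establishes the first inequality. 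The dual-norm bound on $\partial_t\boldsymbol{e}^{\bu}_{\tilde{\tau}}+\nabla(p-p_h)$, $\partial_t e^s_{\tilde{\tau}}$ and $\partial_t e^c_{\tilde{\tau}}$ then follows at once from the second estimate of Lemma~\ref{lemma1.1} using the exact same absorption, since its right-hand side is identical to that of the first estimate.

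The main obstacle will be the precise bookkeeping in the second step: because $I^{k+1}$ is not a projection onto a nested space (spatial meshes may change from one step to the next), one has $I^{k+1}\bu_h^k\neq\bu_h^k$ in general, and one must argue that the mismatch on the conforming part $\bu_{h,c}^k-I^{k+1}\bu_{h,c}^k$ is controlled by the intrinsic $\Xi_{k,1}^2$ jump indicators (scaled by $\tilde{\tau}_k^{-2}$ to match the parabolic time weight), while only the non-conforming part produces the explicit $\|\bu_{h,r}^k-I^{k+1}\bu_{h,r}^k\|^2_{0,\Omega}$ contribution in the statement. Handling this split cleanly is what drives the particular form of the estimator $\Xi_k$, and it is the only place where the conforming/non-conforming decomposition interacts in a nontrivial way with the time discretisation.
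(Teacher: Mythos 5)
Your overall architecture is the right one and matches the paper: invoke Lemma~\ref{lemma1.1} with $n=N$, use the splitting $\bu^k_h=\bu^k_{h,c}+\bu^k_{h,r}$, reduce the transfer sum to the non-conforming residual $\|\vu^{k}_{h,r}-I^{k+1}\vu^{k}_{h,r}\|^2_{0,\Omega}$, pass back to $\boldsymbol{e}^{\bu}_{\tilde\tau}$ via the jump contributions, and treat the dual-norm bound by the same absorption applied to the second estimate of the lemma. However, the central mechanism you propose for the sum $\sum_{k}\bigl(\|\bu(t_k)-I^{k+1}\tilde{\bu}_{h,c}^k\|^2_{0,\Omega}-\|\boldsymbol{e}^{\bu_c}_{\tilde\tau}(t_k)\|^2_{0,\Omega}\bigr)$ has a genuine gap. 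You split $\bu(t_k)-I^{k+1}\tilde{\bu}_{h,c}^k$ into three pieces by the triangle inequality and assert that the piece $\bu(t_k)-\bu_h^k$ ``telescopes against'' the subtracted $\|\boldsymbol{e}^{\bu_c}_{\tilde\tau}(t_k)\|^2_{0,\Omega}$. It does not: squaring a triangle inequality with three terms produces a factor $3$ (or at best $1+\epsilon$ at the cost of large constants on the other pieces), so after subtraction you are left with an uncancelled multiple of $\sum_{k=1}^{N-1}\|\boldsymbol{e}^{\bu_c}_{\tilde\tau}(t_k)\|^2_{0,\Omega}$. These nodal quantities carry no factor $\tilde{\tau}_k$, so they can neither be absorbed into the time-integrated left-hand side nor removed by a discrete Gronwall argument with constants independent of $N$.

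The paper avoids this by using the \emph{exact} quadratic expansion from \cite[(5.59)--(5.60)]{georgoulis11}, which in the present setting reads
\begin{align*}
 \|\bu(t_k)-I^{k+1}\tilde{\bu}_{h,c}^k\|^2_{0,\Omega}-\|\boldsymbol{e}^{\bu_c}_{{\tilde{\tau}}}(t_k)\|^2_{0,\Omega}=\norm{\vu^{k}_{h,r}-I^{k+1}\vu^{k}_{h,r}}_{0,\Omega}^2+\langle \vu^{k}_{h,r}-I^{k+1}\vu^{k}_{h,r},\boldsymbol{e}_{\tilde{\tau}}^{\bu_c}(t_{k})\rangle,
\end{align*}
i.e.\ writing $\|a+b\|^2-\|a\|^2=\|b\|^2+2\langle a,b\rangle$ so that $\|\boldsymbol{e}^{\bu_c}_{\tilde\tau}(t_k)\|^2_{0,\Omega}$ cancels identically, leaving only the explicit residual plus a cross term that is handled by Young's inequality with a small parameter and absorbed using the nodal terms retained on the left of Lemma~\ref{lemma1.1}, exactly as in the strategy of Theorem~\ref{semdisrel}. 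A secondary bookkeeping slip: your third piece $I^{k+1}(\bu_h^k-\tilde{\bu}_{h,c}^k)=I^{k+1}\bu_{h,r}^k$ is not the quantity $\bu_{h,r}^k-I^{k+1}\bu_{h,r}^k$ appearing in the statement; the correct combination only emerges from the identity above, not from your three-way split. Replacing the triangle-inequality step by this polarization identity would repair the argument and bring it in line with the paper's proof.
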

\begin{proof}
Using $\bu^k_h=\bu^k_{h,c}+\bu^k_{h,r}$ together with the identity in \cite[(5.59)-(5.60)]{georgoulis11} that in our context reads
\begin{align*}
 \|\bu(t_k)-I^{k+1}\tilde{\bu}_{h,c}^k\|^2_{0,\Omega}-\|\boldsymbol{e}^{\bu_c}_{{\tilde{\tau}}}(t_k)\|^2_{0,\Omega}=\norm{\vu^{k}_{h,r}-I^{k+1}\vu^{k}_{h,r}}_{0,\Omega}^2+\langle \vu^{k}_{h,r}-I^{k+1}\vu^{k}_{h,r},e_{\tilde{\tau}}^{\bu_c}(t_{k})\rangle,
\end{align*}
we can invoke Lemma \ref{lemma1.1} and reuse the strategy applied in  Theorem \ref{semdisrel} to complete the proof. 
\end{proof}

\section{Numerical tests}\label{sec:numer}
We now present computational examples illustrating the properties of the numerical schemes. All numerical routines have been realised using the open-source finite element libraries FEniCS \cite{alnaes} and FreeFem++ \cite{freefem}. 

\subsection{Example 1: accuracy verification against smooth solutions}

\begin{table}[t]
	\setlength{\tabcolsep}{4pt}
	\renewcommand{\arraystretch}{1.3}
	\centering 
	{\small\begin{tabular}{|r|ccccccccc|}
		\hline 
		1/h & $\texttt{e}_{\vu}$ & \texttt{rate} & $\texttt{e}_{p}$ & \texttt{rate} & $\texttt{e}_{s}$ & \texttt{rate} & $\texttt{e}_{c}$ & \texttt{rate} & $\norm{\sdiv\vu_h}_{\infty,\Omega}$ \tabularnewline
		\hline
		\hline
		$\sqrt{2}$ &  1.3970 &        -- & 5.0910 &        -- & 0.03723 &        -- & 0.02511 &        -- & 2.19e-11\tabularnewline
		2$\sqrt{2}$ & 0.5651 &    1.306 & 1.9920 &    1.354 & 0.01098 &    1.762 & 0.00679 &    1.887 & 4.08e-12\tabularnewline
		4$\sqrt{2}$ & 0.1719 &    1.717 & 0.6402 &    1.637 & 0.00298 &    1.882 & 0.00171 &     1.990 & 1.00e-12\tabularnewline
		8$\sqrt{2}$ & 0.0456 &    1.914 & 0.1695 &    1.917 & 0.00080 &    1.904 & 0.00046 &    1.903 & 5.20e-13\tabularnewline
		16$\sqrt{2}$ & 0.0115 &    1.994 & 0.0412 &    2.039 & 0.00021 &    1.941 & 0.00012 &    1.962 & 2.23e-13 \tabularnewline
		\hline 
	\end{tabular} }
	\medskip\caption{Example 1. Experimental errors and convergence rates for the approximate solutions 
		$\vu_h$, $p_h$, $s_h$ and $c_h$. The $\ell^{\infty}$-norm of the vector formed 	by the divergence of the discrete velocity computed at time $\Tend$ for each discretisation is shown in the last column.} \label{tab:ex1_err}
\end{table}

A known analytical solution example is used to verify theoretical convergence rates of the scheme. We choose  
$\Tend = 2.0$ and  $\Omega = (0,1)^2$. We take the parameter values $\nu = 1.0$, $\rho = 1.0$, $\rho_{\mathrm{m}} = 1.5$, $\vg = (0,-1)^T$, $\Sc =  1.0$, $\tau  = 0.5$, $v_p = 1.0$, $a_0 = 50$. Following 
the approach of manufactured solutions, we prescribe boundary data and additional external 
forces and adequate source terms so that the closed-form solutions to \eqref{eq:model}  are 
given by the  smooth functions
\begin{gather*}
\vu(x,y,t) = \left(\begin{array}{c}
(\sin(\pi x))^2 (\sin(\pi y))^2 \cos(\pi y) \sin(t)\\
-1/3 \sin(2 \pi x) (\sin(\pi y))^3 \sin(t)
\end{array}\right), \quad 
p(x,y,t) = (x^4-y^4) \sin(t), \\
c(x,y,t) = \frac{1}{2}(1+\cos(\pi/4 (xy))) \exp(-t), \quad s(x,y,t) = \frac{1}{2}(1+\sin(\pi/2 (xy))) \exp(-t).
\end{gather*}
As $\vu$ is prescribed everywhere on $\partial\Omega$, for sake of uniqueness we impose $p \in L_{0}^2(\Omega)$ through a real Lagrange multiplier approach. To verify the {\it a priori} error estimates, we introduce the  
discrete norms
\begin{align*}
\tnorm{\vu}_{0,\Th} \coloneqq  \left(\Dt \sum_{n=1}^N \norm{\vu_h^n}_{1,\mathcal{T}_h}^2\right)^{1 / 2}, \quad \text{and} \quad 
\tnorm{\chi}_{0,k} \coloneqq  \left(\Dt \sum_{n=1}^N \norm{\chi_h^n}_{k,\Omega}^2\right)^{1/ 2}. 
\end{align*}
The corresponding individual errors and convergence rates are computed as 
\begin{gather}
\texttt{e}_{\vu} = \tnorm{\vu - \vu_h}_{0,\Th}, \quad \texttt{e}_{p} = \tnorm{p - p_h}_{0,0}, \quad 
\texttt{e}_{s} = \tnorm{s - s_h}_{0,1}, \quad  \texttt{e}_{c} = \tnorm{c - c_h}_{0,1}, \nonumber\\ 
\texttt{rate} =\log(e_{(\cdot)}/\tilde{e}_{(\cdot)})[\log(\xi/\tilde{\xi})]^{-1}, \xi = \{h,\Dt\}, \label{eq:error01}
\end{gather}
where $e,\tilde{e}$ denote errors generated on two
consecutive pairs of mesh size and time step ~$(h,\Dt)$, and~$(\tilde{h},\tilde{\Dt})$, respectively.
Choosing $\Dt = \sqrt{2} h$ and using scheme \eqref{eq:full_disc_scheme},  the results in Table \ref{tab:ex1_err}  confirm that the rates of convergence are optimal, coinciding with the theoretical bounds anticipated in Theorem~\ref{thm:errors_ucs}. 

\subsection{Example 2: adaptive mesh refinement for the stationary problem}
The classical D\"orfler strategy \cite{dorfler94} is employed for the adaptive algorithm 
based on the steps of solving, estimating, marking, and refining. Estimation is performed 
by computing the error indicators and using them to select/mark elements that 
contribute the most to the error \cite{larson08}. 
The marking is done following the bulk criterion of selecting sufficiently many 
elements so that they represent a given fraction of the total estimated 
error. That is, one refines all elements $K\in\Th$ for which  
\begin{equation*}
\Psi_K \geq \gamma_{\text{ratio}} \max_{L\in\Th} \Psi_L,
\end{equation*}
where $0<\gamma_{\text{ratio}}<1$ is a user-defined constant (that we tune 
in order to generate a similar number of 
degrees of freedom, or comparable errors, as those obtained under uniform refinement).  
And then the algorithm  aims for 
equidistribution of the local error indicator in the updated mesh. 

In the adaptive case, instead of \eqref{eq:error01} the convergence rates (for the spatial errors) are computed as 
$$
\texttt{rate} = -2\log(e_{(\cdot)}/\tilde{e}_{(\cdot)})[\log({\tt DoF}/\widetilde{{\tt DoF}})]^{-1}, 
$$
where {\tt DoF} and $\widetilde{{\tt DoF}}$ are the number of degrees of freedom associated with each refinement level. The robustness 
of the global estimators is measured using the effectivity index (ratio between the total error and the indicator) 
$$ {\tt eff}(\Psi) = \frac{\bigl\{ {\tt e}_{\vu}^2+{\tt e}_{p}^2+{\tt e}_{s}^2 +{\tt e}_{c}^2 \bigr\}^{1/2}}{\Psi}.
$$ 

\begin{figure}[!t]
\begin{center}
\includegraphics[width=0.2425\textwidth]{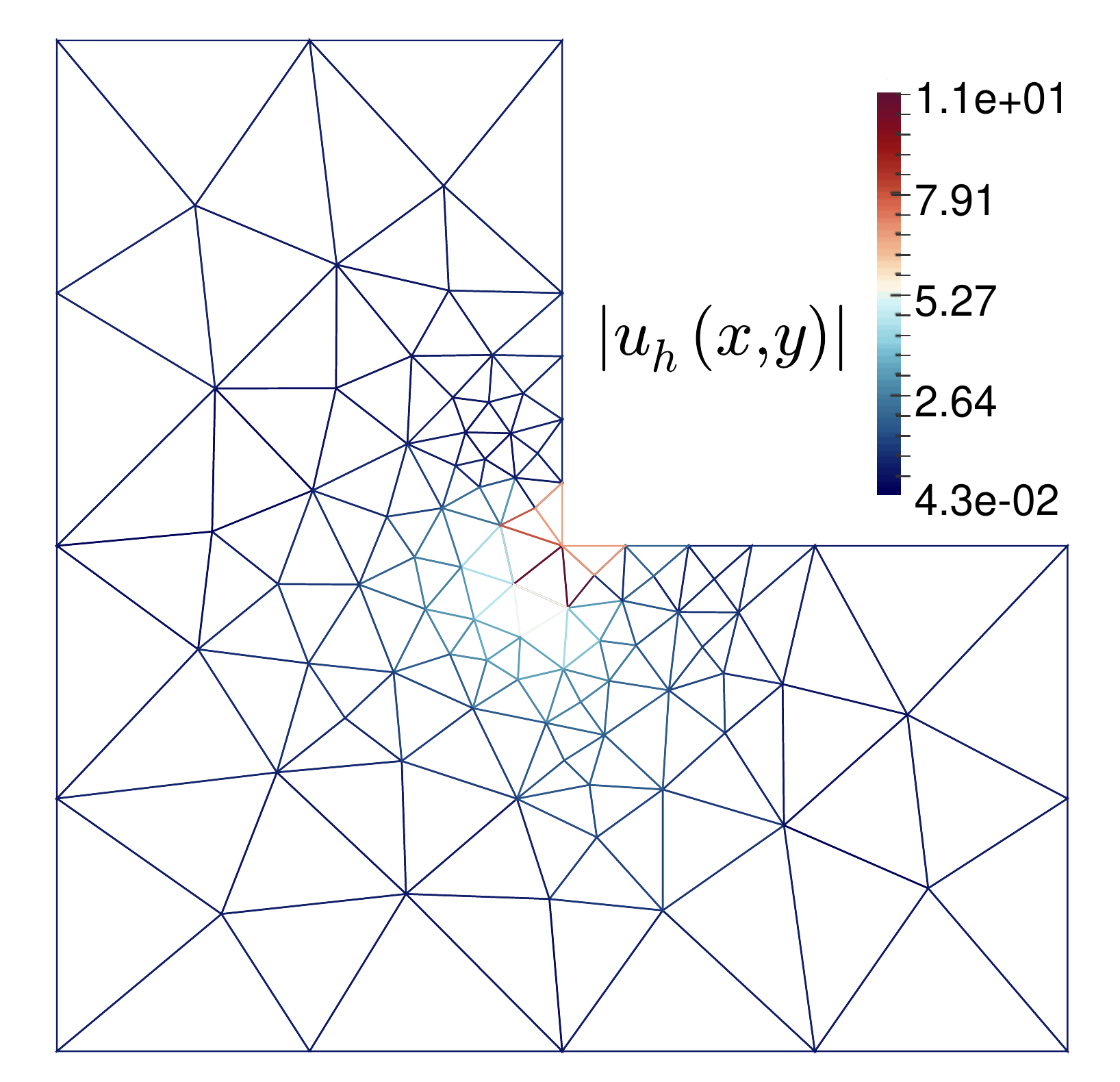}
\includegraphics[width=0.2425\textwidth]{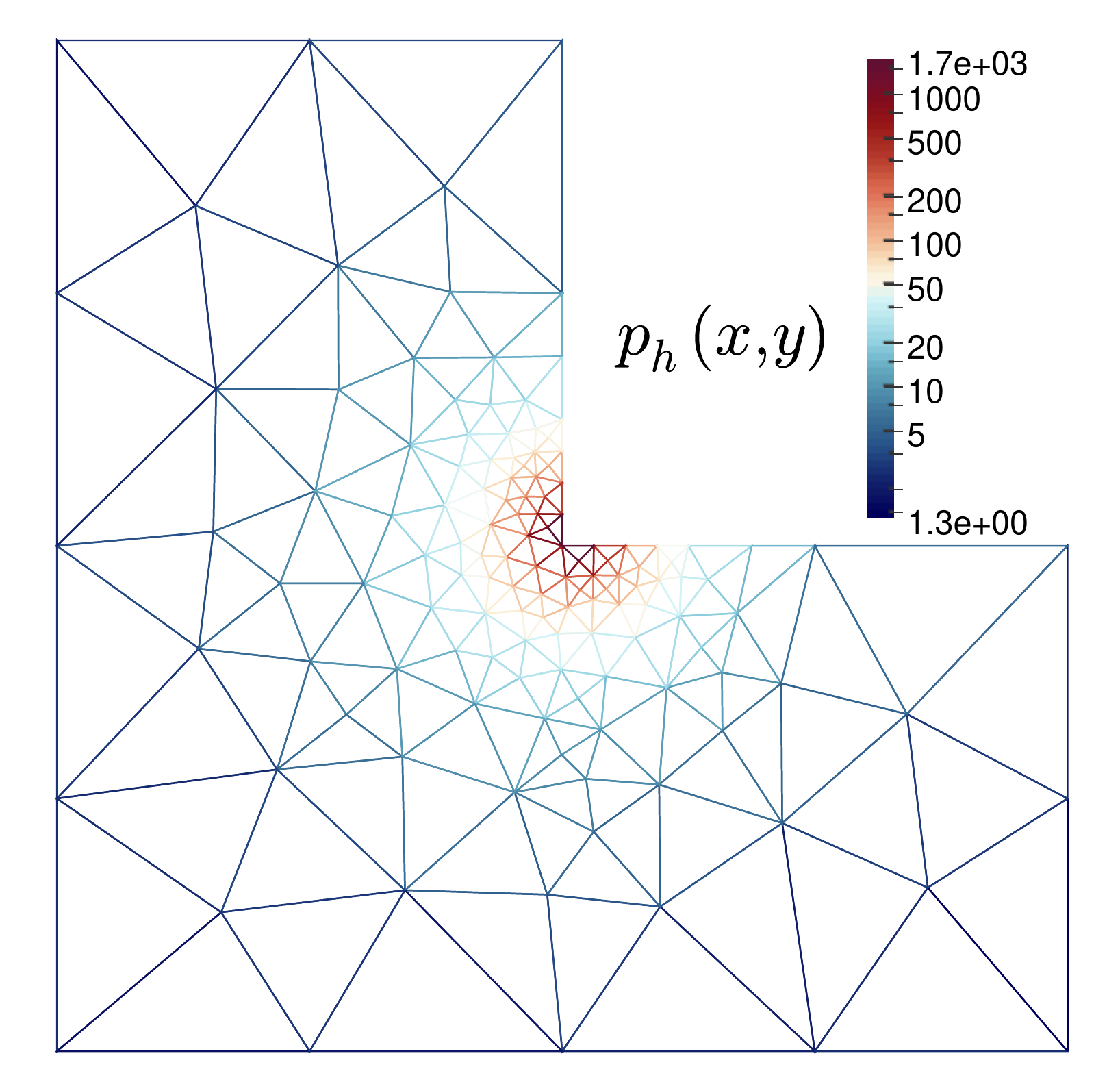}
\includegraphics[width=0.2425\textwidth]{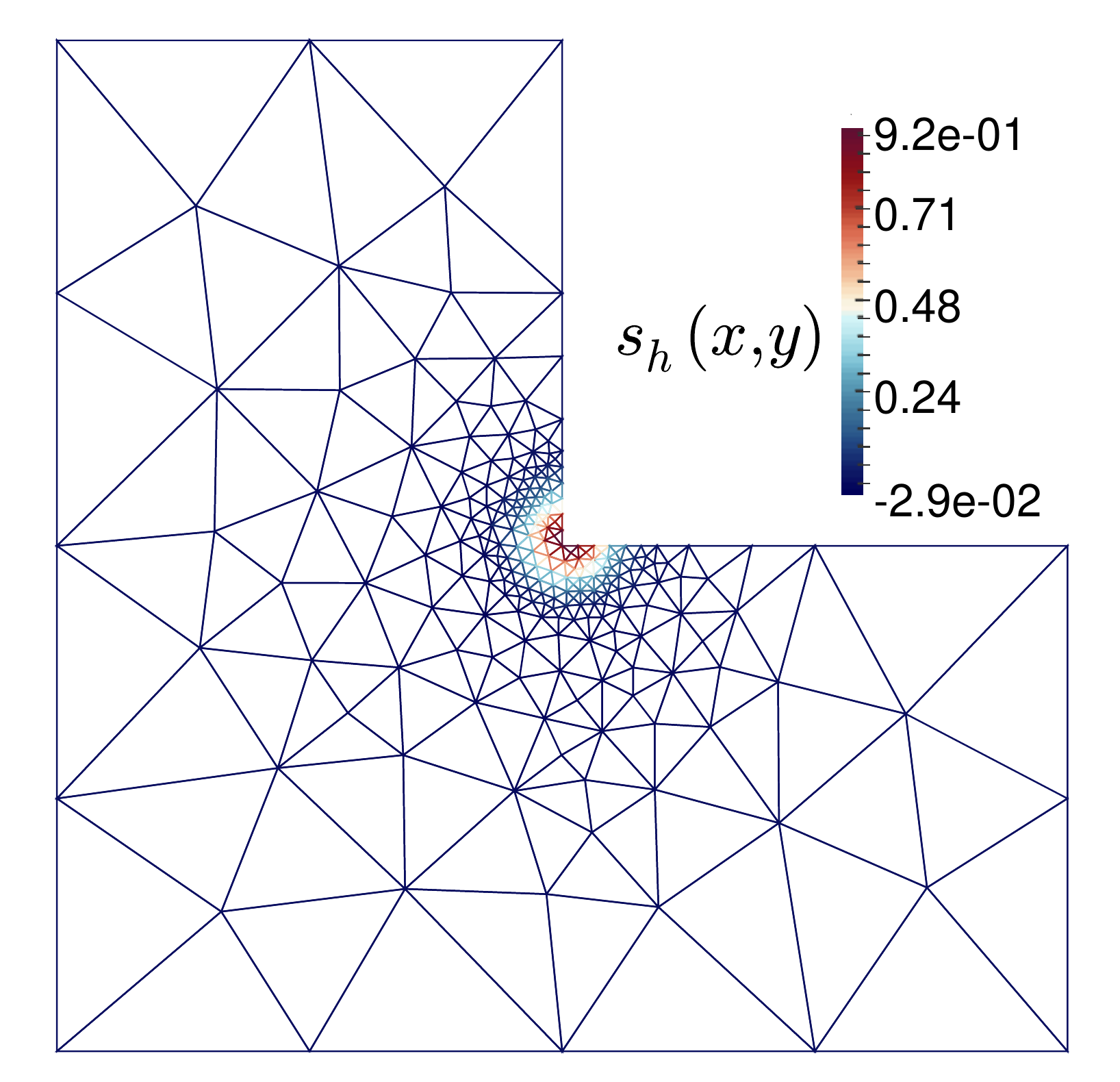}
\includegraphics[width=0.2425\textwidth]{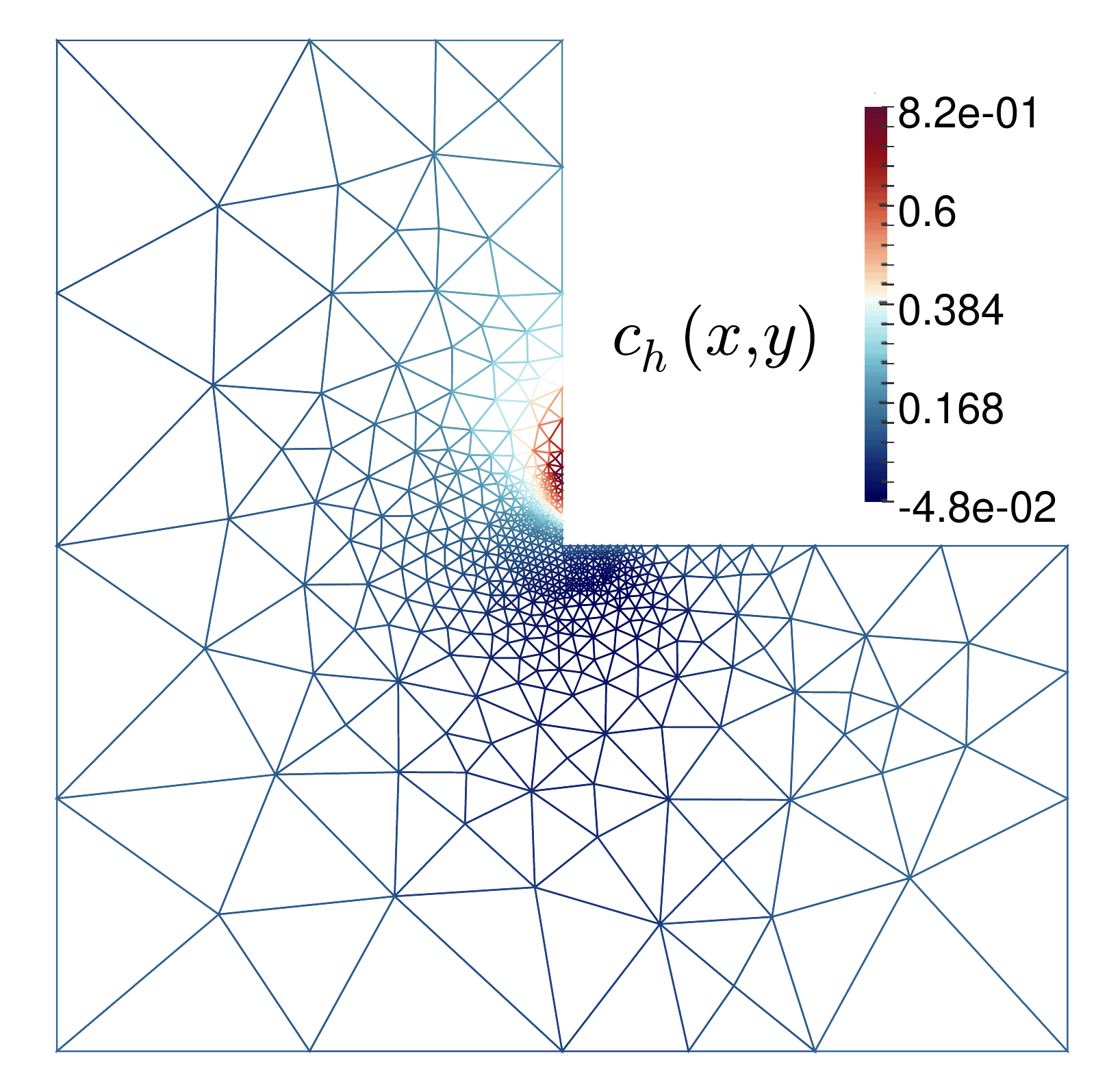}
\end{center}

\vspace{-4mm}
\caption{Example 2. Approximate velocity magnitude (after 3 refinement steps), pressure (after 4 refinement steps), concentration $s$ (after 5 refinement steps), and distribution of $c$ after 6 steps of adaptive refinement.}\label{fig:ex02}  
\end{figure}

We start with verifying the robustness of the {\it a posteriori} error estimator $\Psi$, and construct closed-form solutions to the 
stationary counterpart of the coupled problem \eqref{eq:model}. We consider concentration-dependent 
viscosity, model parameter values, and stabilisation constant as 
\begin{gather*}
\nu(c) = \frac{1}{10}(1+ \exp(-1/4\,c)), \quad \rho = 1, \quad \rho_{\mathrm{m}} = 1.5, \quad \vg = (0,-1)^T, \quad 
\Sc =  1, \\ 
\tau  = 0.5, \quad v_p = 1, \quad \alpha = 0.5, \quad \beta = 0.5, \quad a_0 = 5.\end{gather*}
The considered exact solutions are defined on the L-shaped domain  $\Omega = (-1,1)^2\setminus(0,1)^2$ 
\begin{gather*}
\vu(x,y) = \begin{pmatrix}
\cos(\pi x)\sin(\pi y)\\
-\sin(\pi x)\cos(\pi y)\end{pmatrix}, \quad 
p(x,y) = \frac{2+\sin(xy)}{(x-0.02)^2+(y-0.02)^2}, \\
s(x,y) = \exp(-150(x-0.01)^2-150(y-0.01)^2), \quad 
c(x,y) = \frac{1}{10}+\frac{\cos(\pi x)\sin(\frac{\pi}{2}y)}{25((x-0.1)^2+(y-0.1)^2)}.
\end{gather*}
These solutions exhibit a generic singularity towards the reentrant corner, and therefore one expects that the error decay is suboptimal when applying uniform mesh refinement. After solving the coupled stationary problem on sequences of uniformly and adaptively refined meshes, the aforementioned behaviour is indeed observed in Table~\ref{tab:ex02}, where the first part of the table shows deterioration of the convergence due to the high gradients of the exact solutions on the non-convex domain. The results shown in the bottom block of the table confirm that as more degrees of freedom are added, a restored error reduction rate is observed due to adaptive mesh refinement guided by the {\it a posteriori} error estimator $\Psi$. The second-last column of the table also indicates that the effectivity index oscillates under uniform refinement, while it is much more steady in the adaptive case. We tabulate as well the Newton-Raphson iteration count (needed to reach the relative residual tolerance of 1e-6), and this number is also systematically smaller for the adaptive case (about four steps in all instances) than for the uniform refinement case (up to seven nonlinear steps for certain refinement levels). 
As an example we plot in Figure~\ref{fig:ex02} solutions on relative coarse meshes and display meshes generated with the adaptive algorithm, indicating significant 
refinement near the reentrant corner. 
Let us also remark that the boundary conditions for velocity have been imposed (here and in all other tests) essentially for the normal component, while the tangent component is fixed through a Nitsche's penalisation. For this example we use a constant $a_{\text{Nitsche}} =10^3$. 

\begin{table}[!t]
	\setlength{\tabcolsep}{3pt}
	\centering 
	{\small\begin{tabular}{|rr|ccccccccccc|}
		\hline 
		{\tt DoF} & $h$ & $\texttt{e}_{\vu}$ & \texttt{rate} & $\texttt{e}_{p}$ & \texttt{rate} & $\texttt{e}_{s}$ & \texttt{rate} & $\texttt{e}_{c}$ & \texttt{rate} & $\norm{\sdiv\vu_h}_{\infty,\Omega}$ & $\texttt{eff}(\Psi)$ & \texttt{iter} \tabularnewline
		\hline
\multicolumn{13}{|c|}{Error history under uniform mesh refinement}\tabularnewline
		\hline
  79 &        1 & 26.85 &      -- & 284.0 &      --  & 1.829 &      -- & 1.3724 &    --   & 2.22e-12 & 0.329 & 5\tabularnewline
  275 &      0.5 & 19.57 &  0.396& 152.9 &    0.893 & 1.736 &   0.075 & 0.9903 &  0.499 & 3.65e-13 & 0.257 & 7\tabularnewline
 1027 &     0.25 & 29.59 &  -0.622& 86.54 &    0.821 & 1.188 &   0.547 & 0.7004 &   0.441 & 1.07e-13 & 0.111 & 6\tabularnewline
 3971 &    0.125 & 12.53 &  0.924 & 72.47 &    0.256 & 0.642 &   0.886 & 0.4834 &   0.535 & 3.21e-14 & 0.104 & 7\tabularnewline
15619 &   0.0625 & 7.561 &  0.805 & 53.41 &    0.440 & 0.457 &   0.491 & 0.2863 &   0.756 & 3.39e-14 & 0.171 & 7\tabularnewline
\hline
\multicolumn{13}{|c|}{Error history under adaptive mesh refinement}\tabularnewline
\hline
   79 &        1 & 26.85 &     -- & 284.0 &   --  & 2.829 &     -- & 1.3724 &      -- & 2.22e-12 & 0.329 &4\tabularnewline
  275 &      0.5 & 15.92 &  0.824 & 154.8 & 0.973 & 1.725 &  0.937 & 0.9617 &   0.573 & 3.67e-13 & 0.261 &3\tabularnewline
  943 &      0.5 & 10.78 &  1.172 & 90.23 & 0.878 & 0.822 &  0.863 & 0.6793 &   1.064 & 2.18e-13 & 0.260 &4\tabularnewline
 1601 &      0.5 & 7.398 &  2.499 & 74.35 & 0.732 & 0.641 &  1.428 & 0.4398 &   1.642 & 2.28e-13 & 0.261 &4\tabularnewline
 2363 &      0.5 & 2.139 &  2.265 & 53.35 & 1.706 & 0.461 &  1.569 & 0.2683 &   2.539 & 2.15e-13 & 0.257 &3\tabularnewline
 4253 &   0.2877 & 3.420 &  1.394 & 29.41 & 2.027 & 0.235 &  2.295 & 0.1541 &   1.888 & 1.05e-13 & 0.258 &4\tabularnewline
11662 &     0.25 & 1.012 &  1.267 & 17.58 & 1.019 & 0.118 &  1.368 & 0.0873 &   1.126 & 1.07e-13 & 0.258 &5\tabularnewline
38174 &   0.1416 & 0.557 &  1.006 & 9.388 & 1.058 & 0.059 &  1.156 & 0.0464 &   1.063 & 9.01e-13 & 0.261 &4\tabularnewline
		\hline 
	\end{tabular}} 
	
	\medskip\caption{Example 2. Experimental errors and convergence rates for the approximate solutions 
		$\vu_h$, $p_h$, $s_h$ and $c_h$ of the stationary problem under uniform and adaptive 
		mesh refinement following the estimator $\Psi$. For the adaptive case we employ $\gamma_{\mathrm{ratio}} = 1\cdot10^{-4}$. } \label{tab:ex02}
\end{table}

\subsection{Example 3: robustness of the estimator for the transient problem}
Next we turn to the numerical verification of robustness of the {\it a posteriori} error estimator for the fully discrete approximations of the 
time-dependent coupled problem. We consider now the time interval $(0,0.01]$  and choose $\Dt =0.002$. The closed-form solutions on the unit square domain are as follows 
\begin{gather*}
\vu(x,y,t) = \sin(t)\begin{pmatrix}
\cos(\pi x)\sin(\pi y)\\
-\sin(\pi x)\cos(\pi y)\end{pmatrix}, \quad 
p(x,y,t) = \cos(t)(x^4-y^4), \\
c(x,y,t) = \frac{1}{2}(1+\cos(\pi/4 (xy))) \exp(-t), \quad s(x,y,t) = \frac{1}{2}(1+\sin(\pi/2 (xy))) \exp(-t).
\end{gather*}
Cumulative errors up to $t_{\text{final}}$ are computed as 
	\begin{gather*}
	E_{\vu} \coloneqq   \left(\Dt \sum_{n=1}^N \| \vu_h^n - \vu(t^n)\|_{1,\mathcal{T}_h}^2\right)^{1/2}, \quad E_{p} 
	 \coloneqq   \left(\Dt \sum_{n=1}^N \| p_h^n - p(t^n)\|_{0,\Omega}^2\right)^{1/2}, \\
	 E_{s} \coloneqq  \left(\Dt \sum_{n=1}^N \| s_h^n - s(t^n)\|_{1,\Omega}^2\right)^{1 / 2}, \quad E_{c} 
	  \coloneqq \left(\Dt \sum_{n=1}^N \| c_h^n - c(t^n)\|_{1,\Omega}^2\right)^{1/2},\end{gather*}
and the resulting error history, after six steps of uniform mesh refinement, is collected in Table~\ref{tab:ex03}. To be consistent with the development in Section~\ref{sec:aposteriori3}, the numerical verification in this set of tests was carried out using a backward Euler time discretisation. The {\it a posteriori} error estimator \eqref{fullydisest} is computed and the effectivity index is also tabulated, showing that the estimator is robust (and confirming the theoretical reliability bound as well as giving an heuristic indication of its efficiency). Note that in this case, 
since the mesh refinement is uniform, the auxiliary interpolation of the 
solutions at the last time step on the current mesh is not necessary. The average number of 
Newton-Raphson iterations required for convergence was 3.2.

\begin{table}[!t]
	\setlength{\tabcolsep}{4pt}
	\centering 
	{\small\begin{tabular}{|rc|ccccccccc|}
		\hline 
		{\tt DoF} & $h$ & $E_{\vu}$ & \texttt{rate} & $E_{p}$ & \texttt{rate} & $E_{s}$ & \texttt{rate} & $E_{c}$ & \texttt{rate} & $\texttt{eff}(\Upsilon)$  \tabularnewline
		\hline
 59 &   0.7071 & 0.00228 &      -- & 0.02080 &    -- & 0.01811 &   -- & 0.00711 &   -- & 0.0859\tabularnewline
  195 &   0.3536 & 0.00081 &    1.504 & 0.01191 &   0.843 & 0.00989 &   0.872 & 0.00303 &    1.227  & 0.0970\tabularnewline
  707 &   0.1768 & 0.00034 &    1.217 & 0.00621 &   0.940 & 0.00486 &    1.025 & 0.00149 &    1.022  & 0.0973\tabularnewline
 2691 &  0.0884 & 0.00015 &    1.119 & 0.00313 &   0.984 & 0.00241 &    1.009 & 0.00074 &    1.008  & 0.0967\tabularnewline
10499 &  0.0442 & 7.70e-05 &    1.053 & 0.00157 &   0.996 & 0.00120 &    1.004 & 0.00031 &    1.003  & 0.0961\tabularnewline
41475 &   0.0221 & 3.78e-05 &    1.025 & 0.00078 &    0.999 & 0.00060 &    1.002 & 0.00018 &    1.001  & 0.0972\tabularnewline
164867 &  0.0110 & 1.87e-05 &    1.012 & 0.00039 &   0.999 & 0.00030 &    1.001 & 9.26e-05 &        1.000 & 0.0966\tabularnewline
\hline 
	\end{tabular}}
	
	\medskip\caption{Example 3. Experimental errors and convergence rates for the approximate solutions 
		of the transient problem under uniform  
		mesh refinement following the estimator $\Upsilon$. } \label{tab:ex03}
\end{table}

\subsection{Example 4: simulation of salinity-driven flow instabilities}
To illustrate the behaviour of the model and the proposed method, 
 we simulate a salinity-driven flow problem. Similar examples  are found in   \cite{burns15} where direct numerical simulations (DNS) are applied to the version of 
  \eqref{eq:model}  that has constant viscosity. 

    \begin{figure}[!t]
    	\begin{center}
    		\includegraphics[width=0.223\textwidth]{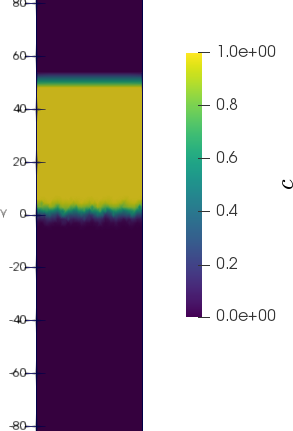}\includegraphics[width=0.223\textwidth]{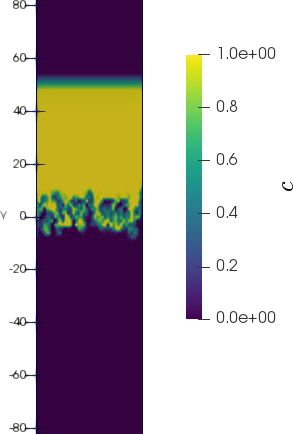}\includegraphics[width=0.223\textwidth]{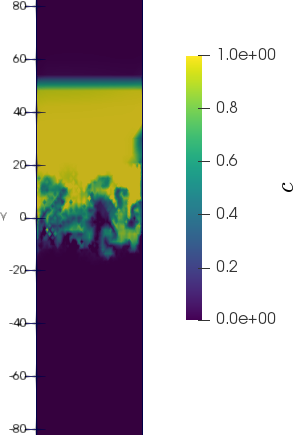}
    	\end{center}
	
\vspace{-4mm}
    	\caption{Example 4. Solid particle concentration profile at times $t=0.0, 3.5$ and $t=6.0$.}\label{fig:ex04_s0}  
    \end{figure}
    
 The configuration of layering in sedimentation is taken from   \cite{reali17}. We consider a rectangular domain with $L_x = 40$, $L_y=300$ and an initial solid-particle concentration profile that is periodic in the horizontal direction, and periodic with Gaussian noise in the vertical direction:
    \begin{align*}
    s(x,y,z,0) = A_0 \exp  (- z^2 / \sigma^2 ) + A_1 \sin(x),
    \end{align*}   
   with initial amplitudes $A_0, A_1$ and width $\sigma$ (see Figure~\ref{fig:ex04_s0}). For the velocity field, we use non-slip boundary condition in all four walls and we choose $\Dt = 0.1$. As discussed in \cite{reali17}, simulations at low density ratios are extremely costly because of the large Reynolds numbers of fingering convection. In consequence we choose an initial density ratio $R_0 = \alpha s_{0z}/\beta c_{0,z} \approx 4$, and we carry out the simulations on tall, thin domains.
   Apart from the specifications above, the remaining constant parameters needed in the model take  the following values 
   \begin{gather*}
   A_0 = 2.86, \quad A_1 = 0.5, \quad \sigma = 0.35, \quad 
   \nu = \num{1e-3}\, \si{kg/m^3}, \quad g = 9.8 \si{m/s^2}, \quad \Sc = 7.0, \\
   \tau = 25, \quad v_\mathrm{p} = 0.04 \si{m/s}, \quad \alpha = -2.0, \quad \beta = 0.5.
   \end{gather*}
According to \cite{reali17} a linear fingering instability occurs provided $1 < R_0 < \tau$, hence the 
instability shown in Figure~\ref{fig:ex04_s0} is expected.

\subsection{Example 5: adaptive simulation of exothermic flows}
To conclude this section, and to include an illustrative simulation exemplifying that the 
$\vH$(div)-conforming scheme along with the {\it a posteriori} error estimator perform 
well for an applicative problem, we address the computation of 
 exothermic flows that develop fingering instabilities. 
The problem configuration is adapted as a simplification of the problem solved in 
 \cite{lenarda17}, where the 
fields $c,s$ represent solutal concentration and temperature, respectively.  
The model assumes an additional drag term due to porosity so that the momentum 
equation is of Navier-Stokes-Brinkman type. The domain is the rectangular region $\Omega=(0,L) \times (0,H)$, and the initial solutal and temperature profiles 
are imposed as 
\begin{equation*}
  c^0(x,y)=\begin{cases}
    0.999+ 0.001\zeta_c \quad & \text{if $H-\epsilon \leq y \leq H$,}  \\
    0 & \text{otherwise,}
  \end{cases} \quad s^0(x,y)=\begin{cases}
    0.999+ 0.001\zeta_s \quad & \text{if $H-\epsilon \leq y \leq H$,}  \\
    0 & \text{otherwise}
  \end{cases},
\end{equation*}
where $\zeta_c, \zeta_s$ are  random fields uniformly distributed on $[0,1]$.
The geometric and model constants are $H=1000$, $L=2000$, $\Delta t=20$, $t_{\mathrm{end}}=1500$, $\nu=1 + 0.25\zeta_\nu$, $\kappa=1$, $1/\text{Sc} = 8$, $1/(\tau\text{Sc}) = 2.5$, $\rho_\mathrm{m} = 1$, $v_\mathrm{p} = 0$, 
$\alpha=5$, $\beta = -1 $. 

Boundary conditions are of mixed type for solutal and temperature. Both fields are prescribed to 0 and 1 on the bottom and top of the domain, respectively; while on the vertical walls we impose zero-flux boundary conditions. The velocity is of slip type on the whole boundary, and therefore a zero-mean condition for the pressure is considered using a real Lagrange multiplier. 
The solution algorithm, differently from the previous tests, is based on an inner fixed-point iteration between an Oseen and a transport system, rather than an exact Newton-Raphson method. An initial coarse mesh of 5300 elements is constructed, and 
an adaptive mesh refinement (only one iteration) guided by the estimator \eqref{fullydisest} 
is applied at the end of each time step. Figure~\ref{fig:ex05} shows snapshots of adapted meshes at different times, and also samples of solute concentration, temperature distribution, velocity, and pressure at the final time.

 \begin{figure}[!t]
    	\begin{center}
    \includegraphics[width=0.24\textwidth]{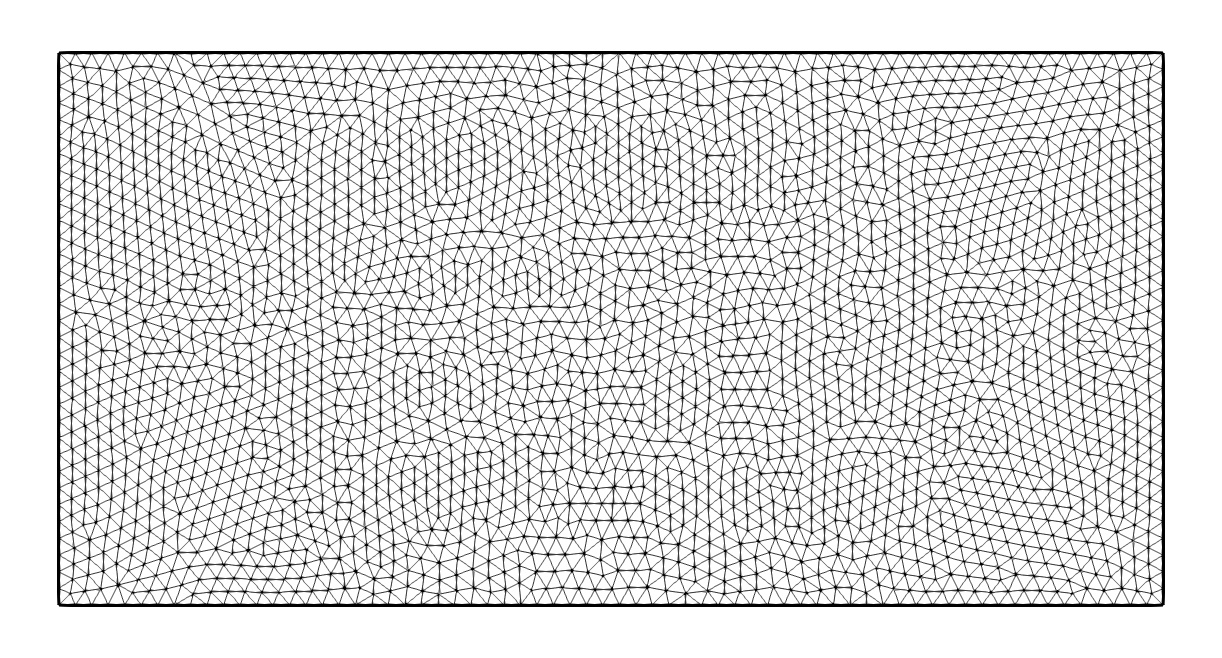}
    \includegraphics[width=0.24\textwidth]{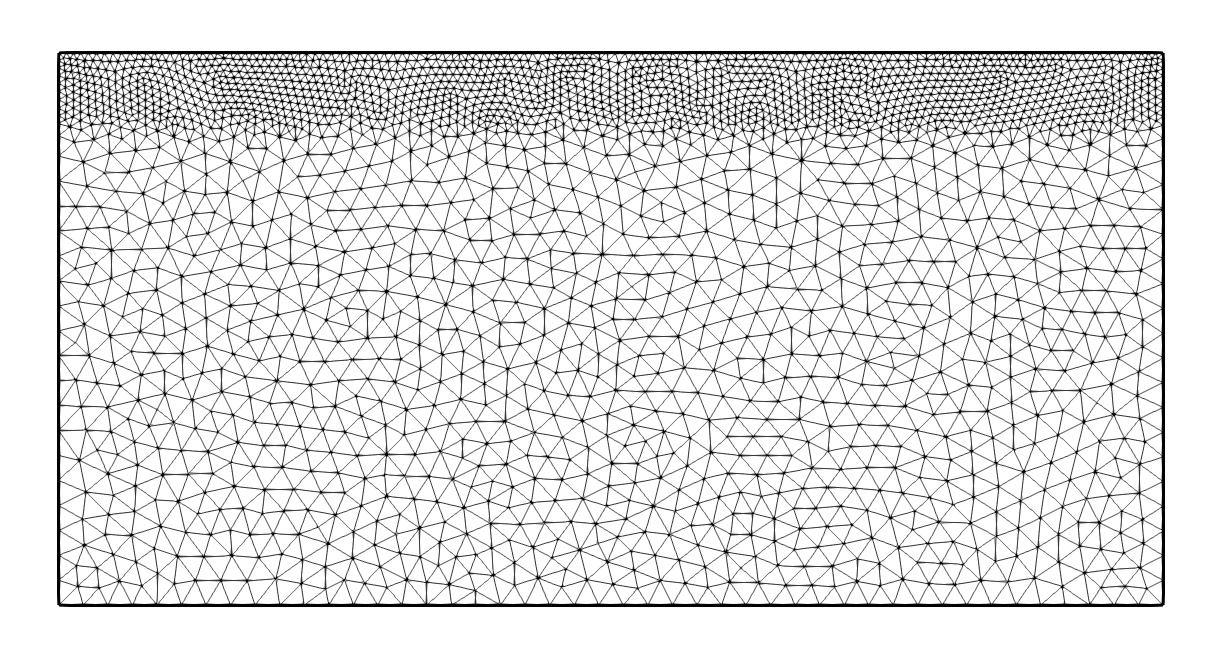}
    \includegraphics[width=0.24\textwidth]{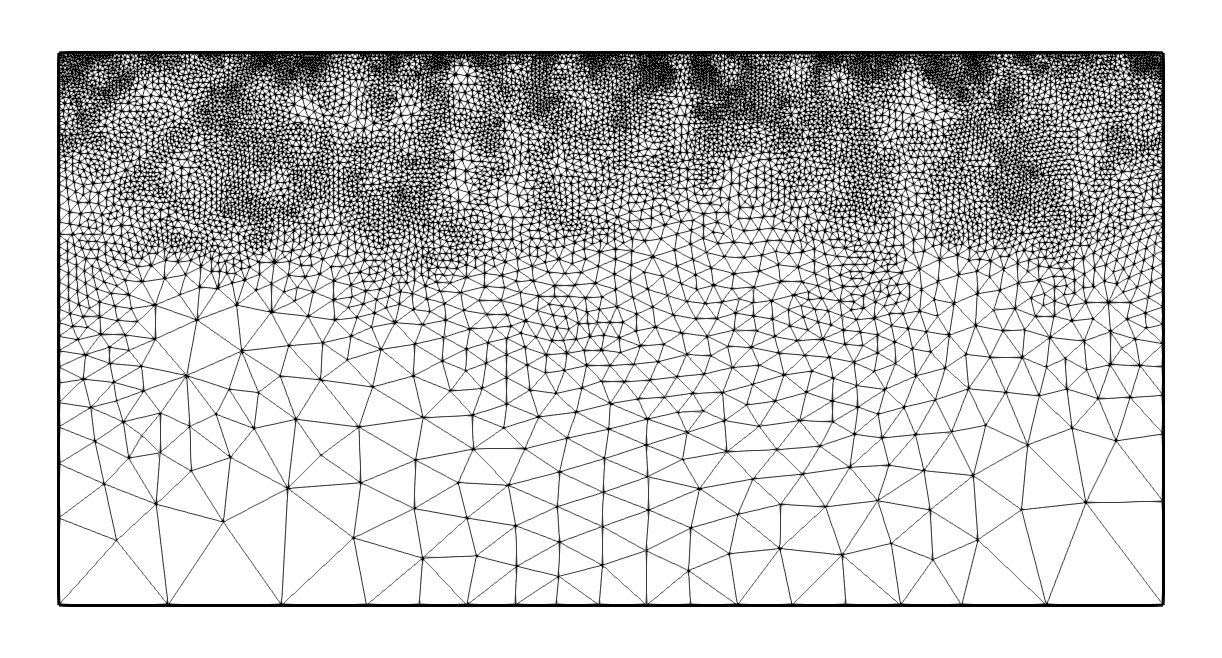}
    \includegraphics[width=0.24\textwidth]{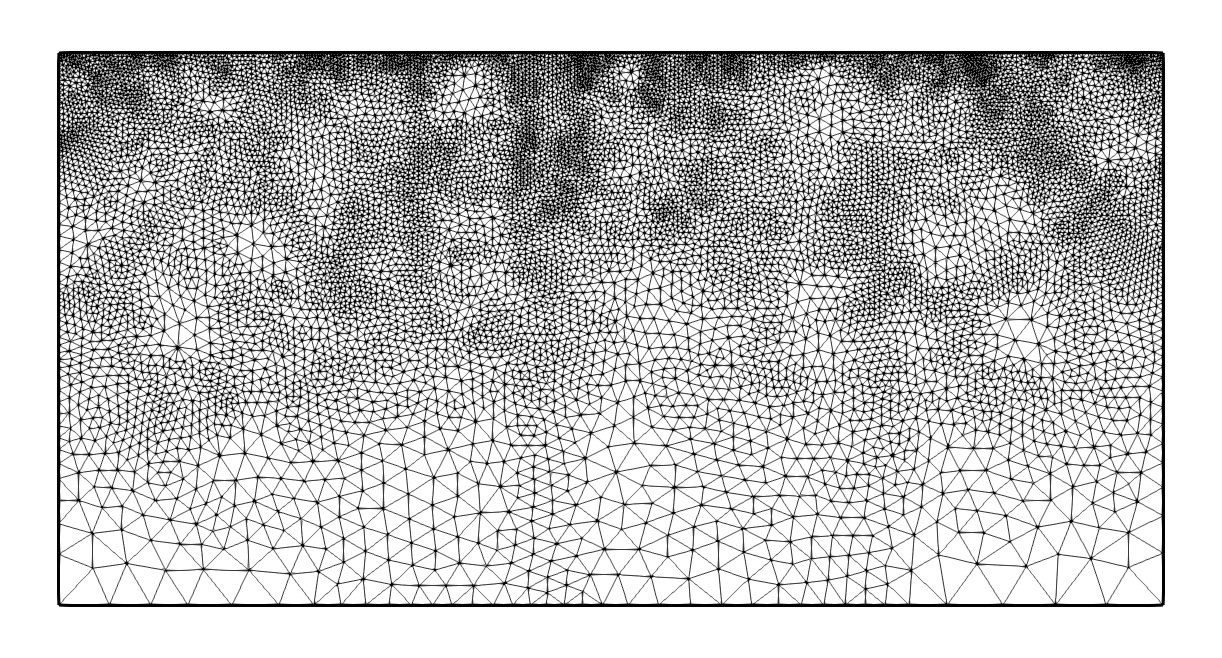}\\
    \includegraphics[width=0.24\textwidth]{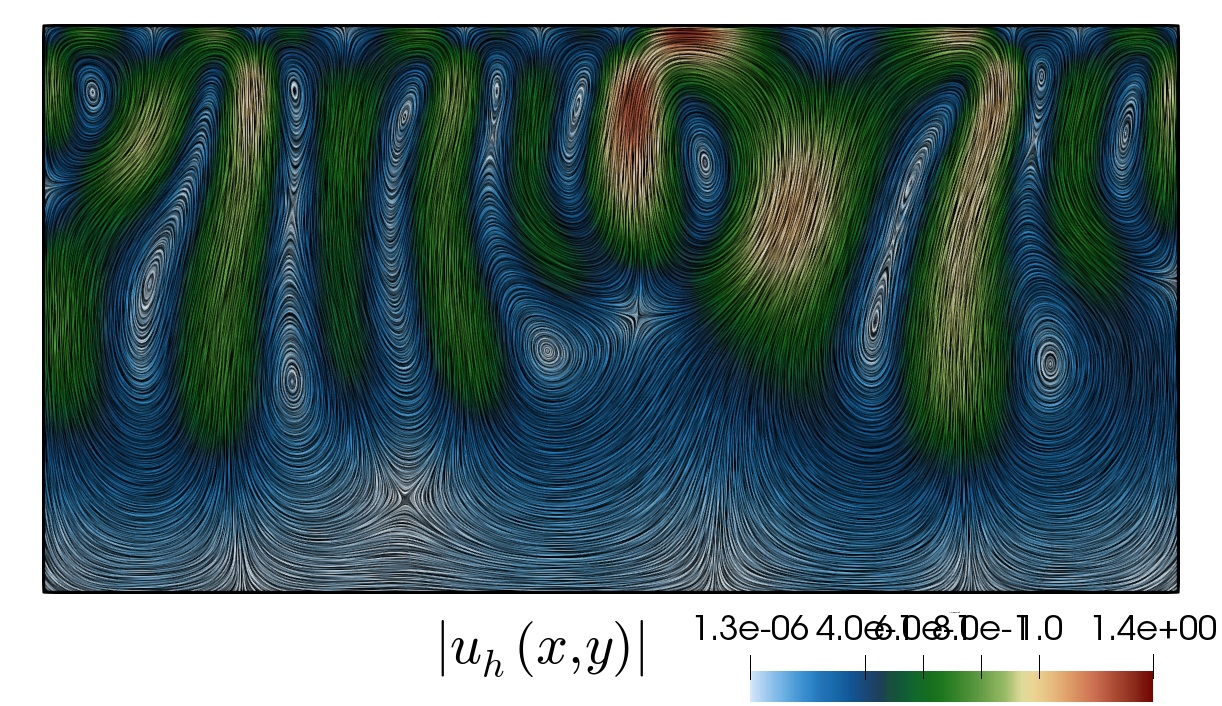}
     \includegraphics[width=0.24\textwidth]{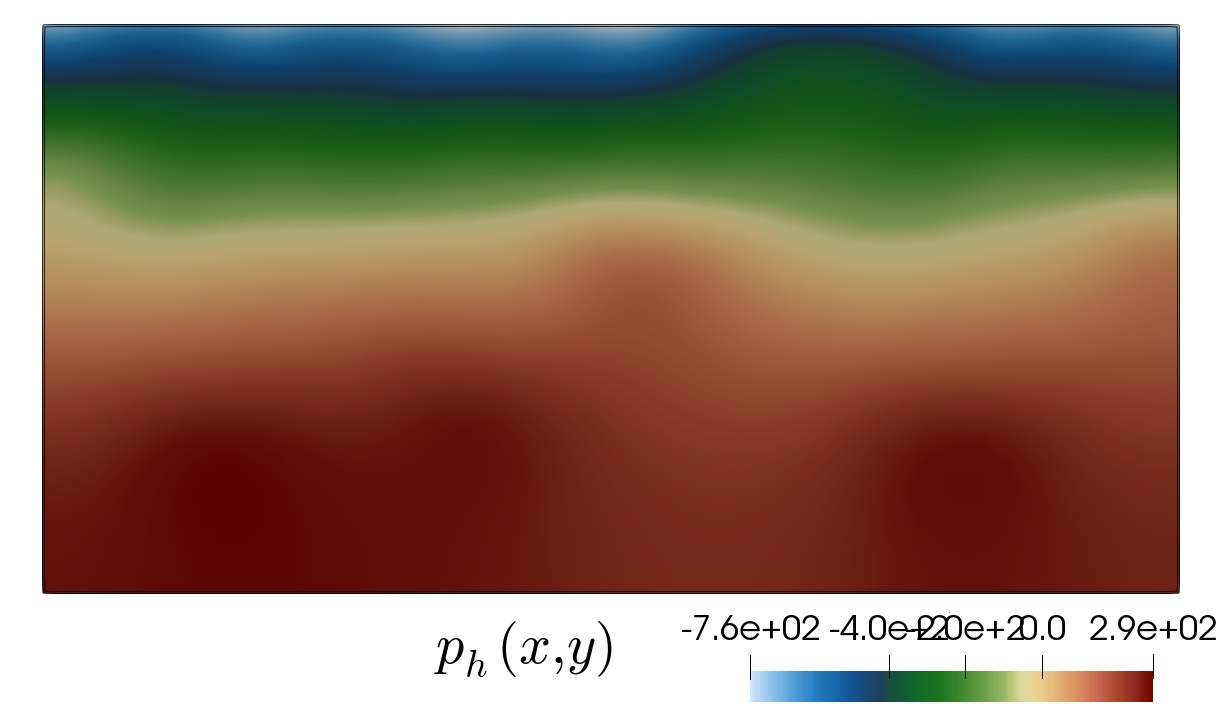}
      \includegraphics[width=0.24\textwidth]{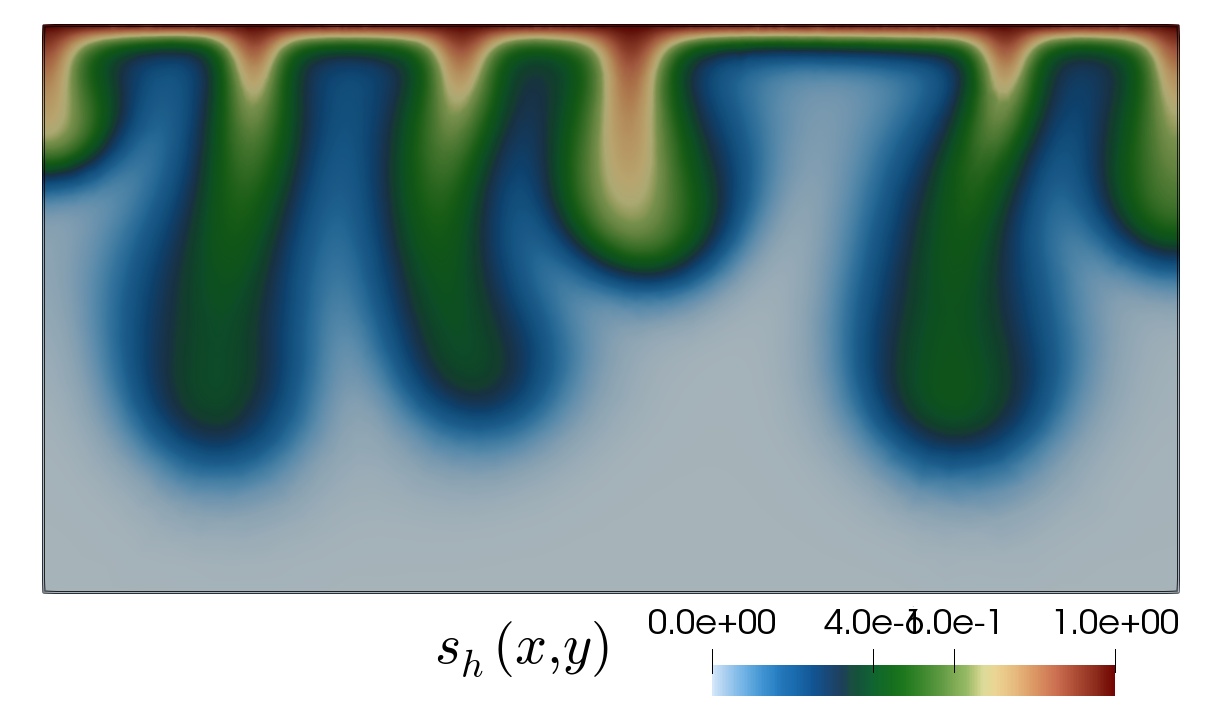}
       \includegraphics[width=0.24\textwidth]{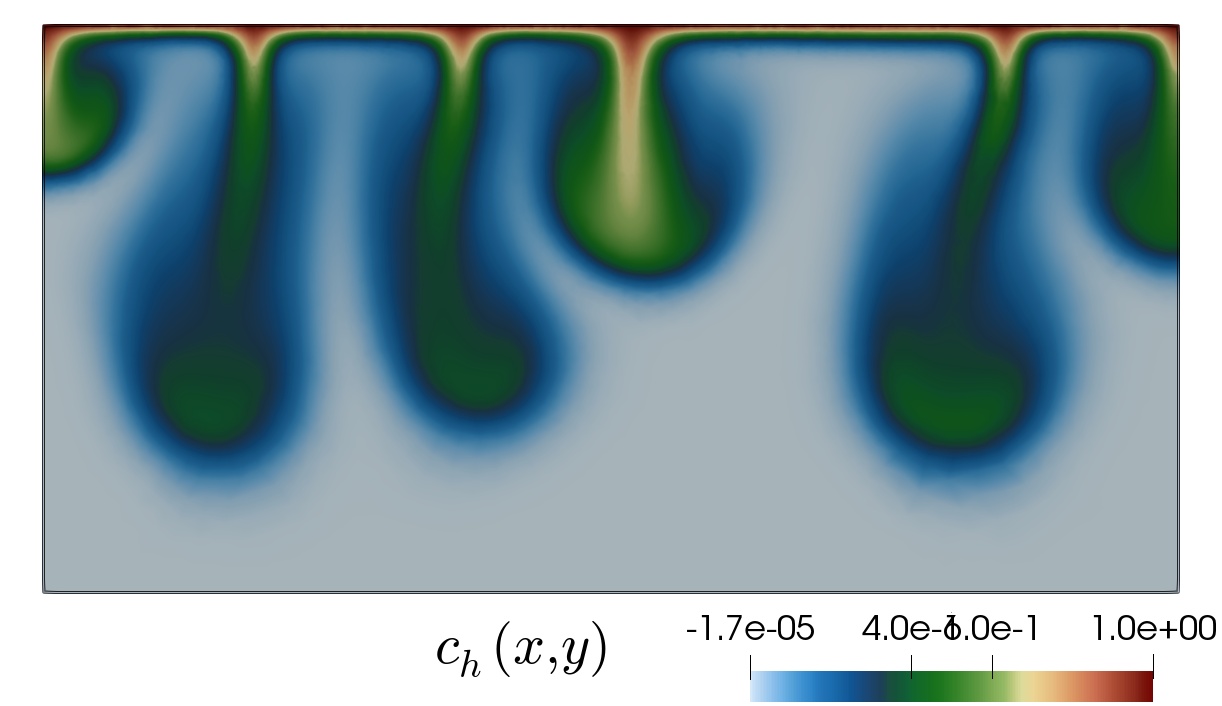}	
		    	\end{center}
\vspace{-3mm}
    	\caption{Example 5. Samples of adapted meshes at times $t=0,60,600,1200$ (top panels), 
	and approximate solutions shown at time $t=1500$ (bottom).}\label{fig:ex05}  
    \end{figure}


\end{document}